\newcommand{\La}{{\operatorname{L}}}
\newcommand{\tr}{{\operatorname{tr}}}
\newcommand{\Tr}{{\operatorname{Tr}}}
\newcommand{\lc}{{\operatorname{LC}}}
\newcommand{\LL}{{\mathbb L}}
\newcommand{\M}{{\mathcal M}}
\newcommand{\E}{{\mathcal E}}
\newcommand{\Lc}{{\mathcal L}}
\newcommand{\ZZ}{{\mathbb Z}}
\newcommand{\TT}{{\mathbb T}}
\newcommand{\QQ}{{\mathbb Q}}
\newcommand{\RR}{\mathbb R}
\DeclareMathOperator{\limi}{{lim}}
\newcommand{\plim}[1]{\,\underset{#1}{\underset{\leftarrow}{\limi}}\,}
\newcommand{\wt}{\widetilde}
\newcommand{\GShv}{\operatorname{{\it G}-Shv}}
\newcommand{\LLc}{\operatorname{{\mathbb {LC}}}}
\newcommand{\Cyl}{\operatorname{Cyl}}
\newcommand{\enumera}{\begin{enumerate}}
\newcommand{\eenumera}{\end{enumerate}}
\newcommand{\C}{{\mathcal C}}
\newcommand{\Q}{{\mathcal Q}}
\DeclareMathOperator{\Hom}{{Hom}}
\DeclareMathOperator{\Id}{{Id}}
\newcommand{\ilim}[1]{\,\underset{#1}{\underset{\to}{\limi}}\,}
\newcommand{\alineas}[1]{\begin{array}{#1}}
\newcommand{\alinea}{\begin{array}{l}}
\newcommand{\ealinea}{\end{array}}
\newcommand{\ealineas}{\end{array}}
\newcommand{\co}{\underline{\operatorname{cos}}}
\newcommand{\pun}{{\scriptscriptstyle \bullet}}
\DeclareMathOperator{\HHom}{\underline{Hom}}
\newcommand{\Qcoh}{\operatorname{Qcoh}}
\newcommand{\shf}{\underline{\operatorname{shf}}}
\newcommand{\Coqc}{\operatorname{Coqc}}
\newcommand{\Abe}{\operatorname{Ab}}
\newcommand{\Mod}{\operatorname{Mod}}
\newcommand{\Shv}{\operatorname{Shv}}
\newcommand{\Coshv}{\operatorname{Coshv}}
\newcommand{\LConst}{\operatorname{LC}}
\DeclareMathOperator{\Aut}{{Aut}}
\DeclareMathOperator{\Tor}{{Tor}}
\DeclareMathOperator{\Ext}{{Ext}}
\DeclareMathOperator{\Qc}{{Qc}}
\DeclareMathOperator{\id}{{id}}
\newtheorem{lemma}{Lemma}[section]
\newtheorem{thm}[lemma]{Theorem}
\newtheorem{prop}[lemma]{Proposition}
\newtheorem{coro}[lemma]{Corollary}
\theoremstyle{definition}
\newtheorem{defn}[lemma]{Definition}
\newtheorem{rem}[lemma]{Remark}
\newtheorem{ejems}[lemma]{Examples}
\begin{document}

\title{Homology of sheaves  via Brown representability}

\author{Fernando Sancho de Salas}

\address{ Departamento de
Matem\'aticas and Instituto Universitario de F\'isica Fundamental y Matem\'aticas (IUFFyM)\newline
Universidad de Salamanca\newline  Plaza de la Merced 1-4\\
37008 Salamanca\newline  Spain}
\email{fsancho@usal.es}

\subjclass[2020]{54B40, 55Nxx, 18G80}

\keywords{sheaves, homology, Brown representability}

\thanks {The  author was supported by research project MTM2017-86042-P (MEC)}
 
\begin{abstract} We give an elementary construction of homology of sheaves from Brown representability for the dual and see how its main properties are derived easily from the construction. Comparison with Poincar\'e-Verdier duality and with homology of groups are also developed.
\end{abstract}



\maketitle

\section*{Introduction}

On an arbitrary topological space, cohomology is defined for any sheaf and studied within the framework of the theory of derived functors, whereas homology is usually defined only for constant or locally constant coefficients and thus does not fit within such a framework. However, on finite topological spaces one may in fact define the homology and cohomology of any sheaf and both of these constructions are developed within the framework of the theory of derived functors. In other words, on a finite topological space it is possible to consider the homology groups $H_i(X, F)$ with coefficients on any sheaf $F$, and in this case $H_i(X,\quad)$ is the $i$-th (left-)derived functor of $H_0(X,\quad)$. This was already pointed out by Deheuvels (\cite{Deheuvels}) and a systematic treatment may be found in \cite{Sanchoetal} (see also \cite{Curry}). Of course, on a finite topological space (more generally, on an Alexandroff space), the category $\Shv(X)$ of sheaves of abelian groups on $X$ has enough projectives and this fails to be true for general (and most common) topological spaces.

Another context where one has an homology theory for sheaves is on locally compact and Haussdorff spaces, where one has Borel-Moore homology, developed in \cite{Bredon}; here we mean Borel-Moore homology with compact supports, which is the one with the  expected properties of an homology theory (as singular homology). As it is mentioned in op. cit., Bore-Moore homology is, more than a homology theory, a co-cohomology theory, and its construction needs a detour through the theory of cosheaves.

Another direction in homology theory is to construct homology of cosheaves, instead of sheaves. The reason of this is that one expects homology to be covariant with respect to open inclusions, i.e., an open inclusion $V\subset U$ must induce a morphism $H_i(V,F)\to H_i(U,F)$, hence cosheaves seem better adapted for this purpose. The problem here is that the category of cosheaves is not as good as that of sheaves, a main problem being the existence of a cosheafification procedure of a precosheaf (the analog of the sheafification of a presheaf) (see \cite{Prasolov}, \cite{Prasolov2}).

The first aim of this paper is  to give an elementary construction of an homology theory of sheaves, under mild local hypothesis on the topological space, by using Brown representability for the dual, and which agrees with that of finite spaces and that of locally compact Haussdorff ones. The  second aim is to show that all the properties that one would desire follow from the construction in a transparent way. Let us give some details. Let $X$ be any topological space and $\pi\colon X\to \{*\}$ the projection to a point; let us denote by $D(X)$ the (unbounded) derived category of the category of sheaves on $X$ and $D(\ZZ)$ the derived category of abelian groups. Let us consider the inverse image functor \[ \pi^{-1}\colon D(\ZZ)\to D(X).\]
This functor has a right adjoint, the derived global sections functor $\RR\Gamma(X,\quad)$. This is cohomology. We shall use Brown representability for the dual to prove  (Theorem \ref{derivedproduct}) that $\pi^{-1}$ has also a left adjoint, denoted by $\LL(X,\quad)$, under mild local hypothesis on $X$: we shall assume that $X$ is locally connected and locally cohomologically trivial (see Definition \ref{clc}); on locally compact Haussdorff spaces the latter condition can be weakened to a locally cohomologically connected condition (see Remark \ref{loc-compact}). We shall define the homology groups with coefficients on a sheaf $F$ as $H_i(X,F):=H_i\LL(X,F)$. It is important to remark that one has to use the unbounded derived category (instead of $D^-(X)$, as one would be tempted) in order to have Brown representability available, in the same way that Neeman simplified Grothendieck duality via Brown representability (\cite{Neeman}) by considering the unbounded derived category of quasi-coherent sheaves.

Once the homology groups $H_i(X,F)$ are defined, we show  in an easy way its main properties: (a) Long exact sequence associated to an exact sequence of sheaves, (b) Duality between homology and cohomology (Theorem \ref{homology-cohomology}), 
(c) Covariance with respect to con\-ti\-nuous maps $X\to Y$ (Proposition \ref{homology-cohomology}),  (d) Mayer Vietoris sequences (Propositions \ref{MV1} and \ref{MV2}), (e) Vietoris theorem and homotopy invariance (Theorem \ref{h-inv} and Corollary \ref{h-inv2}), (f) Excision (Proposition \ref{excision}),(g) Universal coefficients formula (Theorem \ref{universalcoefficients}), (h)   Cap product (Theorem \ref{cap}), (i) K\"{u}nneth formula (Theorem \ref{Kunneth}).

In section \ref{Sec-Poincare-Verdier} we make a comparison with Poincar\'e-Verdier duality in a very general form. Let $X$ be a topological space with a duality theory: an exact functor $\omega \colon D(X)\to D(\ZZ)$ that admits a right adjoint $\omega^!$ (for example, $X$ a locally compact Haussdorff space and $\omega=\RR\Gamma_c(X,\quad)$ the cohomology with compact support);  let us denote $D_X^\omega:=\omega^!(\ZZ)$ the ``dualizing complex'' and $H^i_\omega(X,F)=H^i(\omega(F))$. Then, there is a natural morphism (Proposition \ref{PV-comparison})
\[ H^{-i}(X,F\overset\LL\otimes D_X^\omega)\to H_i(X,F)\] and we determine when this morphism is an isomorphism for any $F$ (Theorem \ref{Poincare-Verdier}). We also characterize homological manifolds   in section \ref{homologicalmanifolds}, as Bredon does on locally compact Haussdoff spaces (\cite{Bredon})

Section \ref{homologyofgroups} is devoted to the comparison between homology of sheaves a homology of groups, as it happens for cohomology of sheaves and cohomology of groups. This requires the introduction of a functor from sheaves to locally constant sheaves which is the homological analog of the quasicoherator functor on schemes. This functor is again constructed from Brown representability.

Finally, section \ref{cosections} is devoted to the study of the underived version of homology, i.e, of the functor of cosections $\La(X,F)=H_0(X,F)$. It is nothing but the left adjoint of the inverse image $\pi^{-1}\colon \Mod(\ZZ)\to\Shv(X)$, where $\Mod(\ZZ)$ denotes the category of abelian groups. We shall see (Theorem \ref{enough-L-acyclics}) that there are enough $\La(X,\quad)$-acyclics and then $\LL(X,\quad)$ may be viewed as a left derived functor of $\La(X,\quad)$. We also see in section \ref{sheaves-cosheaves} how the functor of cosections relates  sheaves and cosheaves in  a natural way.

\section{Cosections}\label{cosections}

\subsection{Notations and conventions}
 
Let $X$ be a topological space. By a sheaf on $X$ we always mean a sheaf of abelian groups. We shall denote by $\Shv(X)$ the category of sheaves   on $X$. If $f\colon X'\to X$ is a continuous map and $F$ is a sheaf on $X$, $f^{-1}F$ denotes the inverse image sheaf by $f$. If $j\colon U\hookrightarrow X$ is an open subset, we also denote $j^{-1}F=F_{\vert U}$ and we denote by $j_!\colon \Shv(U)\to\Shv(X)$ the extension by zero functor, which is left adjoint of $j^{-1}$. We shall denote $F_U=j_! F_{\vert U}$.

Let $X$ be a topological space, $\pi_X\colon X\to \{*\}$ the projection to a point,   and $\Mod(\ZZ)$ the category of abelian groups.

\begin{prop}\label{directprodconstant} If $X$ is locally connected, the functor 
\[ \pi^{-1}_X \colon \Mod(\ZZ)\to \Shv(X)\] commutes with direct products.
\end{prop}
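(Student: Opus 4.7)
The plan is to produce the natural comparison map from the universal property of products and then check it is an isomorphism by restricting to a convenient basis. Given a family $\{A_i\}_{i\in I}$ of abelian groups, the projections $p_i\colon \prod_j A_j\to A_i$ yield morphisms $\pi^{-1}_X(p_i)\colon \pi^{-1}_X(\prod_j A_j)\to \pi^{-1}_X(A_i)$, and the universal property of the direct product in $\Shv(X)$ assembles these into a canonical map
\[
\phi\colon \pi^{-1}_X\Bigl(\prod_i A_i\Bigr)\to \prod_i \pi^{-1}_X(A_i).
\]
The goal is to show that $\phi$ is an isomorphism when $X$ is locally connected.

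The central input is the explicit description of constant sheaves on locally connected spaces: for any abelian group $A$ and any open $U\subset X$, the sheafification of the constant presheaf gives
\[
(\pi^{-1}_X A)(U)=A^{\pi_0(U)},
\]
and in particular $(\pi^{-1}_X A)(U)=A$ whenever $U$ is connected and nonempty. Since $X$ is locally connected, the family of connected open subsets forms a basis of the topology, so it suffices to verify that $\phi$ is an isomorphism when evaluated on each connected open $U$.

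On such a $U$, products in $\Shv(X)$ are computed sectionwise, so
\[
\Bigl(\prod_i \pi^{-1}_X A_i\Bigr)(U)=\prod_i (\pi^{-1}_X A_i)(U)=\prod_i A_i,
\]
while
\[
\Bigl(\pi^{-1}_X\bigl(\prod_i A_i\bigr)\Bigr)(U)=\prod_i A_i
\]
as well. Unwinding the construction of $\phi$, one sees that $\phi(U)$ is exactly the identity under these identifications, hence $\phi$ is an isomorphism on a basis and therefore an isomorphism of sheaves.

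The only mildly delicate point is justifying the description $(\pi^{-1}_X A)(U)=A^{\pi_0(U)}$; this uses that on a locally connected space the connected components of any open set are themselves open, so that the sheaf axiom forces sections on $U$ to be a tuple of sections on the components. Once this is in place, the proof is essentially a Fubini-type swap of products, and the local connectedness hypothesis is used in precisely one place, namely to guarantee a basis on which the computation is transparent.
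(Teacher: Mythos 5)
Your proof is correct and follows essentially the same route as the paper: construct the canonical comparison map, observe that on a connected open subset both sides have sections $\prod_i A_i$ and the map is the identity, and use local connectedness to pass from the basis of connected opens to an isomorphism of sheaves. The extra detail you give on $(\pi_X^{-1}A)(U)=A^{\pi_0(U)}$ is a fine elaboration of the same argument.
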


\begin{proof} Let $\{G_i\}_{i\in I}$ be a family of abelian groups. The natural morphism $\pi_X^{-1}(\prod_iG_i)\to\prod_i(\pi_X^{-1}G_i)$ is an isomorphism after taking sections on a connected open subset. Since $X$ is locally connected, it is a stalkwise isomorphism.
\end{proof}
 
\begin{thm} Let $X$ be a locally connected space. The functor $\pi^{-1}_X \colon \Mod(\ZZ)\to \Shv(X)$ has a left adjoint $$\La(X,\quad) \colon \Shv(X) \to \Mod(\ZZ) .$$ Thus, for any sheaf $F$ on  $X$ and any abelian group $G$ one has
\[ \Hom_\ZZ(\La(X,F),G)=\Hom_{\Shv(X)}(F,\pi_X^{-1}G).\]
\end{thm}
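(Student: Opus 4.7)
The plan is to invoke the special adjoint functor theorem: since $\Mod(\ZZ)$ is complete, locally small, well-powered, and admits the injective cogenerator $\QQ/\ZZ$, the theorem guarantees that any functor out of $\Mod(\ZZ)$ preserving all small limits has a left adjoint. So the task reduces to verifying that $\pi_X^{-1}$ preserves all small limits. Finite limits are preserved because $\pi_X^{-1}$ is exact (being a stalkwise operation), and arbitrary products are preserved by Proposition~\ref{directprodconstant}. Combining these, $\pi_X^{-1}$ preserves all small limits, and the left adjoint $\La(X,\quad)$ exists.

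For a more concrete picture one can describe $\La(X,F)$ explicitly as the colimit of $F(U)$ taken over the poset of connected open subsets $U\subseteq X$, with transition maps given by restriction. The adjunction formula reduces then to the following unwinding: because $X$ is locally connected, each open subset $V$ is the disjoint union of its (necessarily open) connected components $\{C_i\}$, so that $(\pi_X^{-1}G)(V)=\prod_i G$; in particular $(\pi_X^{-1}G)(U)=G$ when $U$ is connected. A morphism of sheaves $F\to\pi_X^{-1}G$ is therefore equivalent, via the projections to the connected component factors and their compatibility under restriction, to a family of group homomorphisms $F(U)\to G$ indexed by connected open $U$ and compatible with restriction among such. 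This is exactly the universal property of the above colimit, yielding the adjunction
\[ \Hom_\ZZ(\La(X,F),G)=\Hom_{\Shv(X)}(F,\pi_X^{-1}G). \]

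The main subtlety, in either approach, is the essential use of local connectedness: without it Proposition~\ref{directprodconstant} fails (so the abstract argument breaks), and the identification $(\pi_X^{-1}G)(U)=G$ for $U$ connected is no longer available (so the explicit colimit would not compute the left adjoint). Once local connectedness is in hand, however, both routes are short and direct, and the explicit description will be useful later when relating $\La(X,\quad)$ to its derived version and to cosheaves.
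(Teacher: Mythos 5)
Your argument is correct and is essentially the paper's: the paper's one-line proof also deduces the left adjoint from the facts that $\pi_X^{-1}$ is exact and commutes with direct products (Proposition \ref{directprodconstant}), i.e.\ an adjoint functor theorem applied to the complete, well-powered category $\Mod(\ZZ)$ with cogenerator $\QQ/\ZZ$. Your explicit colimit description and the unwinding of the adjunction reproduce what the paper records immediately afterwards in its ``Explicit computation'' proposition, so nothing is missing.
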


\begin{proof} Since $\pi_X^{-1}$ is exact and commutes with direct products, it has a left adjoint.
\end{proof}
 {\bf\ \ Notation}. As it is usual, the constant sheaf $\pi_X^{-1}G$ shall be denoted simply by $G$.
 
\begin{defn}  The group $\La(X,F) $ shall be called {\em cosections} of $F$ on $X$. 
\end{defn}
 
 For the rest   section 1, the space is always assumed to be locally connected.
 
\begin{prop}[Explicit computation] Let $I$ be the set of connected and non-empty open subsets of $X$. For each $i\in I$, let $U_i$ be the connected open subset corresponding to $i$. We say that $i\leq j$ if $U_i\supseteq U_j$. Then
\[ \La(X,F)= \ilim{i\in I} F(U_i).\] For example, for any constant sheaf $G$:
\[ \La(X,G)=G^{\oplus \pi_0(X)}.\]
\end{prop}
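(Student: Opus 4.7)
The plan is to invoke Yoneda: by the defining adjunction of $\La(X,\quad)$ established above, $\Hom_\ZZ(\La(X,F),G) = \Hom_{\Shv(X)}(F,\pi_X^{-1}G)$ for every abelian group $G$, so it suffices to identify this $\Hom$-set with $\Hom_\ZZ(\ilim{i\in I}F(U_i),G)$, naturally in $G$.

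Since $X$ is locally connected, the family $\{U_i\}_{i\in I}$ is a basis for the topology of $X$, and on each connected open the constant sheaf satisfies $(\pi_X^{-1}G)(U_i)=G$. Consequently a sheaf morphism $\varphi\colon F\to \pi_X^{-1}G$ is the same as a family of homomorphisms $f_i\colon F(U_i)\to G$ compatible with the restriction maps $F(U_i)\to F(U_j)$ for every pair $U_j\subseteq U_i$; for an arbitrary open $V$, the corresponding value in $(\pi_X^{-1}G)(V)=G^{\pi_0(V)}$ is reconstructed from the $f_i$'s by evaluating on each connected component of $V$ (which is open thanks to local connectedness). But such a compatible family is exactly a cocone from the diagram $i\mapsto F(U_i)$ to the constant object $G$, i.e.\ a morphism $\ilim{i\in I}F(U_i)\to G$. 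All bijections being natural in $G$, Yoneda concludes.

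For the example $F=G$ constant, all transition maps become identities of $G$, and the index category splits as a disjoint union $I=\bigsqcup_{C\in\pi_0(X)} I_C$, where $I_C$ consists of the connected opens contained in the connected component $C$ (a connected open lies in a unique component, and any morphism $i\to j$ forces $U_j\subseteq U_i$ to stay in the same component). Each component $C$ is itself open (by local connectedness), belongs to $I_C$, and is the minimum element of $I_C$ under the ordering $i\leq j \Leftrightarrow U_i\supseteq U_j$; hence $C$ is initial in $I_C$, and the colimit of the constant diagram $G$ over $I_C$ is $G$. Summing over $\pi_0(X)$ yields $G^{\oplus \pi_0(X)}$.

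The main point requiring care is the standard fact that a sheaf morphism into a sheaf is determined by its values on a basis of open sets, together with compatibility with restrictions; once this is in hand, the rest is a direct translation between compatible families and universal arrows out of a colimit.
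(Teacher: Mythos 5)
Your argument is correct and follows essentially the same route as the paper: identify, via the defining adjunction, morphisms $F\to\pi_X^{-1}G$ with families $F(U_i)\to G$ compatible with restrictions (using that a sheaf on a locally connected space is determined by its values on connected opens), hence with morphisms $\ilim{i\in I}F(U_i)\to G$, and conclude by Yoneda. Your extra verification of the example $\La(X,G)=G^{\oplus\pi_0(X)}$ by splitting the index poset over $\pi_0(X)$ is a fine elaboration of what the paper leaves implicit.
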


\begin{proof} Let $F$ be a sheaf. Since $X$ is locally connected, for any open subset $U$, $F(U)=\prod_{j\in\pi_0(U)} F(U_j)$. In other words, $F$ is uniquely determined by its value on connected open subsets. Then, if $G$ is a constant sheaf, a morphism $F\to G$ is equivalent to giving a collection of morphisms $\phi_i\colon F(U_i)\to G(U_i)$ which are compatible with restrictions. Since $G(U_i)=G$ and the restrictions morphisms $G(U_i)\to G(U_j)$ are the identity, we conclude that a morphism $F\to G$ is a collection of morphisms $\phi_i\colon F(U_i)\to G$ such that
\[\xymatrix{ F(U_i)\ar[rr]^{\phi_i}\ar[rd] & & G\\ & F(U_j)\ar[ur]_{\phi_j}}\] is commutative for any $i\leq j$. This is a morphism $\ilim{i\in I} F(U_i)\to G$.
\end{proof}
 
\begin{prop}From representability, it follows \begin{enumerate} \item $\La(X,\quad)$ is right exact: an epimorphism $F\to F'\to 0$ induces an epimorphism $\La(X,F)\to \La(X,F')\to 0.$
\item $\La(X,\quad)$ commutes with direct sums: $ \La(X,\oplus_i F_i)=\oplus_i 
\La(X,F_i)$. More generally, $\La(X,\quad)$ commutes with direct limits.
\end{enumerate}
\end{prop}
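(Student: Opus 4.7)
The plan is to deduce both statements from the general principle that any left adjoint between abelian categories is right exact and commutes with all colimits. Since the preceding theorem established $\La(X,\quad)$ as the left adjoint of $\pi_X^{-1}$, both assertions follow formally, with no further use of the explicit formula for $\La(X,F)$.

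For (1), I would realize $F'$ as a cokernel: given an epimorphism $F\to F'\to 0$, set $K=\Ker(F\to F')$, so that $F'=\Coker(K\to F)$ in $\Shv(X)$. Because $\La(X,\quad)$ is a left adjoint, it preserves finite colimits and in particular cokernels, so
\[ \La(X,F')=\Coker\bigl(\La(X,K)\to \La(X,F)\bigr), \]
which immediately yields the surjection $\La(X,F)\to \La(X,F')\to 0$. For (2), the analogous reasoning applies to arbitrary colimits: for any abelian group $G$ and any direct system $\{F_i\}$ in $\Shv(X)$, the adjunction together with the universal property of the direct limit gives
\[ \Hom_\ZZ\bigl(\La(X,\ilim{i}F_i),G\bigr)=\Hom_{\Shv(X)}\bigl(\ilim{i}F_i,\pi_X^{-1}G\bigr)=\plim{i}\Hom_\ZZ\bigl(\La(X,F_i),G\bigr), \]
and by Yoneda this identifies $\La(X,\ilim{i}F_i)$ with $\ilim{i}\La(X,F_i)$. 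The case of direct sums is the special case of a discrete indexing set.

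There is no genuine obstacle here: both statements are standard formal consequences of the existence of a right adjoint, and nothing about the space $X$ or the specific structure of sheaves beyond the adjoint pair plays a role. The only mild caveat worth keeping in mind is that direct limits in $\Shv(X)$ are computed as the sheafification of the corresponding presheaf colimit, but this has no bearing on the abstract argument since the adjunction isomorphism is purely categorical.
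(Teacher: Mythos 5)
Your proposal is correct and is exactly the argument the paper intends: the Proposition is stated as an immediate consequence of $\La(X,\quad)$ being a left adjoint of $\pi_X^{-1}$, and your deduction (left adjoints preserve cokernels and all colimits, verified via the adjunction and Yoneda) is the standard way to make that precise. Nothing further is needed.
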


\begin{prop} Let $f\colon X\to Y$ be a continuous map between locally connected spaces. For any sheaf $F$ on $Y$ one has a natural morphism
\[ f_*\colon \La (X,f^{-1}F)\to \La (Y,F).\] If $g\colon Y\to Z$ is another continuous map, then $(g\circ f)_*= g_*\circ f_*$. Moreover $(\id_X)_*=\id$.
\end{prop}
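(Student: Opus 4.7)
The plan is to construct $f_*$ via the adjunction $\La(X,\quad)\dashv \pi_X^{-1}$. The key observation is that $\pi_X=\pi_Y\circ f$, so $\pi_X^{-1}=f^{-1}\pi_Y^{-1}$ as functors $\Mod(\ZZ)\to\Shv(X)$. Applying $f^{-1}$ to the unit of the adjunction on $Y$, namely $\eta^Y_F\colon F\to \pi_Y^{-1}\La(Y,F)$, yields a natural morphism
\[ f^{-1}\eta^Y_F\colon f^{-1}F\to f^{-1}\pi_Y^{-1}\La(Y,F)=\pi_X^{-1}\La(Y,F).\]
I would then define $f_*\colon \La(X,f^{-1}F)\to \La(Y,F)$ as the adjoint of this morphism under $\La(X,\quad)\dashv \pi_X^{-1}$. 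Naturality in $F$ is automatic from the naturality of the unit, of $f^{-1}$, and of the adjunction bijection.

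For the identity case $f=\id_X$ the morphism $f^{-1}\eta^X_F$ reduces to $\eta^X_F$ itself, whose adjoint is $\id_{\La(X,F)}$ by one of the triangle identities of the adjunction.

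For the composition law, let $g\colon Y\to Z$ be continuous. By construction, $g_*$ is characterized by $\pi_Y^{-1}(g_*)\circ \eta^Y_{g^{-1}F}=g^{-1}\eta^Z_F$, and $f_*$ (applied to the sheaf $g^{-1}F$ on $Y$) is characterized by $\pi_X^{-1}(f_*)\circ \eta^X_{f^{-1}g^{-1}F}=f^{-1}\eta^Y_{g^{-1}F}$. Applying $f^{-1}$ to the first identity and pasting it onto the second, using $\pi_X^{-1}=f^{-1}\pi_Y^{-1}$, I get
\[ \pi_X^{-1}(g_*\circ f_*)\circ \eta^X_{f^{-1}g^{-1}F}=f^{-1}g^{-1}\eta^Z_F=(g\circ f)^{-1}\eta^Z_F,\]
which is exactly the defining relation of $(g\circ f)_*$. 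Uniqueness of the adjoint then forces $(g\circ f)_*=g_*\circ f_*$.

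I do not foresee any real obstacle: the entire statement is a formal consequence of the adjunction $\La(X,\quad)\dashv \pi_X^{-1}$ and of the strict equality $\pi_X^{-1}=f^{-1}\pi_Y^{-1}$. The only care required is in bookkeeping which unit belongs to which space and in using the identification $\pi_X^{-1}=f^{-1}\pi_Y^{-1}$ consistently, so that the composites in question land in the correct sheaves for the adjunction to be applied.
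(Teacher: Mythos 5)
Your proof is correct and is essentially the paper's argument in dual form: the paper defines $f_*$ by Yoneda from the natural map $\Hom_{\Shv(Y)}(F,G)\to\Hom_{\Shv(X)}(f^{-1}F,G)$ induced by $f^{-1}$ and the identification $f^{-1}\pi_Y^{-1}=\pi_X^{-1}$, which is exactly the same construction as taking the adjoint of $f^{-1}\eta^Y_F$ under $\La(X,\quad)\dashv\pi_X^{-1}$, and functoriality follows in both cases from $(g\circ f)^{-1}=f^{-1}\circ g^{-1}$. Your unit-based characterization of $f_*$ is in fact the content of the paper's Remark \ref{rem-f_*} (in its derived version), so nothing is missing.
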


\begin{proof} For any abelian group $G$ one has a natural morphism $$\Hom_{\Shv(Y)}(F, G)\to \Hom_{\Shv(X)}(f^{-1}F,f^{-1} G)= \Hom_{\Shv(X)}(f^{-1}F, G)$$ i.e., a morphism
\[ \Hom_\ZZ(\La(Y,F),G)\to \Hom_\ZZ(\La(X,f^{-1}F),G)\] and then the desired morphism $f_*\colon \La (X,f^{-1}F)\to \La (Y,F)$. The equality $(g\circ f)_*= g_*\circ f_*$ follows from the construction and from the equality $f^{-1}\circ g^{-1}=(g\circ f)^{-1}$;  the equality $(\id_X)_*=\id$ from the equality $(\id_X)^{-1}=\id$.
\end{proof}

\begin{prop}\label{F_U} For any open subset $j\colon U\hookrightarrow X$ and any  sheaf $F'$ on $U$, one has
\[ \La(U,F')=\La(X,j_!F').\] Hence, for any sheaf $F$ on $X$ one has
\[ \La(U,F_{\vert U})=\La(X,F_U)\] and the morphism $j_*\colon \La(U,F_{\vert U})\to \La(X,F )$ is obtained by applying $\La(X,\quad)$ to the morphism $F_U\to F$. 
\end{prop}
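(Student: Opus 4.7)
The plan is to deduce everything from uniqueness of adjoints. Write $\pi_X\colon X\to\{*\}$ and $\pi_U\colon U\to\{*\}$, so that $\pi_U=\pi_X\circ j$ and hence $\pi_U^{-1}=j^{-1}\circ\pi_X^{-1}$. Since $j_!$ is left adjoint to $j^{-1}$ and $\La(X,\quad)$ is left adjoint to $\pi_X^{-1}$, the composition $\La(X,\quad)\circ j_!$ is a left adjoint of $j^{-1}\circ\pi_X^{-1}=\pi_U^{-1}$. By uniqueness of left adjoints, this functor must coincide naturally with $\La(U,\quad)$, which yields the first identification $\La(U,F')=\La(X,j_!F')$. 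Taking $F'=F_{\vert U}$ and using the definition $F_U:=j_!(F_{\vert U})$ gives the second identification $\La(U,F_{\vert U})=\La(X,F_U)$.

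For the last assertion I would show that, under the natural isomorphism just produced, the morphism $j_*$ corresponds to $\La(X,\varepsilon)$, where $\varepsilon\colon F_U\to F$ is the counit of $j_!\dashv j^{-1}$. The test is to evaluate $\Hom_\ZZ(\,\cdot\,,G)$ on both maps and unwind the adjunctions. The functor $\Hom_\ZZ(\La(X,\varepsilon),G)$ is the precomposition morphism $\Hom_{\Shv(X)}(F,G)\to\Hom_{\Shv(X)}(F_U,G)$; via the $j_!\dashv j^{-1}$ adjunction the target becomes $\Hom_{\Shv(U)}(F_{\vert U},j^{-1}G)=\Hom_{\Shv(U)}(F_{\vert U},G)$, and precomposition with $\varepsilon$ is transformed by the triangle identities into the map $\phi\mapsto j^{-1}\phi$. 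By the very definition of $j_*$ recalled in the preceding proposition, $\Hom_\ZZ(j_*,G)$ is the same map $\phi\mapsto j^{-1}\phi$. Hence $\La(X,\varepsilon)=j_*$.

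The first two identifications are purely formal from uniqueness of adjoints, so the only real obstacle is the final compatibility check: one has to match the abstract isomorphism produced by uniqueness of adjoints with the concrete $j_*$ constructed earlier. This amounts to tracing the counit of $j_!\dashv j^{-1}$ through the two adjunctions, a standard but mildly tedious diagram chase.
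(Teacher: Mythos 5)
Your proposal is correct and follows essentially the same route as the paper: the identification $\La(U,\quad)=\La(X,\quad)\circ j_!$ comes from composing the adjunctions $j_!\dashv j^{-1}$ and $\La(X,\quad)\dashv\pi_X^{-1}$ together with $\pi_U^{-1}=j^{-1}\circ\pi_X^{-1}$, and the statement about $j_*$ is checked from its construction. Your unwinding of the counit through the adjunctions simply makes explicit the verification the paper summarizes as ``follows from its construction.''
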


\begin{proof} The functor $j^{-1}\colon \Shv(X)\to\Shv(U)$ is right  adjoint of $j_!\colon \Shv(U)\to\Shv(X)$. The equality $\pi_U^{-1}=j^{-1}\circ \pi_X^{-1}$ induces, by adjunction, an equality $\La(U,\quad)=\La(X,\quad)\circ j_!$.    The statement about $j_*$ follows from its construction. 
\end{proof}

\subsection{Sheaves and cosheaves}\label{sheaves-cosheaves}

\begin{defn} (\cite[Ch.V, Def. 1.1]{Bredon}) A {\em precosheaf} $\Q$ on $X$ is a {\em covariant} functor from the category of open subsets on $X$ to abelian groups. Thus, one has an abelian group $\Q(U)$ for each open subset $U$, and a morphism of groups $e_{VU}\colon \Q(V)\to \Q(U)$ for each $V\subseteq U$ such that $e_{UU}=\id$ for any $U$ and $e_{WU}=e_{VU}\circ e_{WV}$ for any $W\subseteq V\subseteq U$. A {\em cosheaf} is a precosheaf $\Q$ such that, for any open subset $U$ and any open covering $U=\underset i\cup U_i$, the sequence (with obvious morphisms)
\[ \underset{i,j}\oplus \Q(U_{ij})\to\underset i\oplus \Q(U_i)\to\Q(U)\to 0\] is exact, where $U_{ij}=U_i\cap U_j$. A morphism of  cosheaves $\Q_1\to\Q_2$ is just a morphism of functors, i.e., a natural transformation. We shall denote by $\Coshv(X)$ the category of cosheaves on $X$. 

A sequence of cosheaves $\Q_1\to\Q_2 \to\Q_3\to 0$ is called {\em exact} if $\Q_1(U)\to\Q_2(U)\to\Q_3(U)\to 0$ is exact   for any open subset $U$. In this case, the sequence $0\to \Hom(\Q_3,\Q)\to \Hom(\Q_2,\Q)\to \Hom(\Q_1,\Q)$ is exact for any $\Q$.

The direct sum $\oplus_i\Q_i$ of cosheaves is defined by $(\oplus_i \Q_i)(U)=\oplus_i \Q_i(U)$; it is a cosheaf, and it is the categorical direct sum.

\end{defn}

\begin{ejems}\label{ejems-cosheaves} (1) Any abelian group $G$ defines a  cosheaf $G^{\text{cos}}$, named constant cosheaf, by 
\[ G^{\text{cos}}(U)=G^{\oplus\pi_0(U)}\] (notice that we assume that $X$ is locally connected, and then the connected components are open subsets). For any cosheaf $\Q$ and any abelian group $G$ one has
\[ \Hom_{\Coshv}(\Q,G^{\text{cos}})=\Hom_\ZZ(Q(X),G).\]

(2) Let $j\colon U\hookrightarrow X$ be an open subset.  A cosheaf $\Q'$ on $X$ induces a cosheaf $\Q'_{\vert U}$ on each open subset $U$, defined as 
$$\Q'_{\vert U}(V)=\Q'(V)$$ for any open subset $V$ of $U$. This defines a functor
\[ \Coshv(X)\to\Coshv(U), \Q'\mapsto \Q'_{\vert U}\] that takes constant cosheaves into constant cosheaves (i.e., $(G^{\text{cos}})_{\vert U}=G^{\text{cos}}$). Conversely, a cosheaf $\Q$ on $U$ induces a cosheaf $j_!\Q$ on $X$, defined by $$(j_!\Q)(V)=Q(U\cap V).$$ This defines a functor
\[ j_!\colon \Coshv(U)\to\Coshv(X) \] and one has an adjunction
\[\Hom_{\Coshv(X)} (j_!\Q,\Q') = \Hom_{\Coshv(U)} (Q,\Q'_{\vert U}).\] For any abelian group $G$, we shall denote $G_U^{\text{cos}}:=j_!(G^{\text{cos}})$. If $V\subseteq U$, one has a natural morphism  $G_V^{\text{cos}}\to  G_U^{\text{cos}}$. 

(3) For any cosheaves $\Q,\Q'$, we can define a sheaf $\HHom(\Q,\Q')$ by:
\[ \HHom(\Q,\Q')(U)=\Hom(\Q_{\vert U},\Q'_{\vert U})\] and obvious restriction morphisms. We leave the reader to check that it is in fact a cosheaf.
\end{ejems}

If $F$ is a sheaf on $X$, the covariant behaviour of $\La (U,F_{\vert U})$ with respect to $U$ shows that there is a cosheaf attached to $F$:

\begin{defn} Let $F$ be a sheaf on $X$. The {\em associated cosheaf}, denoted by   $\co (F) $, is defined by
\[ \co (F)  (U):=\La(U,F_{\vert U})\] and for any $V\subseteq U$, the morphism $\co (F)(V)\to \co (F)(U)$ is $i_*\colon \La(V,F_{\vert V})\to \La(U,F_{\vert U})$, with $i\colon V\hookrightarrow U$. By Proposition \ref{F_U}, $\co (F)(U)=\La (X,F_U)$ and $\co (F)(V)\to \co (F)(U)$ is obtained by applying $\La(X,\quad)$ to $F_V\to F_U$.
\end{defn}

\begin{rem} $\co (F)$ is indeed a cosheaf. For any open subset $U$ and any open covering $U=\underset i\cup U_i$, one has an exact sequence of sheaves (let us denote $U_{ij}=U_i\cap U_j$)
\[ \underset{i,j}\oplus F_{U_{ij}}\to\underset i\oplus F_{U_i}\to F_U\to 0\] and then, applying $\La(X,\quad)$, an exact sequence of abelian groups
\[ \underset{i,j}\oplus \,\co (F)(U_{ij})\to\underset i\oplus \,\co (F)(U_i)\to \co (F)(U)\to 0.\]
\end{rem}

A morphism of sheaves $F_1\to F_2$ induces a morphism of cosheaves $\co (F_1)\to \co (F_2)$, thus we obtain a functor $$\co \colon \Shv(X)\to\Coshv(X).$$  

\begin{prop}\label{cos-properties}  The functor $\co$ satisfies:\begin{enumerate}
\item It commutes with restrictions: for any open subset $U$ and any sheaf $F$, one has: $\co (F)_{\vert U}=\co(F_{\vert U})$.
\item It commutes with $j_!$. For any open subset $j\colon U\hookrightarrow X$ and any sheaf $F$ on $U$ one has: $\co(j_!F)=j_!\,\co(F)$.
\item It takes constant sheaves into constant cosheaves: $\co(G)=G^{\text{cos}}$, for any abelian group $G$. Combining with {\rm (2)}, one has $$\co(G_U)=G_U^{\text{cos}}$$ for any abelian group $G$ and any open subset $U$.

\item It commutes with direct sums (more generally, with direct limits): $\co(\underset i\oplus  F_i)=\underset i \oplus \,\co(F_i)$.
\item It is right exact: if $F'\to F\to F''\to 0$ is an exact sequence of sheaves, then $\co(F')\to \co( F)\to  \co(F'')\to 0$ is an exact sequence of cosheaves.
\end{enumerate}
\end{prop}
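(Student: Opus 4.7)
The plan is to derive each property from the defining formula $\co(F)(U)=\La(X,F_U)$ together with the right exactness and direct-sum preservation of $\La(X,\quad)$ and with standard properties of the extension-by-zero functor $j_!$. The unifying observation is that for an inclusion of open subsets $V\subseteq U\subseteq X$, the operation $F\mapsto F_U$ is exact and commutes with direct limits, and that the compatibilities $(F_{\vert U})_{\vert V}=F_{\vert V}$ and $(j_!F')_{\vert V}=(j')_!(F'_{\vert U\cap V})$ (where $j'\colon U\cap V\hookrightarrow V$) hold at the sheaf level. Once this is clear, each of the five statements reduces to checking an equality on open subsets.

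First I would prove (1): for $V\subseteq U$, one has
\[
\co(F)_{\vert U}(V)=\co(F)(V)=\La(V,F_{\vert V})=\La(V,(F_{\vert U})_{\vert V})=\co(F_{\vert U})(V),
\]
and the restriction morphisms match by construction. For (2), I would unwind both sides: on the one hand $(j_!\co(F))(V)=\co(F)(U\cap V)=\La(U\cap V,F_{\vert U\cap V})$, and on the other $\co(j_!F)(V)=\La(V,(j_!F)_{\vert V})=\La(V,(j')_!(F_{\vert U\cap V}))$, which equals $\La(U\cap V,F_{\vert U\cap V})$ by Proposition \ref{F_U}. Checking these identifications are compatible with the restriction maps of the two cosheaves is routine.

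For (3), the explicit computation gives $\co(G)(U)=\La(U,G_{\vert U})=\La(U,G)=G^{\oplus\pi_0(U)}=G^{\text{cos}}(U)$, and the combined formula $\co(G_U)=G^{\text{cos}}_U$ then follows from (2). Property (4) follows because $j_!$ (as a left adjoint) commutes with direct sums and direct limits, and so does $\La(X,\quad)$; hence so does their composition $\co$. For (5), an exact sequence $F'\to F\to F''\to 0$ yields, for every open $U$, an exact sequence $F'_U\to F_U\to F''_U\to 0$ of sheaves on $X$ (because $j_!$ is exact and $(\quad)_{\vert U}$ is exact), and applying the right exact functor $\La(X,\quad)$ gives exactness of $\co(F')(U)\to\co(F)(U)\to\co(F'')(U)\to 0$.

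I do not anticipate a serious obstacle; the only mildly subtle point is the formula $(j_!F')_{\vert V}=(j')_!(F'_{\vert U\cap V})$ used in (2), but this is a standard identity about extension by zero, provable stalkwise. Everything else is a direct consequence of the already-established adjunction $\La(U,\quad)=\La(X,\quad)\circ j_!$ and the formal properties of $\La(X,\quad)$ recorded earlier.
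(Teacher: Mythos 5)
Your proof is correct and follows essentially the same route as the paper: each property is verified on open subsets using $\co(F)(U)=\La(U,F_{\vert U})=\La(X,F_U)$, Proposition \ref{F_U}, and the right exactness and colimit-preservation of $\La$. The only cosmetic difference is that you occasionally phrase things through $F\mapsto F_U$ where the paper uses restriction, but these are identified by Proposition \ref{F_U}, so the arguments coincide.
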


\begin{proof} (1) For any open subset $V$ of $U$,
\[ [\co (F)_{\vert U}](V)= \co (F)(V) = \La (V,F_{\vert V})=\La(V,(F_{\vert U})_{\vert V})=\co(F_{\vert U})(V). \]

(2) For any open subset $V$ of $X$, one has, on the one hand
\[ (j_!\,\co F))(V)= \co(F)(U\cap V)=\La(U\cap V,F_{\vert U\cap V}),\] and, on the other hand (let us denote $j'\colon U\cap V\hookrightarrow V$)
\[\co(j_!F)(V)=\La(V,(j_!F)_{\vert V})=\La(V, (j')_! F_{\vert U\cap V})\overset{\ref{F_U}}=\La(U\cap V,F_{\vert U\cap V}).\]

(3) For any open subset $U$ one has
$$\cos(G)(U)=\La(U,G)=G^{\oplus\pi_0(U)}=G^{\text{cos}}(U). $$

(4) $\co(\underset i\oplus  F_i)(U)=\La(U,(\underset i\oplus  F_i)_{\vert U})= \La(U, \underset i\oplus  {F_i}_{\vert U}) = \underset i\oplus \La(U,  {F_i}_{\vert U}) =\underset i \oplus  \,\co(F_i)(U) = (\underset i\oplus\,  \co(F_i))(U).$ Analogous proof for the direct limit.

(5) Since $F\mapsto \La(U,F_{\vert U})$ is right exact, one concludes. 
\end{proof}

\begin{defn} Since the functor $\co\colon\Shv(X)\to\Coshv(X)$ is right exact and commutes with direct sums, it  has a right adjoint 
\[ \shf\colon \Coshv(X)\to\Shv(X).\] Thus, for any sheaf $F$ and cosheaf $\Q$ one has
\[\Hom_{\Coshv }(\co (F),\Q)=\Hom_{\Shv }(F,\shf(\Q)).\]
\end{defn}

The explicit description of $\shf(\Q)$ is given by the following:

\begin{prop} Let $\Q$ be a cosheaf. For any open subset $U$, one has
\[ \shf(\Q)(U)=\Hom_{\Coshv(X)}(\ZZ_U^{\text{\rm cos}},\Q)=\Hom_{\Coshv(U)}(\ZZ^{\text{\rm cos}},\Q_{\vert U})\] and for any $V\subseteq U$, the morphism $$\shf(\Q)(U)\to \shf(\Q)(V)$$ is obtained by taking $\Hom(\quad,\Q)$ in the natural morphism $\ZZ_V^\text{\rm cos}\to \ZZ_U^\text{\rm cos}$ (or it is the natural restriction morphism $\Hom(\ZZ^{\text{\rm cos}},\Q_{\vert U})\to \Hom(\ZZ^{\text{\rm cos}},\Q_{\vert V})$). In other words (see {\rm Examples \ref{ejems-cosheaves},   (3)}),
\[\shf(\Q)=\HHom(\ZZ^{\text{\rm cos}},\Q) .\]
\end{prop}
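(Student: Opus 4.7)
The plan is to compute $\shf(\Q)(U)$ by evaluating $\Hom_{\Shv(X)}(F,\shf(\Q))$ on an appropriate representing sheaf $F$, use the defining adjunction of $\shf$, and then transport everything through the identifications for $\co$ and $j_!$ that were already established.

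First I would take $F=\ZZ_U$ in the adjunction $\Hom_{\Coshv}(\co(F),\Q)=\Hom_{\Shv}(F,\shf(\Q))$. On the sheaf side, by the $j_!\dashv j^{-1}$ adjunction for sheaves,
\[
\Hom_{\Shv(X)}(\ZZ_U,\shf(\Q))=\Hom_{\Shv(U)}(\ZZ,\shf(\Q)_{\vert U})=\shf(\Q)(U),
\]
the last step being the standard description of sections of a sheaf as morphisms from the constant sheaf $\ZZ$. On the cosheaf side, Proposition \ref{cos-properties}(2) and (3) give
\[
\co(\ZZ_U)=\co(j_!\ZZ)=j_!\,\co(\ZZ)=j_!\,\ZZ^{\text{cos}}=\ZZ_U^{\text{cos}},
\]
and hence $\shf(\Q)(U)=\Hom_{\Coshv(X)}(\ZZ_U^{\text{cos}},\Q)$. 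The second equality $\Hom_{\Coshv(X)}(\ZZ_U^{\text{cos}},\Q)=\Hom_{\Coshv(U)}(\ZZ^{\text{cos}},\Q_{\vert U})$ is now immediate from the $j_!\dashv (\,)_{\vert U}$ adjunction for cosheaves stated in Examples \ref{ejems-cosheaves}(2), applied to $\Q'=\Q$ and $\Q=\ZZ^{\text{cos}}$ (noting that $(\ZZ^{\text{cos}})_{\vert U}=\ZZ^{\text{cos}}$, again from \ref{ejems-cosheaves}(2)).

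For the restriction maps, given $V\subseteq U$ the inclusion of open sets induces, via the functor $j_!$ on sheaves applied to the inclusion $\ZZ_V\to\ZZ_U$ (or directly via Examples \ref{ejems-cosheaves}(2)), a morphism $\ZZ_V^{\text{cos}}\to \ZZ_U^{\text{cos}}$; taking $\Hom_{\Coshv(X)}(-,\Q)$ produces a morphism $\Hom(\ZZ_U^{\text{cos}},\Q)\to \Hom(\ZZ_V^{\text{cos}},\Q)$. To see that this coincides with the natural restriction of the presheaf $\shf(\Q)$, I would invoke naturality of the two adjunctions used above: the restriction $\shf(\Q)(U)\to\shf(\Q)(V)$ is, by definition of the sheaf structure, the map $\Hom(\ZZ_U,\shf(\Q))\to \Hom(\ZZ_V,\shf(\Q))$ induced by $\ZZ_V\to\ZZ_U$, and naturality of $\Hom(\co(-),\Q)=\Hom(-,\shf(\Q))$ translates this into the previously described map.

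The final formula $\shf(\Q)=\HHom(\ZZ^{\text{cos}},\Q)$ then only requires unpacking the definition of $\HHom$ in Examples \ref{ejems-cosheaves}(3): its sections on $U$ are $\Hom_{\Coshv(U)}(\ZZ^{\text{cos}}_{\vert U},\Q_{\vert U})=\Hom_{\Coshv(U)}(\ZZ^{\text{cos}},\Q_{\vert U})$, which is exactly what we computed. No step is really hard; the only subtle point is the naturality check that guarantees the two candidate restriction morphisms agree, but this is forced by functoriality of the two adjunctions involved.
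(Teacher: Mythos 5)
Your proof is correct and follows essentially the same route as the paper: evaluate the adjunction $\Hom_{\Coshv}(\co(F),\Q)=\Hom_{\Shv}(F,\shf(\Q))$ at $F=\ZZ_U$, identify $\co(\ZZ_U)=\ZZ_U^{\text{cos}}$ via Proposition \ref{cos-properties}, and pass to $\Hom_{\Coshv(U)}(\ZZ^{\text{cos}},\Q_{\vert U})$ by the $j_!$--restriction adjunction for cosheaves. You merely spell out the naturality check for the restriction maps that the paper dismisses as immediate.
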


\begin{proof} One has $\shf(\Q)(U)=\Hom(\ZZ_U,\shf(\Q))=\Hom(\co(\ZZ_U),\Q)$. One concludes because $\co(\ZZ_U)=\ZZ_U^{\text{cos}}$, by  (3) of   Proposition \ref{cos-properties}. The statement about $V\subseteq U$ is immediate.
\end{proof}

%

\begin{rem}
An alternative construction of $\shf(\Q)$ can be done in an analogous way to that of $\co (F)$ in the following way. We leave the reader to check the details. The functor $\Mod(\ZZ)\to \Coshv(X), G\mapsto G^\text{{cos}}$, has a right adjoint, because it is right exact and commutes with direct sums. Thus, for any cosheaf $\Q$ there is an abelian group $\Gamma(X,\Q)$ (the group of {\em sections} of $\Q$ on $X$) and an isomorphism
\[ \Hom(G,\Gamma(X,\Q))=\Hom (G^{\text{cos}},\Q)\] and it may be explicitely computed by the formula
\[ \Gamma(X,\Q)=\plim{i\in I}\Q(U_i)\] where $I$ is the set of connected and non empty open subsets of $X$ and $i\leq j$ if $U_i\subseteq U_j$. Now, for any open subsets $U\subseteq V$, the natural morphism  $\Hom (G^{\text{cos}},\Q_{\vert V}) \to \Hom (G^{\text{cos}},\Q_{\vert U}) $ induces a morphism
\[\Gamma(V,\Q_{\vert V})\to \Gamma(U,\Q_{\vert U}).\] The assignation $U\mapsto  \Gamma(U,\Q_{\vert U})$ is the sheaf $\shf(\Q)$ defined above; in other words, $$\shf(\Q)(U)=\Gamma(U,\Q_{\vert U}).$$
\end{rem}

\begin{prop}\label{shf-properties} The functor $\shf$ satisfies
\begin{enumerate} \item It commutes with restrictions: $\shf(\Q)_{\vert U}=\shf(\Q_{\vert U})$.

\item  It takes constant cosheaves into constant sheaves: $ \shf(G^{\text{\rm cos}})=G.$ 
\item It is left exact: If $0\to{\Q}'\to\Q\to\Q$ is an exact sequence of cosheaves (i.e., it is exact when applied to each $U$), then $0\to\shf(\Q')\to\shf(\Q) \to \shf(\Q)$ is an exact sequence of sheaves.
\end{enumerate}
\end{prop}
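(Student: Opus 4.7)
The plan is to derive all three parts from the explicit formula $\shf(\Q)(U)=\Hom_{\Coshv(U)}(\ZZ^{\text{cos}},\Q_{\vert U})$ given in the preceding proposition, together with the adjunction properties recorded in Examples \ref{ejems-cosheaves}.

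For (1), I would use that restriction of cosheaves is transitive: if $V\subseteq U$, then $(\Q_{\vert U})_{\vert V}=\Q_{\vert V}$ directly from the formula $\Q_{\vert U}(W)=\Q(W)$. Plugging into the explicit description of $\shf$ yields
\[
\shf(\Q)_{\vert U}(V)=\shf(\Q)(V)=\Hom_{\Coshv(V)}(\ZZ^{\text{cos}},\Q_{\vert V})=\Hom_{\Coshv(V)}(\ZZ^{\text{cos}},(\Q_{\vert U})_{\vert V})=\shf(\Q_{\vert U})(V),
\]
and one checks that the restriction maps on both sides are induced by the same morphism $\ZZ_{V}^{\text{cos}}\to \ZZ_{W}^{\text{cos}}$ inside $\Coshv(U)$.

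For (2), I would combine the explicit formula with the Hom identity for constant cosheaves from Examples \ref{ejems-cosheaves}(1):
\[
\shf(G^{\text{cos}})(U)=\Hom_{\Coshv(U)}(\ZZ^{\text{cos}},G^{\text{cos}})=\Hom_\ZZ(\ZZ^{\text{cos}}(U),G)=\Hom_\ZZ(\ZZ^{\oplus\pi_0(U)},G)=G^{\pi_0(U)}.
\]
Since $X$ is locally connected, the constant sheaf $\pi_X^{-1}G$ has exactly $G^{\pi_0(U)}$ as its sections on $U$, with restriction maps governed by the maps $\pi_0(V)\to\pi_0(U)$ on connected components. The sheaf $\shf(G^{\text{cos}})$ has the same sections and, by inspection, the same restriction maps, so the two sheaves coincide.

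For (3), I would argue openwise. Kernels in $\Shv(X)$ are computed on sections over each open $U$, so it suffices to show that $\Q\mapsto \shf(\Q)(U)$ is left exact on cosheaves. Given $0\to\Q'\to\Q\to\Q''$ exact on every open, restriction to $U$ preserves exactness (it just restricts the domain of opens to those inside $U$), and then applying the left-exact functor $\Hom_{\Coshv(U)}(\ZZ^{\text{cos}},-)$ gives the required exact sequence of sections. Alternatively, one may note that $\shf$ is a right adjoint, hence preserves all limits, and combine this with the fact that kernels of sheaves are detected on sections. The only point demanding a touch of care is part (2), where the identification of $G^{\pi_0(U)}$ with the sections of the constant sheaf relies on local connectedness of $X$; everything else is a formal manipulation of the adjunction.
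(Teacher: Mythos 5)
Your proof is correct and follows essentially the same route as the paper: part (2) is the identical computation via $\Hom_{\Coshv(U)}(\ZZ^{\text{cos}},G^{\text{cos}})=\Hom_\ZZ(\ZZ^{\text{cos}}(U),G)=G^{\pi_0(U)}$, while for (1) and (3) you work sectionwise from the explicit description $\shf(\Q)(U)=\Hom_{\Coshv(U)}(\ZZ^{\text{cos}},\Q_{\vert U})$, which is just the adjunction that the paper invokes in one line. The only point to keep in mind is that ``left exactness'' of $\Hom_{\Coshv(U)}(\ZZ^{\text{cos}},-)$ must be checked against the sectionwise notion of exactness (the category of cosheaves need not have well-behaved kernels), but that verification is immediate and is no less detailed than the paper's own ``follows from adjunction.''
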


\begin{proof} (1) follows from adjunction and (2) of Proposition \ref{cos-properties}.


(2)  For any open subset $U$ one has: \[ 
\shf(G^{\text{cos}})(U)= \Hom_{\Coshv(U)}(\ZZ^{\text{cos}}, G^{\text{cos}})   =\Hom_\ZZ( \ZZ^{\text{cos}}(U),G)   = \prod_{i\in\pi_0(U)}G =G(U). \]

(3) follows from adjunction.
\end{proof}

\begin{rem} The functors $\co \colon\Shv(X)\to\Coshv(X)$, $\shf\colon\Coshv(X)\to \Shv(X)$ do  not yield an equivalence between the categories of sheaves and cosheaves. However, they produce an equivalence between the categories of locally constant sheaves and locally constant cosheaves. Indeed, for any locally constant sheaf $L$ (resp. any locally constant cosheaf $\Lc$) the natural morphism $L\to \shf(\co(L))$ (resp., the natural morphism $\co(\shf(\Lc))\to\Lc$) is an isomorphism because it is so after restricting to any open subset $U$ such that $L_{\vert U}$ is constant (resp. such that $\Lc_{\vert U}$ is constant).
\end{rem} 

Finally, to emphasize the symmetry between the functors $\co$ and $\shf$, we leave the reader to check the following:

\begin{rem} Let $j\colon U\hookrightarrow X$ be an open subset. For any sheaf $F$ and cosheaf $\Q$  one has:
\[ \aligned  \Hom_\ZZ(\co (F)(U),G)&=\Hom_{\Shv(X)}(F, j_*G)\\ \Hom_\ZZ(G,\shf(Q)(U))&=\Hom_{\Coshv(X)}(j_!G^{\text{cos}},Q)\endaligned.\]
\end{rem}
 
\bigskip
PROBLEM: Let $G$ be an abelian group and let us denote $H_0(X,G)=G^{\pi_0(X)}$. We want to derive this functor (by the left). We can take two directions. One is to think $H_0(X,G)$ as $G^{\text{cos}}(X)$, hence as the functor $\Q\mapsto \Q(X)$ on cosheaves applied to the constant cosheaf; then, we are lead to derive the functor $\Q\mapsto \Q(X)$. This direction finds serious difficulties related to the pathologies of the category of cosheaves. The main difficulty is that there is not a cosheafication procedure of a precosheaf (in an analogous way to the sheafication of a presheaf), and then, there is not a good definition of the  kernel (cosheaf) of a morphism of cosheaves; thus the category of cosheaves is not abelian. To overcome this difficulty one is lead to work with cosheaves of abelian pro-groups (see \cite{Prasolov}, \cite{Prasolov2}), and things become more difficult and technical. To our knowledge, there is still not a theory of homology of cosheaves avaialable.

A second direction is to think $H_0(X,G)$ as the cosections of the constant sheaf, i.e. $\La(X,G)$. This leads to derive the functor on sheaves $F\mapsto \La(X,F)$. The problem here is that the category $\Shv(X)$ has not enough projectives. If $X$ is a finite topological space (more generally, an Alexandroff space), the category $\Shv(X)$ has enough projectives and the left derived functor $\LL(X,\quad) $ of $\La(X,\quad)$ is left adjoint of $\pi_X^{-1}\colon D(\Abe)\to D(X)$; thus, the simplest idea is to define  (for a general topological space) $\LL(X,\quad) $ as the left adjoint of $\pi_X^{-1}$. The existence of this adjoint needs some additional local hypothesis on $X$, but do include, besides Alexandroff spaces,  the ``most common'' topological spaces .

\section{Homology}
 
\subsection{Notations}  For this section we shall use an extensive use of basic results on the (unbounded) derived category of sheaves on a topological space, which may be found in \cite{Spaltenstein}. Throughout the paper an injective or flat complex means a $K$-injective or $K$-flat complex in the sense of  \cite{Spaltenstein}.
 
 For any topological space $X$,  $D(X)$ denotes the  (unbounded) derived category of sheaves on $X$. If $X$ is a point, we obtain the derived category of abelian groups, denoted by $D(\ZZ)$. 
 For any $F\in D(X)$ and any open subset $U$, $\RR\Gamma(U,F)$ denotes the left derived functor of  sections on $U$.
 For any complexes of sheaves $F,F'$ on $X$, $\Hom^\pun(F,F')$ denotes the complex of homomorphisms and $\RR\Hom^\pun(F,F')$ its left derived functor. We denote by $\HHom^\pun(F,F')$ the complex of sheaves of homomorphisms and $\RR\HHom^\pun(F,F')$ its left derived functor. We denote $F\overset\LL\otimes F'$ the derived tensor product of $F$ and $F'$. One has natural isomorphisms
 \[ \aligned \RR\Hom^\pun(\ZZ_U,F)&=\RR\Gamma(U,F)\\ \RR\Gamma(U,\RR\HHom^\pun(F,F'))&=\RR\Hom^\pun(F_{\vert U}, F'_{\vert U})\\  \RR\Hom^\pun(F\overset\LL\otimes F', F'')&=\RR\Hom^\pun(F,\RR\HHom^\pun(F',F'')).\endaligned\]
 
 For any complex of sheaves $F$, we denote by $F^\vee$ its derived dual: $F^\vee:=\RR\HHom^\pun(F,\ZZ)$.  
 For any continuous map $f\colon X\to Y$, one has the  inverse image functor $f^{-1}\colon D(Y)\to D(X)$ and the (derived) direct image $\RR f_*\colon D(X)\to D(Y)$, which is right adjoint of $f^{-1}$.
 For any open subset $j\colon U\hookrightarrow X$ 
 one has the functor $j_!\colon D(U)\to D(X)$, which is left adjoint of $j^{-1}$, and we denote $F_U:=j_!j^{-1}F$. One has an exact triangle 
 \[ F_U\to F \to F_{X-U} \] where $F_{X-U}:=i_*i^{-1}F$, with $i\colon X-U\hookrightarrow X$.
 
 For any closed subset $Y$ of $X$, $\RR\Gamma_Y(X,F)$ denotes de right derived functor of  global sections supported on $Y$ and $\RR{\underline\Gamma}_YF$ its sheafified version. One has 
 \[\RR\Gamma_Y(X,F)=\RR\Hom^\pun(\ZZ_Y,F),\qquad \RR{\underline\Gamma}_Y F=\RR\HHom^\pun(\ZZ_Y,F)\]
 
 A perfect complex is a complex $\E$ which is locally isomorphic to a bounded complex 
 \[0\to G^{p}\to\cdots\to G^{q}\to 0\]
 of  constant sheaves, such that each $G^i$ is a finite and free $\ZZ$-module. In other words, for each point $x\in X$, there is a neighbourhood $U$ of $X$ and an isomorphism $\E_{\vert U}\simeq \pi_U^{-1}E$, where $E$ is a finite complex of finite and free $\ZZ$-modules. A complex $E$ of abelian groups is perfect if and only if is bounded (i.e. has bounded cohomology) and $H^i(E)$ is finitely generated for any $i$.  Finally, we shall use that an abelian group $G$ is zero if an only if $\Hom_\ZZ(G,\QQ/\ZZ)=0$ and a complex $E$ of abelian groups is $0$ (in the derived category) if and only if $E^\vee=0$ (see \cite[Ch.V, Prop. 13.7]{Bredon}); in particular, a morphism $E_1\to E_2$ in $D(\ZZ)$ is an isomorphism if and only if its dual $E_2^\vee\to E_1^\vee$ is so.

\subsection{Homology}

\begin{defn}\label{clc} We say that $X$ is {\em cohomologically trivial} if $H^i(X,G)=0$ for any $i>0$ and any abelian group $G$. Any homotopically trival space is cohomologically trivial. Any integral scheme is cohomologically trivial.  
\end{defn}

Let $D(\ZZ)$ be the derived category of abelian groups and $D(X)$ the derived category of sheaves  on $X$ (notice: we are considering the {\em unbounded} derived category, i.e., unbounded complexes). The category $D(X)$ has direct products, but it is not the direct product of complexes. Given a family of complexes $K_i$, we shall denote $\prod_iK_i$ the direct product complex and $\prod_i^{D}K_i$ the direct product in $D(X)$. If $K_i\to I_i$ are injective resolutions, then
$$ {\prod}_i^{D }K_i:= \prod_i I_i.$$

This pathology does not occur in $D(\ZZ)$; that is, for any collection $G_i$ of objects in $D(\ZZ)$, one has that $\prod_iG_i=\prod^D_iG_i$.
 
\begin{thm}\label{derivedproduct} Assume that $X$ is locally connected and locally cohomologically trivial.  The functor $\pi_X^{-1}\colon D(\ZZ)\to D(X)$ has a left adjoint
\[ \LL(X,\quad)\colon D(X)\to D(\ZZ).\] Thus, for any $F\in D(X)$ and any $G\in D(\ZZ)$ one has
\[ \Hom_{D(\ZZ)}(\LL(X,F),G)=\Hom_{D(X)}(F,\pi_X^{-1}G).\]
\end{thm}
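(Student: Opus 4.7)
The approach is to apply Brown representability for the dual to the compactly generated triangulated category $D(\ZZ)$: any covariant cohomological functor $H\colon D(\ZZ)\to\Ab$ that preserves products is corepresentable. For each $F\in D(X)$ the candidate functor
\[
H_F\colon G\mapsto \Hom_{D(X)}(F,\pi_X^{-1}G)
\]
is automatically cohomological because $\pi_X^{-1}$ is triangulated, and $\Hom_{D(X)}(F,-)$ already preserves products; so the task reduces to showing that $\pi_X^{-1}\colon D(\ZZ)\to D(X)$ itself preserves (derived) products. Concretely, I need to prove that the natural morphism
\[
\pi_X^{-1}\Bigl(\prod_i G_i\Bigr)\longrightarrow \prod_i{}^{D}\pi_X^{-1}(G_i)
\]
is an isomorphism in $D(X)$ for every family $\{G_i\}$ in $D(\ZZ)$; granting this, the object $\LL(X,F)$ corepresenting $H_F$ furnished by Brown representability is the desired left adjoint.

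By Proposition \ref{directprodconstant}, applied termwise, $\pi_X^{-1}(\prod_i G_i)$ coincides with the naive product complex $\prod_i\pi_X^{-1}(G_i)$ at the level of $\text{Cpx}(\Shv X)$, so the question becomes whether this naive product of constant-sheaf complexes already computes the derived product in $D(X)$. I plan to check this on a generating family. By local connectedness and local cohomological triviality, the connected cohomologically trivial open subsets form a basis $\mathcal{B}$, and $\{\ZZ_U\}_{U\in\mathcal{B}}$ generates $D(X)$: a complex $K$ with $\RR\Gamma(U,K)=0$ for every $U\in\mathcal{B}$ has vanishing stalks, because stalks are computed as filtered colimits of $H^i(U,K)$ along $U\in\mathcal{B}$. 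Now $\RR\Gamma(U,-)$ preserves products as a right adjoint, and the connectedness and cohomological triviality of $U$ force $\RR\Gamma(U,\pi_X^{-1}G_i)=G_i$ in $D(\ZZ)$; consequently both $\RR\Gamma(U,\prod_i{}^{D}\pi_X^{-1}G_i)$ and $\RR\Gamma(U,\pi_X^{-1}\prod_iG_i)$ compute $\prod_iG_i$, with the comparison map equal to the identity. This yields the required isomorphism in $D(X)$ after testing against all generators.

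The main subtlety is the identification $\RR\Gamma(U,\pi_X^{-1}G)=G$ for an unbounded $G\in D(\ZZ)$, since the hypercohomology spectral sequence does not converge automatically. I expect to handle this via the Spaltenstein $K$-injective framework recalled in the Notations subsection: on a cohomologically trivial $U$ every constant sheaf is $\Gamma(U,-)$-acyclic, and with connectedness one should be able to show that a $K$-injective resolution of $\pi_X^{-1}G$ recovers $G$ upon taking global sections on $U$, exactly as in the bounded case. Once this local identification is in hand, assembling it with the generating-family argument above produces the left adjoint $\LL(X,-)$.
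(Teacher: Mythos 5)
Your overall frame is the same as the paper's: invoke Brown representability for the dual, reduce the existence of $\LL(X,\quad)$ to showing that $\pi_X^{-1}$ preserves derived products, and test the comparison map against sections on the basis of connected, cohomologically trivial open subsets. However, the step you defer as ``the main subtlety'' --- the identification $\RR\Gamma(U,\pi_X^{-1}G)=G$ for an \emph{unbounded} $G\in D(\ZZ)$ --- is not a technicality that goes through ``exactly as in the bounded case''; it is where the whole difficulty of the theorem sits, and as stated your plan does not close. To compute $\RR\Gamma(U,\pi_X^{-1}G)$ you need a $K$-injective resolution of $\pi_X^{-1}G$ whose sections over $U$ you can control; the natural candidate is the product of bounded-below injective resolutions of the constant sheaves $\underline{H^n(G)}$, and knowing that this product is still a resolution of the naive product complex is precisely an instance of the product-preservation statement you are trying to prove (products of quasi-isomorphisms of complexes of sheaves need not be quasi-isomorphisms). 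The alternative of writing $G$ as a homotopy limit of truncations and commuting $\RR\Gamma(U,-)$ past it runs into the same convergence problem (this is exactly the known failure of hypercohomology of unbounded complexes, absent finite cohomological dimension, which is not among the hypotheses). So the proposal, as written, is circular at its key point, or at best leaves it unproven.

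The missing idea --- and the paper's key move --- is to use that $\ZZ$ is hereditary, so every $G\in D(\ZZ)$ splits as $G\cong\prod_n H^n(G)[-n]$; this allows one to assume from the start that each $G_i$ is a single abelian group placed in one degree, and the unbounded issue never arises. Then $\pi_X^{-1}G_i$ is a sheaf with a bounded-below injective resolution $I_i$, cohomological triviality of a connected open $U$ in the basis gives that $\Gamma(U,\pi_X^{-1}G_i)\to\Gamma(U,I_i)$ is a quasi-isomorphism, exactness of products in $\Mod(\ZZ)$ gives that $\Gamma(U,\prod_i\pi_X^{-1}G_i)\to\Gamma(U,\prod_iI_i)$ is a quasi-isomorphism, and passing to filtered colimits over the basis shows that $\prod_i\pi_X^{-1}G_i\to\prod_iI_i$ is a stalkwise quasi-isomorphism; combined with Proposition \ref{directprodconstant} (local connectedness), this identifies $\pi_X^{-1}(\prod_iG_i)$ with the derived product $\prod_i^{D}\pi_X^{-1}G_i$. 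If you insert this reduction before your basis argument, your proof closes; without it, the local identification for unbounded $G$ that you assume is essentially equivalent to the theorem itself (restricted to $U$), since it already encodes that a naive product of constant-sheaf resolutions computes the derived product.
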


\begin{proof} Let us denote $\pi=\pi_X$. By Brown representability for the dual (see \cite{Neeman2},\cite{Neeman3}, \cite{Krause}) it suffices to see that  $\pi^{-1}\colon D(\ZZ)\to D(X)$ commutes with direct products, i.e.:  For any collection   $\{ G_i\}_{i\in I}$, $G_i\in D(\ZZ)$, the natural morphism
\[\pi^{-1}(\prod_iG_i) \to {\prod}_i^{D }\pi^{-1}G_i\] is an isomorphism (in $D(X)$). Since any object  $G\in D(\ZZ)$ is isomorphic to $\prod_n H^n(G)[-n]$, we may assume that $G_i$ is an abelian group (in some degree). Now,  if $\pi^{-1}G_i\to I_i$ is an injective resolution of $\pi^{-1}G_i$, we have to prove that the morphism of complexes
\[ \prod_i\pi^{-1}G_i\to\prod_i I_i\] is a quasi-isomorphism. For any connected and cohomologically trivial open subset $U$
\[ \prod_i\Gamma(U,\pi^{-1}G_i)= \Gamma(U, \prod_i\pi^{-1}G_i)   \to \Gamma(U,\prod_i I_i)=\prod_i\Gamma(U,I_i)\] is a quasi-isomorphism because $\Gamma(U,\pi^{-1}G_i)\to \Gamma(U,I_i)$ is so.
\end{proof}

\begin{rem}\label{loc-compact} On locally compact Haussdorff spaces the locally cohomological trivial condition can be weakened by a {\em cohomologically locally connected} condition  (see \cite{Bredon})). This condition means that for each point $x\in X$ and neighbourhood $U$ of $x$, there is a neighbourhood $V\subset U$ of $x$ such that $H^n(U,\ZZ)\to H^n(V,\ZZ)$ is null for any $n>0$. Then, by universal coefficients theorem, one has that $H^n(U,G)\to H^n(V,G)$ is also null for any abelian group $G$. Hence, for any family of abelian groups $G_i$ one has that
\[ \ilim{x\in U}\prod_i H^n(U,G_i) =0\] for any $n>0$, and then (under locally connected hypothesis) one concludes that $\prod_i\pi^{-1}G_i\to\prod_i I_i$ is a quasi-isomorphism, and Theorem \ref{derivedproduct} still holds.
\end{rem}

\begin{rem} For the rest of the paper we shall assume that  the topological spaces involved satisfy that $\pi^{-1}$ has a left adjoint, i.e., that $\LL(\,\, ,\,\, )$ exists, since, for most of the results, the local hypothesis of Theorem \ref{derivedproduct} or Remark \ref{loc-compact} will not play any role. When they do, it will be explicitly stated.
\end{rem}

\begin{defn} For any complex $K$, we shall denote $H_i(K):=H^{-i}(K)$. For any $F\in D(X)$ we shall denote
\[ H_i(X,F):=H_i[\LL(X,F)]\] and call it the {\em $i$-th homology group with coefficients in $F$}.
\end{defn}

\begin{rem} Since $\pi_X^{-1}$ is exact, $\LL(X,\quad)$ is exact too (see \cite{Neeman2}). Thus, for any exact triangle
\[ F\to F'\to F'' \] one has an exact triangle
\[ \LL(X,F)\to \LL(X,F')\to \LL(X, F'') \] and then a long exact sequence of homology groups
\[ \cdots \to H_i(X,F)\to H_i(X,F')\to H_i(X,F'')\to  \cdots.\]
Moreover, $\LL(X,\quad)$ commutes with direct sums.
\end{rem}

\medskip 
\noindent{\bf\ \ Notation.} For any $G\in D(\ZZ)$, we shall often denote $G:=\pi_X^{-1}G$ the  complex of constant sheaves on $X$.
\medskip 

\begin{rem}\label{RHom-cosections} For any $F\in D(X)$ and any $G\in D(\ZZ)$ one has:
\[\RR\Hom^\pun(\LL(X,F),G)=\RR\Hom^\pun(F, G).\]
\end{rem}

\begin{proof}
Let $\epsilon\colon F\to \pi_X^{-1}\LL(X,F)$ be the unit map. The composition
\[ \RR\Hom^\pun(\LL(X,F),G)\to\RR\Hom^\pun(\pi_X^{-1}\LL(X,F),\pi_X^{-1}G)\overset{\epsilon^*}\to \RR\Hom^\pun(F,\pi_X^{-1}G)\] is an isomorphism, since it is so after taking $H^i$.  
\end{proof}

\begin{prop} For any sheaf $F$ on $X$ one has:

{\rm (a)} $H_i(X,F)=0$ for any $i<0$.

{\rm (b)} $H_0(X,F)=\La(X,F)$.

{\rm (c)}  $H_i(X,\quad)$ commutes with direct sums.

{\rm (d)} $\LL(X,\quad)$ maps $D^{\leq n}(X)$ into $D^{\leq n}(\ZZ)$ where $D^{\leq n}$ stands for complexes with $H^i=0$ for $i>n$.
\end{prop}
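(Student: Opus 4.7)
My plan is to start with \emph{(d)} since it immediately yields \emph{(a)}, then handle \emph{(c)} via formal nonsense, and finally derive \emph{(b)} from \emph{(d)} plus Yoneda.

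For \emph{(d)}, the key input is that $\pi_X^{-1}$ is exact, and hence preserves the standard $t$-structure: if $K\in D^{>n}(\ZZ)$, then $\pi_X^{-1}K\in D^{>n}(X)$. Given $F\in D^{\leq n}(X)$ and any such $K$, the adjunction of Theorem \ref{derivedproduct} gives
\[ \Hom_{D(\ZZ)}(\LL(X,F),K) \;=\; \Hom_{D(X)}(F,\pi_X^{-1}K) \;=\; 0, \]
since $\Hom$ between complexes concentrated in strictly separated degrees vanishes. Because this holds for every $K\in D^{>n}(\ZZ)$, the standard $t$-structure argument applies: if $\LL(X,F)$ had a nonzero cohomology group $H^m$ for some $m>n$, then composing the truncation maps produces a nonzero morphism $\LL(X,F)\to H^m[-m]$ with $H^m[-m]\in D^{>n}(\ZZ)$, a contradiction. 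Hence $\LL(X,F)\in D^{\leq n}(\ZZ)$.

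Statement \emph{(a)} is then immediate: a sheaf $F$, viewed as a complex concentrated in degree $0$, lies in $D^{\leq 0}(X)$, so $\LL(X,F)\in D^{\leq 0}(\ZZ)$, meaning $H_i(X,F)=H^{-i}(\LL(X,F))=0$ for $i<0$. For \emph{(c)}, the functor $\LL(X,\quad)$ is a left adjoint (Theorem \ref{derivedproduct}) and hence commutes with arbitrary coproducts; since $H^{-i}$ on $D(\ZZ)$ also commutes with direct sums, so does $H_i(X,\quad)$.

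For \emph{(b)}, note that by \emph{(d)} with $n=0$ we have $\LL(X,F)\in D^{\leq 0}(\ZZ)$, so for any abelian group $G$ (placed in degree $0$) the truncation gives
\[ \Hom_{D(\ZZ)}(\LL(X,F),G) \;=\; \Hom_{\ZZ}(H^0\LL(X,F),G) \;=\; \Hom_{\ZZ}(H_0(X,F),G). \]
On the other hand, since both $F$ and $\pi_X^{-1}G$ are sheaves, the adjunction of Theorem \ref{derivedproduct} combined with the adjunction defining $\La(X,\quad)$ yields
\[ \Hom_{D(\ZZ)}(\LL(X,F),G) \;=\; \Hom_{D(X)}(F,\pi_X^{-1}G) \;=\; \Hom_{\Shv(X)}(F,\pi_X^{-1}G) \;=\; \Hom_{\ZZ}(\La(X,F),G). \]
These isomorphisms are natural in $G$, so Yoneda gives $H_0(X,F)\cong \La(X,F)$. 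The only subtlety is the standard $t$-structure step in the proof of \emph{(d)}, but it is entirely formal; the rest is pure adjunction bookkeeping.
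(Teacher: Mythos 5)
Your proof is correct. Parts (b) and (c) are essentially the paper's own argument: (c) is the formal fact that a left adjoint preserves coproducts, and (b) is the same adjunction-plus-Yoneda computation, with your truncation step $\Hom_{D(\ZZ)}(\LL(X,F),G)=\Hom_\ZZ(H^0\LL(X,F),G)$ playing the role of the paper's identification of $H^0\RR\Hom^\pun(\LL(X,F),G)$. Where you genuinely diverge is (a) and (d): the paper proves (a) directly by dualizing into $\QQ/\ZZ$, using the detection principle that an abelian group $G$ vanishes iff $\Hom_\ZZ(G,\QQ/\ZZ)=0$ together with Remark \ref{RHom-cosections}, and it merely asserts (d) as an easy consequence left to the reader; you instead prove (d) first by pure $t$-structure orthogonality --- exactness of $\pi_X^{-1}$ preserves $D^{>n}$, the adjunction of Theorem \ref{derivedproduct} kills all maps to $D^{>n}(\ZZ)$, and the nonzero truncation map $\LL(X,F)\to H^m[-m]$ gives the contradiction --- and then read off (a) as the case $n=0$. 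Your route is slightly more economical and self-contained (no injective cogenerator, no need of Remark \ref{RHom-cosections}, and (d) is actually proved rather than delegated), while the paper's $\QQ/\ZZ$-dualization previews the detection technique it reuses repeatedly later (e.g., in the second Mayer--Vietoris sequence and the Poincar\'e--Verdier comparison), which is presumably why it is the argument of choice there. All the individual steps you invoke (vanishing of $\Hom$ between strictly separated degrees, nonvanishing of the composite truncation map, $\Hom$ in the derived category between objects of the heart being $\Hom$ in the abelian category, naturality in $G$) are standard and correctly applied, so there is no gap.
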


\begin{proof} (a) It suffices to see that $\Hom_\ZZ(H_i(X,F),\QQ/\ZZ)=0$ for $i<0$. Now $$\Hom_\ZZ(H_i(X,F),\QQ/\ZZ) = H^i\Hom^\pun(\LL(X,F),\QQ/\ZZ)= H^i\RR\Hom^\pun(F,\QQ/\ZZ)\overset{\ref{RHom-cosections}} = \Ext^i(F,\QQ/\ZZ),$$  which is $0$ for $i<0$. 

(b) Since $H_i(X,F)=0$ for $i>0$, for any abelian group $G$ one has  $$\aligned \Hom_\ZZ(H_0(X,F),G)& = H^0\RR\Hom^\pun(\LL(X,F),G)  =  H^0\RR\Hom^\pun(F,G)\\ & = \Hom_{\Shv}(F,G)=\Hom_\ZZ(\La(X,F),G).\endaligned$$

(c) This is immediate, since $\LL(X,\quad)$ commutes with direct sums.  

(d) It is an easy consequence of Remark \ref{RHom-cosections} and is left to the reader. We shall not make use of it.
\end{proof}

\begin{thm}[Duality between homology and cohomology]\label{homology-cohomology} For any $F\in D(X)$ and any $G\in D(\ZZ)$ one has:
\[\RR\Hom^\pun(\LL(X,F),G) =\RR\Gamma(X,\RR\HHom^\pun(F,G)),\]
 In particular,
\[ \RR\Hom^\pun(\LL(X,\ZZ),G) =\RR\Gamma(X,G)\] and
\[ \LL(X,F)^\vee=\RR\Gamma(X,F^\vee).\] where $(\underline\quad)^\vee$ stands for the derived dual $\RR\HHom^\pun(\underline\quad,\ZZ)$. Then one has the (split) exact sequence
\[ 0\to\Ext^1_\ZZ(H_{i-1}(X,F),\QQ/\ZZ)\to H^i(X,F^\vee)\to\Hom_\ZZ(H_i(X,F),\ZZ)\to 0.\]
\end{thm}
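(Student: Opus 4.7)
My plan is to first establish the key identity
\[
\RR\Hom^\pun(\LL(X,F),G) = \RR\Gamma(X,\RR\HHom^\pun(F,G))
\]
by chaining two inputs that are already available. The first is Remark \ref{RHom-cosections}, which upgrades the adjunction $\LL(X,\quad)\dashv\pi_X^{-1}$ to the level of $\RR\Hom^\pun$, giving
\[
\RR\Hom^\pun(\LL(X,F),G) = \RR\Hom^\pun(F,\pi_X^{-1}G).
\]
The second is the standard identity $\RR\Gamma(U,\RR\HHom^\pun(F,F'))=\RR\Hom^\pun(F_{\vert U},F'_{\vert U})$ listed in the Notations, specialized to $U=X$ and $F'=\pi_X^{-1}G$. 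Concatenation, together with the convention $\pi_X^{-1}G = G$, yields the stated formula.

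The two displayed specializations are then substitutions. Setting $F=\ZZ$ (the constant sheaf) collapses $\RR\HHom^\pun(\ZZ,G)$ to $G$, producing $\RR\Hom^\pun(\LL(X,\ZZ),G)=\RR\Gamma(X,G)$. Setting $G=\ZZ$ turns the right-hand side into $\RR\Gamma(X,\RR\HHom^\pun(F,\ZZ))=\RR\Gamma(X,F^\vee)$ by definition of $F^\vee$, while the left-hand side is by definition the derived dual $\RR\Hom^\pun(\LL(X,F),\ZZ)=\LL(X,F)^\vee$.

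For the short exact sequence, I take $H^i$ of the isomorphism $\LL(X,F)^\vee\simeq\RR\Gamma(X,F^\vee)$. The left side computes $\Ext^i_\ZZ(\LL(X,F),\ZZ)$, so after replacing $\LL(X,F)$ by a quasi-isomorphic complex $P_\bullet$ of free abelian groups, the classical universal coefficients theorem applied to $P_\bullet$ with coefficients in $\ZZ$ gives a natural short exact sequence
\[
0\to\Ext^1_\ZZ(H_{i-1}(X,F),\ZZ)\to H^i(X,F^\vee)\to\Hom_\ZZ(H_i(X,F),\ZZ)\to 0,
\]
which splits because $\ZZ$ is a PID (so the subgroups of cycles and boundaries inside each $P_n$ are free and the short exact sequences $0\to B_n\to Z_n\to H_n(P_\bullet)\to 0$ split).

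There is essentially no obstacle: the theorem is a formal consequence of the adjunction from Theorem \ref{derivedproduct}, the internal-Hom identity already recorded in the Notations, and the classical universal coefficients theorem. The only discipline required is keeping the two meanings of $G$ (an object of $D(\ZZ)$ versus its associated constant sheaf $\pi_X^{-1}G$) straight, and tacitly extending the notation $(\underline{\quad})^\vee$ from $\RR\HHom^\pun(-,\ZZ)$ on $D(X)$ to $\RR\Hom^\pun(-,\ZZ)$ on $D(\ZZ)$.
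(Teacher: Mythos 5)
Your argument is essentially the paper's own: the main identity is obtained by concatenating Remark \ref{RHom-cosections} with the identity $\RR\Gamma(X,\RR\HHom^\pun(F,G))=\RR\Hom^\pun(F,G)$ from the Notations, and the two displayed formulas are the specializations $F=\ZZ$ and $G=\ZZ$, exactly as in the paper (the paper leaves the universal-coefficients sequence implicit, which you spell out via a K-projective complex of free abelian groups). Two small remarks: your Ext-term $\Ext^1_\ZZ(H_{i-1}(X,F),\ZZ)$ is in fact the correct one (the paper's printed $\Ext^1_\ZZ(H_{i-1}(X,F),\QQ/\ZZ)$ must be a slip, since $\Ext^1$ into a divisible group vanishes), and your justification of the splitting should invoke the splitting of $0\to Z_n\to P_n\to B_{n-1}\to 0$ (because $B_{n-1}$ is free), not of $0\to B_n\to Z_n\to H_n(P_\bullet)\to 0$, which fails to split whenever $H_n(P_\bullet)$ has torsion.
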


\begin{proof} In fact, $\RR\Hom^\pun(\LL(X,F),G)=\RR\Hom^\pun(F,G)=\Gamma(X, \RR\HHom^\pun(F,G)).$  If $F=\ZZ$, on obtains $\RR\Hom^\pun(\LL(X,\ZZ),G) =\RR\Gamma(X,\RR\HHom^\pun(\ZZ,G)) =  \RR\Gamma(X, G)$. The last formula is just to take $G=\ZZ$.
\end{proof}

\begin{coro}\label{coh-trivial} $X$ is cohomologically trivial if and only if $H_i(X,\ZZ)=0$ for any $i>0$.
\end{coro}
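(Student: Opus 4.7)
The plan is to invoke the duality $\RR\Hom^\pun(\LL(X,\ZZ),G)=\RR\Gamma(X,G)$ from Theorem \ref{homology-cohomology} and to test it against two particular choices of $G$: the injective cogenerator $\QQ/\ZZ$ in one direction, and an arbitrary abelian group in the other.

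For the ``only if'' implication, assume $X$ is cohomologically trivial and apply the duality with $G=\QQ/\ZZ$. Since $\QQ/\ZZ$ is injective, the universal-coefficient spectral sequence $E_2^{p,q}=\Ext^p_\ZZ(H^{-q}(C),\QQ/\ZZ)\Rightarrow H^{p+q}\RR\Hom^\pun(C,\QQ/\ZZ)$ applied to $C=\LL(X,\ZZ)$ collapses onto the $p=0$ line, yielding
$$H^i(X,\QQ/\ZZ)=\Hom_\ZZ(H_i(X,\ZZ),\QQ/\ZZ).$$
By hypothesis the left-hand side vanishes for $i>0$, so the criterion recalled in the Notations (``$A=0$ iff $\Hom_\ZZ(A,\QQ/\ZZ)=0$'') forces $H_i(X,\ZZ)=0$ for every $i>0$.

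For the converse, assume $H_i(X,\ZZ)=0$ for all $i>0$. Combining this with part (a) of the previous proposition applied to the sheaf $\ZZ$ (which gives $H_i(X,\ZZ)=0$ also for $i<0$), the complex $\LL(X,\ZZ)$ has a single nonzero cohomology group, namely $H_0(X,\ZZ)=\La(X,\ZZ)=\ZZ^{\oplus\pi_0(X)}$ in degree zero. This is a \emph{free}, hence projective, abelian group, so for any abelian group $G$ the duality delivers
$$H^i(X,G)=\Ext^i_\ZZ\bigl(\ZZ^{\oplus\pi_0(X)},G\bigr)=0\quad\text{for }i>0,$$
proving cohomological triviality. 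The only delicate point is to keep straight the convention $H_i=H^{-i}$ so as to see that the two vanishing conditions (the hypothesis and part (a)) together collapse $\LL(X,\ZZ)$ to a free abelian group concentrated in degree zero; once that is visible, the rest is formal.
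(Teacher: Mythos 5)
Your proof is correct and follows essentially the same route as the paper: both rest on the duality $\RR\Hom^\pun(\LL(X,\ZZ),G)=\RR\Gamma(X,G)$ of Theorem \ref{homology-cohomology}, testing against $G=\QQ/\ZZ$ in one direction and using that $\LL(X,\ZZ)$ collapses to $\La(X,\ZZ)$ in the other. The only difference is that you make explicit two points the paper leaves implicit, namely the freeness of $\La(X,\ZZ)=\ZZ^{\oplus\pi_0(X)}$ and the collapse of the $\Ext$ spectral sequence for the injective cogenerator, which is a welcome clarification rather than a deviation.
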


\begin{proof} $X$ is cohomologically trivial if and only if $\RR\Gamma(X,G)=\Gamma(X,G)$ for any abelian group  $G$. Since $\RR\Gamma(X,G)=\RR\Hom^\pun (\LL(X,\ZZ),G)$, this is equivalent to say that  $\LL(X,\ZZ)=\La(X,\ZZ)$, i.e., $H_i(X,\ZZ)=0$ for any $i>0$.
\end{proof}

\begin{prop}\label{inmersiones} Let  $j\colon U\hookrightarrow X$ be an open subset, $Y\hookrightarrow X$ a closed subset and $G\in D(\ZZ)$. 

\begin{enumerate}\label{inmcerrada}  \item For any $F\in D(U)$ one has
\[ \LL(U,F)=\LL(X,j_!F).\] In particular, for any $F\in D(X)$ one has
\[ \LL(U,F_{\vert U})=\LL(X,F_U).\]
\item For any $F\in D(X)$ one has
\[ \RR\Hom^\pun(\LL(Y,F_{\vert Y}),G)=\RR\Hom^\pun(F,G_Y).\]
\item Let $S=Y\cap U$. For any $F\in D(X)$ one has
\[\RR\Hom^\pun (\LL(S,F_{\vert S}),G)=\RR\Hom^\pun(F_U,G_Y).\]
\end{enumerate}
\end{prop}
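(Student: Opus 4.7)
The plan is to use the defining adjunction $\LL(X,\cdot)\dashv \pi_X^{-1}$ of Theorem \ref{derivedproduct} together with the classical sheaf-theoretic adjunctions $(j_!,j^{-1})$ for an open inclusion $j$ and $(i^{-1},i_*)$ for a closed inclusion $i$. Both of the latter pairs pass to the unbounded derived category, since $j_!$ and $i_*$ are exact; and each such $\Hom$-level adjunction automatically upgrades to an $\RR\Hom^\pun$-level adjunction by applying it with $G$ replaced by $G[n]$ and identifying $H^n$, because every functor involved is triangulated.

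For (1), factor $\pi_U = \pi_X\circ j$, so $\pi_U^{-1} = j^{-1}\pi_X^{-1}$. Then for $F\in D(U)$ and $G\in D(\ZZ)$,
\[ \Hom_{D(X)}(j_!F,\pi_X^{-1}G) \;=\; \Hom_{D(U)}(F, j^{-1}\pi_X^{-1}G) \;=\; \Hom_{D(U)}(F,\pi_U^{-1}G), \]
so $\LL(X,j_!F)$ and $\LL(U,F)$ corepresent the same functor on $D(\ZZ)$ and are canonically isomorphic by Yoneda; the special case then follows from $F_U = j_!\,F_{\vert U}$. For (2), with $i\colon Y\hookrightarrow X$ closed, chain the defining adjunction of $\LL(Y,\cdot)$ with $(i^{-1},i_*)$:
\[ \RR\Hom^\pun(\LL(Y,F_{\vert Y}),G) \;=\; \RR\Hom^\pun_{D(Y)}(i^{-1}F,\pi_Y^{-1}G) \;=\; \RR\Hom^\pun_{D(X)}(F, i_*\pi_Y^{-1}G), \]
and observe that $i_*\pi_Y^{-1}G = i_*i^{-1}\pi_X^{-1}G = G_Y$ by the paper's convention.

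For (3), decompose the inclusion $k\colon S\hookrightarrow X$ through the Cartesian square
\[ \xymatrix{ S\ar[r]^{i'}\ar[d]_{j'} & U\ar[d]^{j} \\ Y\ar[r]^{i} & X, } \]
with $i,i'$ closed and $j,j'$ open, and use the base-change identity $i^{-1}j_! = j'_!(i')^{-1}$, verified stalkwise at the sheaf level (and passing to $D$ because all four functors are exact). Starting from the right-hand side,
\[ \RR\Hom^\pun(F_U,G_Y) \;=\; \RR\Hom^\pun\bigl(j_!F_{\vert U},\; i_*i^{-1}\pi_X^{-1}G\bigr), \]
successive application of $(i^{-1},i_*)$, the base-change identity, and $(j'_!,(j')^{-1})$ combined with $(j')^{-1}\pi_Y^{-1}=\pi_S^{-1}$ rewrites this as $\RR\Hom^\pun_{D(S)}(F_{\vert S},\pi_S^{-1}G)$, which equals $\RR\Hom^\pun(\LL(S,F_{\vert S}),G)$ by the defining adjunction of $\LL(S,\cdot)$. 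The only mild obstacle is the base-change identity; it is a classical stalkwise verification and nothing deeper is needed.
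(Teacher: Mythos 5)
Your proof is correct. Parts (1) and (2) are exactly the paper's argument: the factorization $\pi_U^{-1}=j^{-1}\pi_X^{-1}$ plus the $(j_!,j^{-1})$ adjunction for (1), and the chain through $(i^{-1},i_*)$ with the identification $i_*\pi_Y^{-1}G=G_Y$ for (2). Note that the upgrade from the $\Hom$-level adjunction of $\LL$ to the $\RR\Hom^\pun$-level statement, which you re-derive by the shift-and-take-$H^n$ device, is precisely Remark \ref{RHom-cosections} of the paper (one needs the natural comparison map coming from the unit $\epsilon\colon F\to\pi^{-1}\LL(X,F)$, not merely degreewise isomorphisms, and that is what the remark supplies), so you could simply cite it. In part (3) you take a slightly different route: you start from $\RR\Hom^\pun(F_U,G_Y)$ and unwind it through the Cartesian square, using the open--closed base-change identity $i^{-1}j_!=j'_!(i')^{-1}$, whereas the paper applies its part (2) to the closed inclusion $S\hookrightarrow U$ inside $U$ and then only needs the sheaf identity $(G_{\vert U})_S=(G_Y)_{\vert U}$ together with the $(j_!,j^{-1})$ adjunction. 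The two arguments are equivalent in content --- your base-change identity and the paper's identity $(G_{\vert U})_S=(G_Y)_{\vert U}$ are the same stalkwise observation seen from the two sides of the square --- but the paper's version is marginally more economical because it recycles (2), while yours is more symmetric and makes the base-change mechanism explicit; both are complete.
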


\begin{proof} (1) The  functor $j_!\colon D(U)\to D(X)$  is  left adjoint of $j^{-1}$.  Since $\pi_U^{-1}=j^{-1}\circ \pi_X^{-1}$, we conclude that $\LL(U,\quad)=\LL(X,\quad)\circ j_!$.

(2) $\RR\Hom^\pun(\LL(Y,F_{\vert Y}),G)=\RR\Hom^\pun (F_{\vert Y}, G)=    
\RR\Hom^\pun(F,i_*G )$.

(3) By (2) applied to $S\hookrightarrow U$, one has $\RR\Hom^\pun (\LL(S,F_{\vert S}),G)=\RR\Hom^\pun(F_{\vert U} , (G_{\vert U})_S)$. Since $(G_{\vert U})_S= (G_Y)_{\vert U}$, we obtain $$\RR\Hom^\pun (\LL(S,F_{\vert S}),G)= \RR\Hom^\pun(F_{\vert U} , (G_Y)_{\vert U})=\RR\Hom^\pun(F_U,G_Y).$$
\end{proof}

%

\begin{defn} Let $F$ be a complex of sheaves on $X$. We denote by $\La(X,F)$ the complex of abelian groups
\[\cdots\to\La(X,F^n)\to\La(X,F^{n+1})\to\cdots\] and one has a natural morphism of complexes $F\to \pi_X^{-1}\La (X,F)$, hence a morphism in $D(\ZZ)$
\[ \LL(X,F)\to \La(X,F).\]
An object $F\in D(X)$ is called {\em $\La$-acyclic}  if the  morphism $\LL(X,F)\to \La (X,F)$ is an isomorphism in $D(\ZZ)$. For example, a sheaf $F$ is $\La$-acyclic  iff $H_i(X,F)=0$ for any $i>0$. It is proved in a standard way that any bounded above complex of $\La$-acyclic sheaves is $\La$-acyclic.
\end{defn}

%

\begin{prop}\label{basic-L-acyclic} An open subset $U$ is  cohomologically trivial if and only if $\ZZ_U$ is $\La$-acyclic.
\end{prop}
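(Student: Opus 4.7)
The plan is to reduce this to Corollary \ref{coh-trivial} by transferring everything from $X$ to $U$. The key observation is that both sides of the morphism $\LL(X,\ZZ_U)\to\La(X,\ZZ_U)$ can be computed intrinsically on $U$.

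First, I would invoke Proposition \ref{inmersiones}(1) applied to the constant sheaf $\ZZ$ on $U$, giving $\LL(X,\ZZ_U)=\LL(X,j_!\ZZ)=\LL(U,\ZZ)$. Similarly, Proposition \ref{F_U} yields $\La(X,\ZZ_U)=\La(U,\ZZ)$. Unwinding the construction (the morphism $F\to\pi_X^{-1}\La(X,F)$ for $F=\ZZ_U$ restricts, via the open immersion $j$, to the analogous unit on $U$), one verifies that the natural map $\LL(X,\ZZ_U)\to\La(X,\ZZ_U)$ is identified with the natural map $\LL(U,\ZZ)\to\La(U,\ZZ)$. Hence $\ZZ_U$ is $\La$-acyclic on $X$ if and only if this latter morphism is an isomorphism in $D(\ZZ)$.

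Now I would use the structural facts already established: $H_i(U,\ZZ)=0$ for $i<0$, and $H_0(U,\ZZ)=\La(U,\ZZ)$, while $\La(U,\ZZ)$ is a single abelian group sitting in degree $0$. Therefore the morphism $\LL(U,\ZZ)\to\La(U,\ZZ)$ is a quasi-isomorphism if and only if $H_i(U,\ZZ)=0$ for all $i>0$. Finally, Corollary \ref{coh-trivial} identifies this vanishing with the cohomological triviality of $U$, which closes the equivalence.

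There is no real obstacle here; the only small care needed is checking that the natural comparison map is indeed the one obtained from the identification $\LL(X,\ZZ_U)\simeq\LL(U,\ZZ)$, which is a routine adjunction verification using $\pi_U^{-1}=j^{-1}\circ\pi_X^{-1}$ together with the adjunction $(j_!,j^{-1})$.
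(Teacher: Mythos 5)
Your proof is correct and follows essentially the same route as the paper: identify $\LL(X,\ZZ_U)$ with $\LL(U,\ZZ)$ via Proposition \ref{inmersiones} and conclude with Corollary \ref{coh-trivial}. The only difference is that the paper skips your comparison-map verification by directly invoking the remark in the definition that a sheaf $F$ is $\La$-acyclic iff $H_i(X,F)=0$ for all $i>0$, which makes the transfer to $U$ purely a statement about homology groups.
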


\begin{proof} Since $H_i(X,\ZZ_U)\overset{\ref{inmersiones}}=H_i(U,\ZZ)$, the result follows from Corollary \ref{coh-trivial}.
\end{proof}

\begin{prop}\label{covariance} Let $f\colon X\to Y$ a continuous map between locally connected spaces. For any $F\in D(Y)$ one has a natural morphism
\[ f_*\colon \LL(X,f^{-1}F)\to \LL(Y,F)\] hence a morphism
\[ f_*\colon H_i(X,f^{-1}F)\to H_i(Y,F).\]

For any continuous maps $  X\overset f\to Y\overset g\to Z$ one has: $(g\circ f)_*=g_*\circ f_*$. For any $X$, ${\Id_X}_*=\Id$.

\end{prop}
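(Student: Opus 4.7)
The plan is to mimic, in the derived setting, the construction already carried out for $\La(X,-)$ earlier in the paper. The whole argument is driven by two identities: the adjunction $\LL(X,-)\dashv\pi_X^{-1}$ from Theorem \ref{derivedproduct}, and the factorization $\pi_X^{-1}=f^{-1}\circ\pi_Y^{-1}$, which holds because $\pi_X=\pi_Y\circ f$.

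First I would construct the morphism by Yoneda. For any $G\in D(\ZZ)$, the functor $f^{-1}$ gives a natural map
\[
\Hom_{D(Y)}(F,\pi_Y^{-1}G)\longrightarrow \Hom_{D(X)}(f^{-1}F,f^{-1}\pi_Y^{-1}G)=\Hom_{D(X)}(f^{-1}F,\pi_X^{-1}G).
\]
Rewriting both sides using the adjunctions of Theorem \ref{derivedproduct} applied to $Y$ and to $X$, this is
\[
\Hom_{D(\ZZ)}(\LL(Y,F),G)\longrightarrow \Hom_{D(\ZZ)}(\LL(X,f^{-1}F),G).
\]
This family of morphisms is natural in $G$, so by the (covariant) Yoneda lemma applied to $D(\ZZ)$ it is represented by a unique morphism
\[
f_*\colon \LL(X,f^{-1}F)\longrightarrow \LL(Y,F),
\]
which by construction is natural in $F$. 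Taking $H_{-i}$ yields the homology map $f_*\colon H_i(X,f^{-1}F)\to H_i(Y,F)$.

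For the composition identity $(g\circ f)_*=g_*\circ f_*$, the cleanest way is to check the dual identity on the representing Hom-functors: tracing $\varphi\colon F\to\pi_Z^{-1}G$ under $(g\circ f)^{-1}=f^{-1}\circ g^{-1}$ factorizes exactly as applying first $g^{-1}$ and then $f^{-1}$, and this matches the two-step composition of adjunction isomorphisms, so both $(g\circ f)_*$ and $g_*\circ f_*$ represent the same natural transformation; Yoneda then forces equality. Similarly, $\Id^{-1}=\Id$ makes the natural transformation trivial, giving $(\Id_X)_*=\Id$.

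The only potentially subtle point is that Yoneda is being applied to functors on the (unbounded) derived category, which is not a Hom-set small category in the naive sense. This is harmless here because the relevant morphisms are defined and natural, and uniqueness of the representing morphism in $D(\ZZ)$ follows from representability of $\LL(X,f^{-1}F)$ and $\LL(Y,F)$; but it is the one step worth pointing out explicitly, rather than the routine diagram chase of the functorial identities.
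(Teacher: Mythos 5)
Your construction is exactly the paper's: the inverse image map on $\Hom$-groups, rewritten via the adjunctions $\LL\dashv\pi^{-1}$ and the identity $\pi_X^{-1}=f^{-1}\circ\pi_Y^{-1}$, then Yoneda to obtain $f_*$, with the functorial identities falling out of $(g\circ f)^{-1}=f^{-1}\circ g^{-1}$ and $\Id_X^{-1}=\Id$. Correct, and essentially the same argument as in the paper (your aside on representability guaranteeing uniqueness of the Yoneda-induced morphism is a fair precision, not a divergence).
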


\begin{proof} The inverse image morphism $\Hom_{D(Y)}(F,G)\to\Hom_{D(X)}(f^{-1}F,G)$ gives, by Yoneda, the desired morphism. The equality $(g\circ f)^{-1}=f^{-1}\circ g^{-1}$ yields the equality $(g\circ f)_*=g_*\circ f_*$ and the equality $\Id_X^{-1}=\Id$ yields that ${\Id_X}_*=\Id$.
\end{proof}

\begin{rem}\label{rem-f_*} Let $F\in D(Y)$ and $\epsilon_F\colon F\to\pi_Y^{-1}\LL(Y,F)$ the unit morphism. The diagram
\[ \xymatrix{ f^{-1}F\ar[rr]^{f^{-1}(\epsilon_F)}\ar[rd]_{\epsilon_{f^{-1}F}} & & \pi_X^{-1}\LL(Y,F)
\\ & \pi_X^{-1}\LL(X,f^{-1}F)\ar[ru]_{\pi_X^{-1}(f_*)} & }\] is commutative.
\end{rem}

\begin{prop}[Mayer-Vietoris]\label{MV1} Let $X=U\cup V$, with $U,V$ open subsets. For any  $F\in D(X)$ one  has an exact triangle:
\[ \LL(U\cap V,F_{\vert U\cap V})\to \LL(U,F_{\vert U})\oplus \LL(V,F_{\vert V})\to  \LL(X,F)   \]
\end{prop}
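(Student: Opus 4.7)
The plan is to deduce the Mayer-Vietoris triangle from the corresponding triangle of sheaves by applying the exact functor $\LL(X,\,\cdot\,)$ and then using Proposition \ref{inmersiones}(1) to rewrite each term.

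First I would establish the Mayer-Vietoris exact triangle at the level of sheaves:
\[ F_{U\cap V}\to F_U\oplus F_V\to F \]
in $D(X)$. Writing $j_W\colon W\hookrightarrow X$ for $W\in\{U,V,U\cap V\}$, the functors $j_W^{-1}$ and $j_{W,!}$ are exact on sheaves, so $F_W=j_{W,!}j_W^{-1}F$ makes sense degree by degree on any complex and is itself exact in $F$. Thus it suffices to exhibit a short exact sequence of complexes of sheaves
\[ 0\to F_{U\cap V}\xrightarrow{(\alpha,-\beta)} F_U\oplus F_V\xrightarrow{\gamma+\delta} F\to 0, \]
where $\alpha,\beta,\gamma,\delta$ are the obvious counits. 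Exactness is checked stalkwise, and since $X=U\cup V$, every point lies in $U\cap V$, $U\setminus V$, or $V\setminus U$; in each of these three cases the stalk sequence is trivially seen to be exact. Passing to $D(X)$ gives the exact triangle displayed above.

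Next I would apply $\LL(X,\,\cdot\,)$, which is a triangulated functor (being a left adjoint of the triangulated functor $\pi_X^{-1}$; see \cite{Neeman2}), to obtain an exact triangle in $D(\ZZ)$:
\[ \LL(X,F_{U\cap V})\to \LL(X,F_U\oplus F_V)\to \LL(X,F). \]
Since $\LL(X,\,\cdot\,)$ commutes with direct sums, the middle term splits as $\LL(X,F_U)\oplus \LL(X,F_V)$. Finally, applying Proposition \ref{inmersiones}(1) to each open immersion yields the identifications
\[ \LL(X,F_W)=\LL(W,F_{\vert W}),\qquad W\in\{U,V,U\cap V\}, \]
and plugging these in gives precisely the desired exact triangle.

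The main (minor) obstacle is the first step: checking that the short exact sequence $0\to F_{U\cap V}\to F_U\oplus F_V\to F\to 0$ of complexes of sheaves is actually a short exact sequence, so that it yields a triangle in $D(X)$. This is nothing more than the stalkwise case analysis above, combined with the observation that $j_!$ and $j^{-1}$, being exact, commute with the formation of cohomology of complexes, so that the sequence is indeed exact in each degree.
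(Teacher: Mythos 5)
Your proof is correct and follows essentially the same route as the paper: apply the exact functor $\LL(X,\,\cdot\,)$ to the triangle $F_{U\cap V}\to F_U\oplus F_V\to F$ and identify the terms via Proposition \ref{inmersiones}(1). The only difference is that you spell out the stalkwise verification of the short exact sequence of sheaves, which the paper takes as standard.
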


\begin{proof} Applying $\LL(X,\quad)$ to the exact triangle $  F_{U\cap V}\to F_U\oplus F_V\to F$, one concludes by Proposition \ref{inmersiones}.
\end{proof}

\begin{prop}[Mayer-Vietoris2]\label{MV2} Let $X=Y\cup Z$, with $Y,Z$ closed subsets. For any  $F\in D(X)$ one  has a long exact sequence
\[\cdots \to   H_i(Y\cap Z,F_{\vert Y\cap Z}) \to H_i(Y,F_{\vert Y})\oplus  H_i(Z,F_{\vert Z})\to  H_i(X,F)\to  \cdots \]
\end{prop}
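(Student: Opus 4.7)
The plan is to establish the distinguished triangle
\[ \LL(Y \cap Z, F_{\vert Y \cap Z}) \to \LL(Y, F_{\vert Y}) \oplus \LL(Z, F_{\vert Z}) \to \LL(X, F) \]
in $D(\ZZ)$, from which the stated long exact sequence of homology groups follows by applying $H_i := H^{-i}$. The natural morphisms come from Proposition \ref{covariance} applied to the closed inclusions $Y \cap Z \hookrightarrow Y, Z$ and $Y, Z \hookrightarrow X$, with signs arranged so that the composition vanishes via the functoriality $(g \circ f)_* = g_* \circ f_*$. Unlike the open-cover case of Proposition \ref{MV1}, one cannot directly apply $\LL(X, -)$ to a sheaf triangle on $X$, because $\LL$ is only known to interact well with $j_!$ for open embeddings $j$; the natural route is therefore dual.

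The core input is the short exact sequence of sheaves on $X$
\[ 0 \to \pi_X^{-1}G \to (\pi_X^{-1}G)_Y \oplus (\pi_X^{-1}G)_Z \to (\pi_X^{-1}G)_{Y \cap Z} \to 0, \]
with $G_W := i_{W*} i_W^{-1} \pi_X^{-1} G$ for closed $W \subseteq X$, valid for every abelian group $G$ precisely because $X = Y \cup Z$: the stalk at each $x$ is trivially exact when $x$ lies in exactly one of $Y$, $Z$, and reduces to the familiar sequence $0 \to G \to G \oplus G \to G \to 0$ (diagonal then difference) when $x \in Y \cap Z$. For $G \in D(\ZZ)$ this promotes degreewise to a distinguished triangle in $D(X)$ by exactness of the functors $(-)_W$. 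Applying $\RR\Hom^\pun(F, -)$ to it and substituting the adjunction identifications of Remark \ref{RHom-cosections} and Proposition \ref{inmersiones}(2) produces the exact triangle
\[ \RR\Hom^\pun(\LL(X, F), G) \to \RR\Hom^\pun(\LL(Y, F_{\vert Y}), G) \oplus \RR\Hom^\pun(\LL(Z, F_{\vert Z}), G) \to \RR\Hom^\pun(\LL(Y \cap Z, F_{\vert Y \cap Z}), G) \]
in $D(\ZZ)$, for every $G \in D(\ZZ)$.

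To close, let $T$ denote the cone of $\LL(Y \cap Z, F_{\vert Y \cap Z}) \to \LL(Y, F_{\vert Y}) \oplus \LL(Z, F_{\vert Z})$ and let $\varphi \colon T \to \LL(X, F)$ be the morphism induced by the vanishing composition. The previous step shows that $\RR\Hom^\pun(\varphi, G)$ is an isomorphism for every $G \in D(\ZZ)$, hence $\RR\Hom^\pun(\operatorname{Cone}(\varphi), G) = 0$ for every $G$; taking $G = \operatorname{Cone}(\varphi)$ forces $\Id_{\operatorname{Cone}(\varphi)} = 0$, so $\operatorname{Cone}(\varphi) = 0$ and $\varphi$ is an isomorphism. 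The main delicacy is this closing Yoneda argument in the unbounded derived category together with sign-bookkeeping for the candidate triangle; the substantive content is entirely in the stalkwise exact sequence above and the adjunction identities already in hand.
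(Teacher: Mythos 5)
Everything up to and including your dual triangle is sound and is in fact exactly the paper's route: the same stalkwise short exact sequence $0\to G\to G_Y\oplus G_Z\to G_{Y\cap Z}\to 0$ of constant sheaves, the application of $\RR\Hom^\pun(F,\quad)$, and the identifications of Remark \ref{RHom-cosections} and Proposition \ref{inmersiones}(2) giving the exact triangle you call the previous step. The gap is in your closing paragraph. From the fact that, for every $G$, both $\RR\Hom^\pun(T,G)$ and $\RR\Hom^\pun(\LL(X,F),G)$ sit in exact triangles over the \emph{same} morphism $\RR\Hom^\pun(\LL(Y,F_{\vert Y})\oplus\LL(Z,F_{\vert Z}),G)\to\RR\Hom^\pun(\LL(Y\cap Z,F_{\vert Y\cap Z}),G)$, it does \emph{not} follow that your particular $\varphi\colon T\to\LL(X,F)$ induces an isomorphism $\RR\Hom^\pun(\varphi,G)$. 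The map $\varphi$ is only a filler, unique up to adding a composite $T\to \LL(Y\cap Z,F_{\vert Y\cap Z})[1]\to\LL(X,F)$, and you only know that $\RR\Hom^\pun(\varphi,G)$ commutes with the square involving the middle term; to run the five lemma (or the two-out-of-three for a morphism of triangles) you also need compatibility with the connecting maps of the two triangles, which you have not checked and which an arbitrary filler need not satisfy. Without that, ``$\RR\Hom^\pun(\varphi,G)$ is an isomorphism for every $G$'' is unjustified (in a general triangulated category a filler commuting with only one square can even be the zero map), so the neat Yoneda finish with $G=\operatorname{Cone}(\varphi)$ has nothing to act on. Note also that you are claiming something strictly stronger than the proposition, namely that the candidate triangle is distinguished, and ``dually exact for all $G$'' does not formally yield that.

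The paper sidesteps precisely this point: it never asserts the undualized triangle. It defines the three maps $\LL(X,F)[-1]\to \LL(Y\cap Z,F_{\vert Y\cap Z})\to \LL(Y,F_{\vert Y})\oplus\LL(Z,F_{\vert Z})\to\LL(X,F)$ by Yoneda directly from the dual triangle $(*)$, so that by construction applying $\Hom_\ZZ(\quad,\QQ/\ZZ)$ to the resulting sequence of homology groups gives, term by term and map by map, the long exact sequence of $(*)$ for $G=\QQ/\ZZ$; since $\Hom_\ZZ(\quad,\QQ/\ZZ)$ is exact and reflects exactness, the homology sequence is exact. You can repair your write-up either this way (drop the cone and check exactness after dualizing into $\QQ/\ZZ$), or by verifying that your $\varphi$ can be chosen compatibly with the connecting morphisms before invoking the five lemma; as written, the last step does not go through.
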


\begin{proof} For simplicity, we shall use the notation $\LL(C,F):=\LL(C,F_{\vert C})$ for any closed subset $C$ and any $F\in D(X)$. For any $G\in D(\ZZ)$ we have an   exact sequence
\[ 0\to G\to G_Y\oplus G_Z\to G_{Y\cap Z}\to 0.\] Applying $\RR\Hom^\pun(F,\quad)$ and Proposition \ref{inmcerrada} we obtain  an exact triangle
\[(*)\quad \RR\Hom^\pun(\LL(X,F),G)\to \RR\Hom^\pun(\LL(Y,F)\oplus \LL(Z,F),G)\to \RR\Hom^\pun(\LL(Y\cap Z,F),G)\]  
hence morphisms
\[ \LL(X,F)[-1]\to \LL(Y\cap Z,F_{\vert Y\cap Z})\to \LL(Y,F_{\vert Y})\oplus \LL(Z,F_{\vert Z})\to \LL(X,F)\] and then a sequence 
\[\cdots \to H_{i+1}(X,F)\to H_i(Y\cap Z,F_{\vert Y\cap Z}) \to H_i(Y,F_{\vert Y})\oplus  H_i(Z,F_{\vert Z})\to  H_i(X,F)\to  \cdots \] which is exact because, taking $\Hom(\quad,\mathbb{Q}/\ZZ)$, one obtains the long exact sequence associated to the triangle $(*)$ (for $G=\QQ/\ZZ$).
\end{proof} 

\subsection{Vietoris and homotopy invariance}

\begin{thm}[See {\cite[Ch.V, Thm. 6.1]{Bredon}}]\label{h-inv} Let $f\colon X\to Y$ be a proper, separated and surjective morphism. Assume that $f^{-1}(y)$ is connected and cohomologically trivial for any $y\in Y$. Then, for any $F\in D(Y)$, the morphism
\[ f_*\colon \LL(X,f^{-1}F)\to \LL(Y,F)\] is an isomorphism.
\end{thm}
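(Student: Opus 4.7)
The strategy I would follow is to convert the question into a check on a unit morphism in $D(Y)$, using duality and Yoneda. To show that $f_*\colon \LL(X, f^{-1}F)\to \LL(Y,F)$ is an iso in $D(\ZZ)$, it suffices to show that $\RR\Hom^\pun(f_*,G)$ is an iso for every $G\in D(\ZZ)$. By Remark \ref{RHom-cosections} applied to both sides, this identifies with the natural map
\[ \RR\Hom^\pun(F,\pi_Y^{-1}G)\to \RR\Hom^\pun(f^{-1}F,\pi_X^{-1}G). \]
Using $\pi_X^{-1}=f^{-1}\pi_Y^{-1}$ and the $(f^{-1},\RR f_*)$-adjunction, the right-hand side rewrites as $\RR\Hom^\pun(F,\RR f_*\pi_X^{-1}G)$, and the map above is the one induced by the unit $\eta\colon \pi_Y^{-1}G\to \RR f_*\pi_X^{-1}G$. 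Hence it is enough to prove that $\eta$ is an isomorphism in $D(Y)$.

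I would then verify $\eta$ stalkwise. Since $f$ is proper and separated (so the fibers are compact Hausdorff), the proper base change theorem gives
\[ (\RR f_*\pi_X^{-1}G)_y \;=\; \RR\Gamma\bigl(f^{-1}(y),\pi_{f^{-1}(y)}^{-1}G\bigr), \]
and under this identification $\eta_y$ becomes the natural map $G\to \RR\Gamma(f^{-1}(y),G)$ induced by the projection $f^{-1}(y)\to\{*\}$. By hypothesis each fiber is connected (which forces $H^0$ to be $G$) and cohomologically trivial (which kills $H^i$ for $i>0$), so this map is an isomorphism whenever $G$ is a single abelian group placed in degree zero.

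To promote the statement to arbitrary $G\in D(\ZZ)$, I would reduce as in the proof of Theorem \ref{derivedproduct}: write $G\cong \prod_n^{D} H^n(G)[-n]$, use that $\pi_Y^{-1}$ commutes with direct products of constant complexes (Proposition \ref{directprodconstant} together with the local cohomological hypothesis on $Y$) and that $\RR f_*$ commutes with arbitrary products since it is a right adjoint, and conclude that it suffices to check $\eta$ is an iso for each cohomology group $H^n(G)$ separately, which is the case already handled. The main obstacle I anticipate is precisely the control of proper base change for unbounded constant complexes; the bounded below case is the classical Vietoris--Begle argument reproduced in \cite[Ch.V, Thm.~6.1]{Bredon}, and the unbounded extension is exactly parallel to the argument used in Theorem \ref{derivedproduct} to prove that $\pi^{-1}$ commutes with products, which is why the same local hypotheses on $Y$ (or the weakening in Remark \ref{loc-compact}) are required.
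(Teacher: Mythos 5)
Your proposal is correct and follows essentially the same route as the paper: the paper's proof also reduces, via the Yoneda-style definition of $f_*$, to showing that the unit $G\to \RR f_*f^{-1}G$ (constant coefficients) is an isomorphism, which it deduces from the fiber hypotheses and the base change theorem cited from \cite{Iversen}. You simply spell out in more detail what the paper leaves to the reference, namely the stalkwise proper base change computation on the fibers and the reduction of unbounded $G$ to single abelian groups.
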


\begin{proof} Recall that $f_*$ is defined, by Yoneda, from the inverse image morphism $$\Hom_{D(Y)}(F,G)\to\Hom_{D(X)}(f^{-1}F,G)=\Hom_{D(Y)}(F,\RR f_*f^{-1}G).$$  Since  $G\to \RR f_*f^{-1}G$ is an isomorphism by the hypothesis and base change theorem (see \cite{Iversen}), we are done. 

%
\end{proof}

\begin{coro}[See {\cite[Ch. V, Cor. 6.13]{Bredon}}]\label{h-inv2} If $f,g\colon X\to Y$ are homotopic, then, for any $G\in D(\ZZ)$, the morphisms  $$f_*,g_*\colon\LL(X,G)\to\LL(Y,G)$$ coincide. Consequently, if $X$ and $Y$ are homotopic, then
\[ \LL(X,G)\simeq \LL(Y,G)\] for any  $G\in D(\ZZ)$.
\end{coro}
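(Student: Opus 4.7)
The plan is to reduce the statement to homotopy invariance at the level of the projection $X\times I\to X$, and then invoke Theorem~\ref{h-inv}. Let $I=[0,1]$ and let $H\colon X\times I\to Y$ be a homotopy with $H\circ i_0=f$ and $H\circ i_1=g$, where $i_0,i_1\colon X\hookrightarrow X\times I$ are the inclusions at the two endpoints. Using the standing convention that $G$ denotes both $\pi_X^{-1}G$ on $X$ and $\pi_{X\times I}^{-1}G$ on $X\times I$ (they agree under $i_j^{-1}$), functoriality from Proposition~\ref{covariance} gives
\[
f_*=H_*\circ (i_0)_*,\qquad g_*=H_*\circ (i_1)_*,
\]
so it suffices to show that $(i_0)_*=(i_1)_*$ as maps $\LL(X,G)\to\LL(X\times I,G)$.

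For this, I would apply Theorem~\ref{h-inv} to the projection $p\colon X\times I\to X$. This map is proper (since $I$ is compact), separated, and surjective, and every fibre is homeomorphic to the interval $I$, which is connected and cohomologically trivial because it is contractible. Hence Theorem~\ref{h-inv} tells us that
\[
p_*\colon \LL(X\times I, G)\longrightarrow \LL(X,G)
\]
is an isomorphism. Since $p\circ i_0=p\circ i_1=\id_X$, functoriality of the covariance $(\ )_*$ together with $(\id_X)_*=\id$ yields
\[
p_*\circ (i_0)_*=p_*\circ (i_1)_*=\id_{\LL(X,G)}.
\]
Since $p_*$ is invertible, this forces $(i_0)_*=(i_1)_*=p_*^{-1}$, which completes the first part of the statement.

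The second part is then a formal consequence. If $X$ and $Y$ are homotopy equivalent there exist maps $\alpha\colon X\to Y$ and $\beta\colon Y\to X$ with $\beta\circ\alpha\simeq \id_X$ and $\alpha\circ\beta\simeq \id_Y$. The first part, combined with functoriality, gives $\beta_*\circ\alpha_*=(\beta\circ\alpha)_*=\id$ and $\alpha_*\circ\beta_*=\id$, so $\alpha_*\colon\LL(X,G)\to\LL(Y,G)$ is an isomorphism. I do not expect any serious obstacle here: the only point that requires checking is that the projection $p$ satisfies the hypotheses of Theorem~\ref{h-inv}, and that is immediate from the compactness and contractibility of $I$ (one may also invoke Remark~\ref{loc-compact} in the locally compact Hausdorff case instead of asking for local cohomological triviality).
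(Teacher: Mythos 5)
Your proof is correct and follows essentially the same route as the paper: both arguments reduce to showing $(i_0)_*=(i_1)_*$ via functoriality of $(\ )_*$ and then apply Theorem~\ref{h-inv} to the projection $X\times[0,1]\to X$, whose fibres are connected and cohomologically trivial, so that $(i_t)_*$ is forced to be the inverse of $p_*$ for every $t$. The only difference is that you spell out the formal deduction of the homotopy-equivalence consequence, which the paper leaves implicit.
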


\begin{proof} By hypothesis, there exists a continuous map $H\colon X\times [0,1]\to Y$ such that $H_0=f$, $H_1=g$. It suffices to see that the morphism ${H_t}_*\colon \LL(X,G)\to\LL(Y,G)$ does not depend on $t$. Let $i_t\colon X\to X\times [0,1]$, $i_t(x)=(x,t)$. Since $H_t=H\circ i_t$, it suffices to see that ${i_t}_*\colon  \LL(X,G)\to\LL(X\times[0,1],G)$ does not depend on $t$. Let $\pi\colon X\times [0,1]\to X$ be the projection. Since ${i_t}_*\circ \pi_*=\id$ and $\pi_*$ is an isomorphism by Theorem \ref{h-inv}, we conclude that ${i_t}_*$ is the inverse of $\pi_*$, and then it does not depend on $t$.
\end{proof}

\subsection{Local homology and Excision}

\begin{defn} Let $Y$ be a closed subset of $X$. Let us denote by $\La^Y(X,F)$ the cokernel of the map 
\[ \La(X-Y,F_{\vert X-Y})\to \La(X,F).\]
Since $\La(X-Y,F_{\vert X-Y})=\La(X,F_{X-Y})$, it follows from the exact sequence
\[ 0\to F_{X-Y}\to F\to F_Y\to 0\] that $\La^Y(X,F)=\La(X,F_Y)$ and then 
\[ \Hom_\ZZ(\La^Y(X,F),G)=\Hom_{\Shv}(F_Y,G)=\left\{\aligned &\Hom_{\Shv}(F,{\underline\Gamma}_YG)\\ &\Gamma_Y(X,\HHom(F,G))\endaligned\right. .\]
\end{defn}

\begin{defn} For any $F\in D(X)$, we define $\LL^Y(X,F):=\LL(X,F_Y)$ and one has
\[ \RR\Hom^\pun(\LL^Y(X,F),G)=\RR\Hom^\pun(F_Y,G)=
\left\{\aligned &\RR\Hom^\pun(F,\RR{\underline\Gamma}_YG)\\ &\RR\Gamma_Y(X,\RR\HHom^\pun(F,G))\endaligned\right.
.\]
 We shall denote
\[ H_i^Y(X,F):=H_i[\LL^Y(X,F)].\]
\end{defn}

\begin{rem} The usual notation for this local homology would be $H_i(X,U;F)$; we have prefered to use a notation ``dual'' to that of local cohomology.
\end{rem}

\begin{rem}\label{localhomology} Applying $\LL(X,\quad)$ to the exact triangle $F_{X-Y}\to F\to F_Y$ one obtains the exact triangle
\[ \LL(X-Y,F_{\vert X-Y})    =\LL(X,F_{X-Y})\to \LL(X,F)\to \LL(X,F_Y)=\LL^Y(X,F)\] and then a  long exact sequence
\[ \cdots\to H_i(X-Y,F_{\vert X-Y})\to H_i(X,F)\to H_i^Y(X,F)\to  \cdots\]
\end{rem}
\begin{prop} For any closed subsets $Y,Z$ of $X$ and any $F\in D(X)$ one has an exact triangle
\[ \LL^{Y\cup Z}(X,F)\to \LL^Y(X,F)\oplus \LL^Z(X,F)\to \LL^{Y\cap Z}(X,F)\] and then a long exact sequence
\[ \cdots\to H_i^{Y\cup Z}(X,F )\to H_i^Y(X,F)\oplus H_i^Z(X,F)\to H_i^{Y\cap Z}(X,F)\to  \cdots\]
\end{prop}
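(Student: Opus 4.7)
The plan is to reduce to a Mayer--Vietoris sequence at the level of sheaves (or, more generally, at the level of $D(X)$) and then apply the exact functor $\LL(X,\quad)$.

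First, I would establish the exact triangle in $D(X)$
\[ F_{Y\cup Z}\to F_Y\oplus F_Z\to F_{Y\cap Z} \]
where the morphisms are induced by the containments $Y\cap Z\subseteq Y,Z\subseteq Y\cup Z$. Concretely, recall that for a closed subset $C\subseteq X$ with inclusion $i_C$, the functor $F\mapsto F_C:=i_{C,*}i_C^{-1}F$ is exact (both $i_{C,*}$ and $i_C^{-1}$ are exact since $i_C$ is a closed immersion), and for $C\subseteq C'$ the unit $i_{C'}^{-1}F\to j_*j^{-1}i_{C'}^{-1}F$ (with $j\colon C\hookrightarrow C'$) produces a natural morphism $F_{C'}\to F_C$. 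For a sheaf $F$, the exactness of
\[ 0\to F_{Y\cup Z}\to F_Y\oplus F_Z\to F_{Y\cap Z}\to 0 \]
can be checked stalkwise: at $x\in Y\cap Z$ one gets $0\to F_x\xrightarrow{(\id,\id)}F_x\oplus F_x\xrightarrow{(a,b)\mapsto a-b}F_x\to 0$; at $x\in Y\setminus Z$ it reduces to the identity $F_x\xrightarrow{\sim} F_x$ in the middle two terms; outside $Y\cup Z$ everything vanishes. For a general $F\in D(X)$, one obtains the exact triangle by applying the termwise-exact ``closed restriction'' to a $K$-injective (or any) representative of $F$.

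Second, since $\LL(X,\quad)$ is exact (it is a left adjoint of the exact functor $\pi_X^{-1}$, as noted after the definition of homology), applying it to the above triangle yields the exact triangle
\[ \LL(X,F_{Y\cup Z})\to \LL(X,F_Y)\oplus \LL(X,F_Z)\to \LL(X,F_{Y\cap Z}), \]
which, by definition of local homology $\LL^C(X,F)=\LL(X,F_C)$, is exactly
\[ \LL^{Y\cup Z}(X,F)\to \LL^Y(X,F)\oplus \LL^Z(X,F)\to \LL^{Y\cap Z}(X,F). \]
Taking $H_i$ of the resulting triangle gives the desired long exact sequence.

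There is no real obstacle here; the only point that requires a moment of care is the verification of the sheaf-level short exact sequence and the compatibility of signs/directions in the arrows (which is why the middle map is a difference). Alternatively, one could dualize the proof of Proposition~\ref{MV2}: for any $G\in D(\ZZ)$ one applies $\RR\Hom^\pun(F,\quad)$ to $0\to G_{Y\cup Z}\to G_Y\oplus G_Z\to G_{Y\cap Z}\to 0$ (which is the same sheaf sequence applied to the constant sheaf $G$) and uses the duality of Proposition~\ref{inmersiones}(2) to identify the terms with $\RR\Hom^\pun(\LL^C(X,F),G)$; Yoneda then yields the triangle.
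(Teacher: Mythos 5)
Your proof is correct and follows essentially the same route as the paper: the paper's proof is precisely to apply the exact functor $\LL(X,\quad)$ to the exact triangle $F_{Y\cup Z}\to F_Y\oplus F_Z\to F_{Y\cap Z}$, which you establish and use in the same way (your stalkwise verification and the dual alternative are just additional detail).
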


\begin{proof}Apply $\LL(X,\quad)$ to the exact triangle $F_{Y\cup Z}\to F_Y\oplus F_Z\to F_{Y\cap Z}$.
\end{proof} 

\begin{rem} \label{pre-escision}For any open subset $j\colon U\hookrightarrow X$ and any $F\in D(U)$ one has:
\[ \LL^{Y\cap U}(U,F)=\LL^Y(X,j_!F).\]
\end{rem}

\begin{proof} $\LL^{Y\cap U}(U,F)=\LL(U,F_{Y\cap U})\overset{\ref{inmersiones}}=\LL(X,j_!F_{Y\cap U}) = \LL(X,(j_!F)_Y )=\LL^Y(X,j_!F)$.
\end{proof}

\begin{prop}[Excision]\label{excision} Let $U$ be an open subset of $X$ containing a closed subset $Y$ of $X$. The natural morphism
\[ \LL^Y(U,F_{\vert U})\to \LL^Y(X,F)\] is an isomorphism.
\end{prop}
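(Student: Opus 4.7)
The plan is to reduce excision to Remark \ref{pre-escision}, which already encodes the ``open'' half of the reduction, together with the elementary observation that the functor $(-)_Y$ attached to the closed subset $Y$ only sees a neighbourhood of $Y$.

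First, I would apply Remark \ref{pre-escision} to the open inclusion $j\colon U\hookrightarrow X$ and the complex $F_{\vert U}\in D(U)$. Since $Y\subseteq U$, one has $Y\cap U=Y$, and the remark gives a canonical isomorphism
\[ \LL^Y(U,F_{\vert U})=\LL^{Y\cap U}(U,F_{\vert U})\simeq \LL^Y(X,j_!F_{\vert U})=\LL^Y(X,F_U). \]
Under this identification, the natural morphism in the statement becomes the map
\[ \LL^Y(X,F_U)\to \LL^Y(X,F) \]
induced by the canonical morphism of sheaves $F_U\to F$. It therefore suffices to show that this latter map is an isomorphism.

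By the definition $\LL^Y(X,G)=\LL(X,G_Y)$, where $G_Y=i_*i^{-1}G$ for the closed inclusion $i\colon Y\hookrightarrow X$. Since $Y\subseteq U$, the complex $F_U$ agrees with $F$ on the open neighbourhood $U$ of $Y$, so $i^{-1}F_U=i^{-1}F$. Consequently
\[ (F_U)_Y=i_*i^{-1}(F_U)=i_*i^{-1}F=F_Y, \]
and the map $F_U\to F$ becomes an isomorphism after applying $i_*i^{-1}$. Applying the exact functor $\LL(X,\,\cdot\,)$ yields the desired isomorphism $\LL^Y(X,F_U)\simeq \LL^Y(X,F)$, and composing with the previous identification proves the proposition.

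I do not foresee any serious obstacle: the content of excision is already encoded in Remark \ref{pre-escision} (a compatibility of $\LL^{(-)}$ with $j_!$), and what remains is the formal fact that $G\mapsto G_Y$ only depends on the restriction of $G$ to any open neighbourhood of $Y$.
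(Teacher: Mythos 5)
Your proof is correct and follows essentially the same route as the paper: both reduce via Remark \ref{pre-escision} to comparing $\LL^Y(X,F_U)$ with $\LL^Y(X,F)$. The only (immaterial) difference is the last step: the paper applies $\LL^Y(X,\quad)$ to the exact triangle $F_U\to F\to F_{X-U}$ and notes that $(F_{X-U})_Y=0$ because $Y\cap(X-U)=\emptyset$, whereas you observe directly that $F_U\to F$ becomes an isomorphism after applying $(\quad)_Y$ since $Y\subseteq U$ --- two equivalent formulations of the fact that $(\quad)_Y$ only depends on a neighbourhood of $Y$, and both steps are legitimate because $i_*$ and $i^{-1}$ are exact for the closed inclusion $i\colon Y\hookrightarrow X$.
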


\begin{proof} Since $\LL^Y(U,F_{\vert U}) \overset{ \ref{pre-escision}} = \LL^Y(X,F_U)$,  applying $\LL^Y(X,\quad)$ to the exact triangle $F_U\to F\to F_{X-U}$, it suffices to see that $\LL^Y(X,F_{X-U}) = 0$. But  $\LL^Y(X,F_{X-U})=\LL(X,(F_{X-U})_Y) = 0$ because $Y\cap (X-U)$ is empty.
\end{proof}

\subsection{Universal coefficients formula}

\begin{thm}[See {\cite[Ch.V, 3.10]{Bredon}}] \label{universalcoefficients} For any $F\in D(X)$ and any $G\in D(\ZZ)$, there is a natural isomorphism
\[   \LL(X,F\overset\LL \otimes G) \overset\sim\to \LL(X,F)\overset\LL\otimes G\]  In particular, for any abelian group $G$ one has a (split) exact sequence
\[ 0\to H_i(X,F)\otimes G\to H_i(X,F\overset\LL\otimes G)\to \Tor_1(H_{i-1}(X,F),G)\to 0.\]
\end{thm}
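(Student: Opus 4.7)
The plan is to construct a natural candidate morphism and then verify it is an isomorphism by a devissage argument on $G$, using that $\LL(X,\quad)$ is triangulated and commutes with direct sums.

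First I would build the natural transformation. The unit $\epsilon_F\colon F\to \pi_X^{-1}\LL(X,F)$ of the adjunction $\LL(X,\quad)\dashv \pi_X^{-1}$, tensored with $\pi_X^{-1}G$, yields a morphism in $D(X)$:
\[ F\overset\LL\otimes \pi_X^{-1}G \xrightarrow{\epsilon_F\otimes \id} \pi_X^{-1}\LL(X,F)\overset\LL\otimes \pi_X^{-1}G \;\simeq\; \pi_X^{-1}\bigl(\LL(X,F)\overset\LL\otimes G\bigr), \]
where the final identification uses that the exact functor $\pi_X^{-1}$ preserves $K$-flat complexes (if $P$ is $K$-flat in $D(\ZZ)$ and $\mathcal A$ is an acyclic complex of sheaves, then the stalks $(\pi_X^{-1}P\otimes \mathcal A)_x=P\otimes \mathcal A_x$ are acyclic, so $\pi_X^{-1}P\otimes\mathcal A$ is acyclic). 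By adjunction this produces the desired morphism $\alpha_{F,G}\colon \LL(X,F\overset\LL\otimes \pi_X^{-1}G)\to \LL(X,F)\overset\LL\otimes G$.

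Next, fixing $F$ and regarding both sides as functors of $G$, both are triangulated and commute with arbitrary direct sums. For the source, this is the composition of $\pi_X^{-1}$, $F\overset\LL\otimes\quad$, and $\LL(X,\quad)$: the first is exact, the second is a triangulated functor commuting with direct sums, and the third is triangulated (by Neeman, since it is the left adjoint of an exact functor) and commutes with direct sums (as a left adjoint). For the target, $\LL(X,F)\overset\LL\otimes\quad$ has the same properties. Hence $\alpha_{F,\pun}$ is a natural transformation between triangulated functors on $D(\ZZ)$ that commute with direct sums.

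Now I would verify that $\alpha_{F,\ZZ}$ is an isomorphism. Since $\pi_X^{-1}\ZZ$ is the tensor unit, $\epsilon_F\otimes\id$ is just $\epsilon_F\colon F\to \pi_X^{-1}\LL(X,F)$, whose adjoint is $\id_{\LL(X,F)}$. Because $\ZZ$ is a compact generator of $D(\ZZ)$, the full subcategory
\[ \C := \{ G\in D(\ZZ) \;:\; \alpha_{F,G}\text{ is an isomorphism}\} \]
is a localizing triangulated subcategory (closure under cones is the five lemma, closure under direct sums follows from the compatibility of $\alpha$ with $\oplus$) containing $\ZZ$, hence equals $D(\ZZ)$. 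This establishes the claimed natural isomorphism. The split short exact sequence for an ordinary abelian group $G$ is then just the classical universal coefficients formula applied to the complex of abelian groups $\LL(X,F)$, translated through the convention $H_i=H^{-i}$.

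The main obstacle is really the bookkeeping around the monoidality of $\pi_X^{-1}$ with respect to the derived tensor product; once that is in place the rest is a formal devissage from the fact that $\LL(X,\quad)$ is triangulated and commutes with sums.
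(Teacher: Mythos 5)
Your construction of the morphism is exactly the paper's: tensor the unit $\epsilon_F$ with $\pi_X^{-1}G$, use that $\pi_X^{-1}$ is monoidal for $\overset\LL\otimes$ (the paper takes this identification for granted; your $K$-flatness check via stalks is the right justification), and adjoint it over. Where you diverge is the verification that the map is an isomorphism. The paper dualizes: it applies $\Hom_{D(\ZZ)}(\quad,A)$ for arbitrary $A$, rewrites both sides by adjunction, and reduces to showing that $\pi^{-1}\RR\Hom^\pun(G,A)\to\RR\HHom^\pun(\pi^{-1}G,\pi^{-1}A)$ is an isomorphism, which it proves by a hands-on devissage on $G$ (free $\ZZ$-modules first, using that $\pi^{-1}$ commutes with direct products, i.e.\ Proposition \ref{directprodconstant}; then cokernels of monomorphisms of frees; then $G\simeq\oplus_n H^n(G)[-n]$). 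You instead run the devissage on the statement itself: both sides are exact functors of $G$ commuting with coproducts, the subcategory where $\alpha_{F,\,\cdot}$ is an isomorphism is localizing, it contains $\ZZ$ by the triangular identity, and $\ZZ$ generates $D(\ZZ)$ as a localizing subcategory. Both arguments are correct. Your route is more formal and slightly more general in spirit: it uses only the existence and formal properties of the left adjoint (exactness, commutation with coproducts), and never re-invokes the local connectedness of $X$ through product-commutation of $\pi^{-1}$; the cost is an appeal to compact generation of $D(\ZZ)$, whereas the paper's argument stays at the level of elementary facts about $\ZZ$-modules (every module is a quotient of a free one, every complex splits into its cohomologies) and, as a by-product, records the useful comparison map between $\pi^{-1}\RR\Hom^\pun$ and $\RR\HHom^\pun$ of constant coefficients. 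Your final reduction of the split exact sequence to the classical universal coefficients theorem for the complex $\LL(X,F)$ matches the paper's intent.
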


\begin{proof} Let us first define the morphism. Let $\pi\colon X\to\{*\}$ be the projection to a point. The unit map $F\to \pi^{-1}\LL(X,F)$ induces a morphism $F\overset\LL\otimes \pi^{-1}G \to \pi^{-1}\LL(X,F)\overset\LL\otimes \pi^{-1}G= \pi^{-1}(\LL(X,F)\overset\LL\otimes G)$, hence a morphism $\LL(X,F\overset\LL \otimes \pi^{-1}G) \to \LL(X,F)\overset\LL\otimes G$. In order to see that it is an isomorphism, we see the isomorphism after applying $ \Hom_{D(\ZZ)} (\quad, A)$ for any $A\in D(\ZZ)$. Now,
\[\aligned   \Hom_{D(\ZZ)} ( \LL(X,F)\overset\LL\otimes G,A)&=  \Hom_{D(\ZZ)} (\LL(X,F),\RR\Hom^\pun(G,A))\\ &=  \Hom_{D(X)} ( F ,\pi^{-1}\RR\Hom^\pun(G,A))\endaligned\] and \[\aligned
 \Hom_{D(\ZZ)} (  \LL(X,F\overset\LL \otimes \pi^{-1}G),A) &=   \Hom_{D(X)} (  F\overset\LL \otimes \pi^{-1}G,\pi^{-1}A) \\ & =  \Hom_{D(X)} (  F,\RR\HHom^\pun(\pi^{-1}G,\pi^{-1}A)
\endaligned\] and we conclude if we prove that 
\[ \pi^{-1}\RR\Hom^\pun(G,A) \to \RR\HHom^\pun(\pi^{-1}G,\pi^{-1}A)\] is an isomorphism. If $G$ is a free $\ZZ$-module, then the result follows because $\pi^{-1}$ commutes with direct products. If $G$ is a $\ZZ$-module, we conclude by putting $G$ as a cokernel of a monomorphism between free $\ZZ$-modules. Since any $G\in D(\ZZ)$ is isomorphic to $\oplus_n H^n(G)[-n]$, the result follows. 

\end{proof}

\subsection{Cap product}(See {\cite[Ch.V, Sec.10]{Bredon}})

\begin{thm}\label{cap} For any $F,F'\in D(X)$ there is a morphism
\[ \cup\colon \LL(X,F)\overset\LL\otimes \RR\Gamma(X,F')\to \LL(X,F\overset\LL\otimes F')\] and hence  morphisms
\[ \cup\colon H_p(X,F)\otimes H^q(X,F')\to H_{p-q}(X,F\overset\LL \otimes F').\]
\end{thm}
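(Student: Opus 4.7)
The plan is to construct the cap product by combining the universal coefficients isomorphism (Theorem \ref{universalcoefficients}) with the counit of the adjunction $\pi_X^{-1}\dashv \RR\Gamma(X,\quad)$, and then derive the induced pairing on (co)homology groups via the standard product $H^i\otimes H^j\to H^{i+j}$ for tensor products in $D(\ZZ)$.

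First, let $\epsilon_{F'}\colon \pi_X^{-1}\RR\Gamma(X,F')\to F'$ denote the counit of the derived global sections adjunction, which lives in $D(X)$. Tensoring with $F$ in $D(X)$ produces
$$F\overset\LL\otimes \pi_X^{-1}\RR\Gamma(X,F')\to F\overset\LL\otimes F',$$
and applying the functor $\LL(X,\quad)$ gives
$$\LL\bigl(X,F\overset\LL\otimes \pi_X^{-1}\RR\Gamma(X,F')\bigr)\to \LL(X,F\overset\LL\otimes F').$$
On the other hand, Theorem \ref{universalcoefficients} applied to the abelian-group complex $G=\RR\Gamma(X,F')\in D(\ZZ)$ yields a canonical isomorphism
$$\LL(X,F)\overset\LL\otimes \RR\Gamma(X,F')\overset\sim\to \LL\bigl(X,F\overset\LL\otimes \pi_X^{-1}\RR\Gamma(X,F')\bigr).$$
Composing the two gives the desired morphism
$$\cup\colon \LL(X,F)\overset\LL\otimes \RR\Gamma(X,F')\to \LL(X,F\overset\LL\otimes F').$$

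For the induced maps on (co)homology, I would invoke the standard natural pairing available for any $A,B\in D(\ZZ)$ and any integers $i,j$, namely $H^i(A)\otimes H^j(B)\to H^{i+j}(A\overset\LL\otimes B)$. Taking $A=\LL(X,F)$, $B=\RR\Gamma(X,F')$, $i=-p$, and $j=q$, and remembering the convention $H_k=H^{-k}$, this becomes
$$H_p(X,F)\otimes H^q(X,F')\to H^{q-p}\bigl(\LL(X,F)\overset\LL\otimes \RR\Gamma(X,F')\bigr).$$
Post-composing with $H^{q-p}$ of the cap product above produces a map landing in $H^{q-p}\LL(X,F\overset\LL\otimes F')=H_{p-q}(X,F\overset\LL\otimes F')$, as required.

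I expect no genuine obstacle here: the construction is formal, resting only on two structural tools already in place, the universal coefficients identification and the existence of the adjunction counit. Naturality in both $F$ and $F'$ is automatic, since each ingredient—counit, derived tensor product, and the universal coefficients isomorphism—is itself natural. The only bookkeeping is the degree identification $H_k=H^{-k}$ to match the indices in the statement.
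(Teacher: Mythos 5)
Your construction is correct, but it is a genuinely different route from the one the paper takes. You build the map as the composite
\[
\LL(X,F)\overset\LL\otimes \RR\Gamma(X,F')\;\overset\sim\longleftarrow\;\LL\bigl(X,F\overset\LL\otimes \pi_X^{-1}\RR\Gamma(X,F')\bigr)\;\longrightarrow\;\LL(X,F\overset\LL\otimes F'),
\]
i.e.\ you invert the universal coefficients isomorphism of Theorem \ref{universalcoefficients} and then apply $\LL(X,\quad)$ to $\id_F\otimes\epsilon_{F'}$, where $\epsilon_{F'}$ is the counit of $\pi_X^{-1}\dashv\RR\Gamma(X,\quad)$; note that you really need the full strength of that theorem (the natural map constructed there goes in the direction $\LL(X,F\overset\LL\otimes G)\to\LL(X,F)\overset\LL\otimes G$, so you must use that it is an isomorphism to go backwards). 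Since Theorem \ref{universalcoefficients} is established before the cap product, this is legitimate, and your degree bookkeeping via the standard pairing $H^i(A)\otimes H^j(B)\to H^{i+j}(A\overset\LL\otimes B)$ is fine. The paper instead works entirely with adjunctions and never invokes universal coefficients: it starts from the unit $F\overset\LL\otimes F'\to\pi_X^{-1}\LL(X,F\overset\LL\otimes F')$ of $\LL(X,\quad)\dashv\pi_X^{-1}$, transposes it to a morphism $\Phi\colon F'\to\RR\HHom^\pun(F,\pi_X^{-1}\LL(X,F\overset\LL\otimes F'))$, takes $\RR\Gamma(X,\quad)$ and uses Remark \ref{RHom-cosections} to rewrite the target as $\RR\Hom^\pun(\LL(X,F),\LL(X,F\overset\LL\otimes F'))$, and finally transposes in $D(\ZZ)$. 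The paper's version is cheaper (it uses only the counit/unit formalism plus Remark \ref{RHom-cosections}) and produces the intermediate morphism $\Phi_\cup$, which is exactly what is used in the subsequent functoriality statement $f_*(\alpha\cup f^*\beta)=f_*(\alpha)\cup\beta$; your version buys a slicker one-line definition at the price of a stronger input, and if you wanted to prove that projection-type formula you would additionally have to check compatibility of the universal coefficients isomorphism with $f_*$.
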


\begin{proof} The unit morphism $F\overset\LL\otimes F'\to\pi_X^{-1}\LL(X, F\overset\LL\otimes F')$ defines a morphism
\[ \Phi\colon F'\to \RR\HHom^\pun (F, \pi_X^{-1}\LL(X, F\overset\LL\otimes F')).\] Taking global sections, one obtains a morphism
\[ \Phi_\cup\colon \RR\Gamma(X,F')\to \RR\Hom^\pun (F, \pi_X^{-1}\LL(X, F\overset\LL\otimes F')) = \RR\Hom^\pun (\LL(X,F),  \LL(X, F\overset\LL\otimes F')),\] which is equivalent, by adjunction, to a morphism $$\cup\colon \LL(X,F)\overset\LL\otimes \RR\Gamma(X,F')\to \LL(X,F\overset\LL\otimes F').$$ 
%
\end{proof}

\begin{thm}[See {\cite[Ch.V, Sec.10, (9)]{Bredon}}] Let $f\colon X\to Y$ be a continuous map. For any $F,F'\in D(Y)$ the diagram
\[\xymatrix{ \LL(Y,F) \ar[r]^{\Psi_{\cup_Y}\qquad\qquad} & \RR\Hom^\pun(\RR\Gamma(Y,F'),\LL(Y,F\overset\LL\otimes F'))
\\ \LL(X,f^{-1}F)\ar[u]^{f_*}\ar[r]^{\Psi_{\cup_X}\qquad\qquad\qquad} & \RR\Hom^\pun(\RR\Gamma(X,f^{-1}F'),\LL(X,f^{-1}(F\overset\LL\otimes F')))\ar[u]_{f_*^*}}
\] is commutative, where $f_*^*$ is the morphism induced by the morphisms $$f_*\colon \LL(X,f^{-1}(F\overset\LL\otimes F'))\to \LL(Y, F\overset\LL\otimes F'),\quad f^*\colon \RR\Gamma(Y,F')\to\RR\Gamma(X,f^{-1}F').$$ and $\Psi_{\cup_Y},\Psi_{\cup_X}$ are the morphisms corresponding to $\cup_Y,\cup_X$ by adjunction.  Thus, one has
\[ f_*(\alpha\cup f^*(\beta))=f_*(\alpha)\cup \beta\] for any $\alpha\in H_p(X,f^{-1}F)$, $\beta\in H^q(Y, F')$. 
\end{thm}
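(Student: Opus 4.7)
The plan is to peel away the adjunctions defining $\cup_X$ and $\cup_Y$ until the claimed commutativity reduces to the naturality statement of Remark \ref{rem-f_*}. Since $\Psi_{\cup_X}$ and $\Psi_{\cup_Y}$ are the Hom-tensor adjoints of $\cup_X$ and $\cup_Y$, the commutativity of the displayed diagram is equivalent to the equality
\[ f_*\circ \cup_X \circ (\id\overset\LL\otimes f^*) \;=\; \cup_Y \circ (f_*\overset\LL\otimes \id) \]
of morphisms $\LL(X,f^{-1}F)\overset\LL\otimes \RR\Gamma(Y,F')\to \LL(Y,F\overset\LL\otimes F')$. So the first step is to reformulate the theorem in this form and work with the cap product directly instead of its Hom-tensor transpose.

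Next I would transpose this identity through the adjunction $\LL(X,-)\dashv \pi_X^{-1}$. Unwinding the construction in Theorem \ref{cap}, the map $\cup_Y$ arises from the unit $\epsilon^Y\colon F\overset\LL\otimes F'\to \pi_Y^{-1}\LL(Y,F\overset\LL\otimes F')$ of $\LL(Y,-)\dashv \pi_Y^{-1}$ by a chain of adjunctions (Hom-tensor and $\RR\Gamma(Y,-)\dashv \pi_Y^{-1}$), and similarly $\cup_X$ arises from the analogous unit $\epsilon^X$ on $X$. By Remark \ref{rem-f_*}, $\pi_X^{-1}(f_*)\circ \epsilon^X_{f^{-1}(F\overset\LL\otimes F')}$ coincides with $f^{-1}(\epsilon^Y)$. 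Combining this with the monoidal compatibility $f^{-1}(F\overset\LL\otimes F')\simeq f^{-1}F\overset\LL\otimes f^{-1}F'$ and the analogous description of $f^*$ as coming, by adjunction, from $\epsilon^Y_{F'}$ together with the unit of $f^{-1}\dashv \RR f_*$, both compositions transpose to the \emph{same} morphism $f^{-1}F\overset\LL\otimes f^{-1}F'\to \pi_X^{-1}\LL(Y,F\overset\LL\otimes F')$ built from $f^{-1}(\epsilon^Y)$.

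The commutativity of the diagram then follows, and evaluating on classes $\alpha\in H_p(X,f^{-1}F)$ and $\beta\in H^q(Y,F')$ yields the projection-type formula $f_*(\alpha\cup f^*(\beta))=f_*(\alpha)\cup\beta$. The main obstacle is purely bookkeeping: the argument threads through several adjunctions (Hom-tensor, $\LL(X,-)\dashv \pi_X^{-1}$, $f^{-1}\dashv \RR f_*$, and the $\pi^{-1}$-analogue of the Hom-tensor adjunction) and one must apply the naturality squares in the correct sequence. No further homological input is needed beyond Remark \ref{rem-f_*} and the monoidality of $f^{-1}$.
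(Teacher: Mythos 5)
Your proposal is correct and follows essentially the same route as the paper: both proofs unwind the adjunctions defining the cap product until the claimed commutativity reduces to the compatibility of the units with $f_*$ (Remark \ref{rem-f_*}) together with the functoriality/monoidality of $f^{-1}$, the only difference being that you compare the tensor-form transposes while the paper compares the $\Phi_\cup$-form transposes. (Minor slip: the adjunction you invoke is $\pi_Y^{-1}\dashv \RR\Gamma(Y,\quad)$, not the other way around.)
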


\begin{proof} The statement is a consequence of the functoriality of all the constructions. Let us give the details. First notice that the morphism $f^*\colon \RR\Gamma(Y,F')\to\RR\Gamma(X,f^{-1}F')$ is nothing but a particular case of the natural morphism $\RR\Hom^\pun(F,F')\to\RR\Hom^\pun  (f^{-1}F,f^{-1}F')$ (when $F=\ZZ$) that we also denote by $f^*$. Let us simplify the notations, by putting $\LL(F)=\LL(Y,F)$, $\RR\Gamma(F')=\RR\Gamma(Y,F')$ and so on.

 The morphism $\Phi_Y \colon F\to \RR\HHom^\pun(F',\pi_Y^{-1}\LL(F\overset\LL\otimes F'))$, induces a morphism $f^{-1} \Phi_Y \colon f^{-1}F\to \RR\HHom^\pun(f^{-1}F',\pi_X^{-1}\LL(F\overset\LL\otimes F'))$ and a commutative diagram
\[ \xymatrix{ \RR\Gamma(F')\ar[r]^{\Phi_{\cup_Y}\qquad\qquad}\ar[d]_{f^*} &  \RR\Hom^\pun(F,\pi_Y^{-1}\LL(F\overset\LL\otimes F'))\ar[d]^{f^*}
\\ \RR\Gamma(f^{-1}F') \ar[r]^{ \Gamma(f^{-1}\Phi_Y)\qquad\qquad} & \quad\RR\Hom^\pun(f^{-1}F,\pi_X^{-1}\LL (F\overset\LL\otimes F')).\quad
}\] On the other hand, the composition
\[ \RR\Gamma(f^{-1}F') \overset{\Gamma(f^{-1}\Phi_Y)}\longrightarrow     \RR\Hom^\pun(f^{-1}F,\pi_X^{-1}\LL (F\overset\LL\otimes F'))\overset{f_*}\to \RR\Hom^\pun(f^{-1}F,\pi_X^{-1}\LL (f^{-1}(F\overset\LL\otimes F')))\] is the morphism $\Phi_{\cup_X}$ (see Remark \ref{rem-f_*}).

The statement now follows from the adjunctions between $\Phi_{\cup_Y}$ and $\Psi_{\cup_Y}$ (and between $\Phi_{\cup_X}$ and $\Psi_{\cup_X}$).

%
%
\end{proof}

\subsection{K\"{u}nneth formula}

Let $X_1,X_2$ be two topological spaces and $X_1\times X_2$ the product space.
For any $F_1\in D(X_1)$, $F_2\in D(X_2)$ let us denote
\[ F_1\boxtimes F_2:=(\pi_1^{-1}F_1)\overset\LL\otimes (\pi_2^{-1}F_2)\] where $\pi_i\colon X_1\times X_2\to X_i$ is the natural projection.

\begin{prop} For  any $F_1\in D(X_1)$, $F_2\in D(X_2)$ there is a natural morphism
\[ \LL(X_1\times X_2, F_1\boxtimes F_2)\to \LL(X_1,F_1)\overset\LL\otimes \LL(X_2,F_2).\]
\end{prop}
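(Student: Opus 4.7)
The plan is to construct the morphism by adjunction. By the universal property of $\LL(X_1\times X_2,\quad)$, giving a morphism
\[ \LL(X_1\times X_2, F_1\boxtimes F_2)\to \LL(X_1,F_1)\overset\LL\otimes \LL(X_2,F_2) \]
in $D(\ZZ)$ is equivalent to giving a morphism
\[ F_1\boxtimes F_2 \to \pi^{-1}\bigl(\LL(X_1,F_1)\overset\LL\otimes \LL(X_2,F_2)\bigr) \]
in $D(X_1\times X_2)$, where $\pi\colon X_1\times X_2\to\{*\}$ denotes the projection. So the task reduces to constructing this latter morphism in a natural way.

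For this, I would use two ingredients. First, since $\pi^{-1}$ is exact and is a (strong) monoidal functor, it commutes with $\overset\LL\otimes$: $\pi^{-1}(A\overset\LL\otimes B)=\pi^{-1}A\overset\LL\otimes \pi^{-1}B$. Second, from $\pi=\pi_{X_i}\circ \pi_i$ one has $\pi^{-1}=\pi_i^{-1}\circ \pi_{X_i}^{-1}$. Combining, the target rewrites as
\[ \pi^{-1}\bigl(\LL(X_1,F_1)\overset\LL\otimes \LL(X_2,F_2)\bigr) = \pi_1^{-1}\pi_{X_1}^{-1}\LL(X_1,F_1)\,\overset\LL\otimes\, \pi_2^{-1}\pi_{X_2}^{-1}\LL(X_2,F_2). \]
Now the unit morphisms $\epsilon_{F_i}\colon F_i\to \pi_{X_i}^{-1}\LL(X_i,F_i)$ of the adjunction $(\LL(X_i,\quad),\pi_{X_i}^{-1})$ pull back, via $\pi_i^{-1}$, to morphisms $\pi_i^{-1}F_i\to \pi_i^{-1}\pi_{X_i}^{-1}\LL(X_i,F_i)$, and their derived tensor product
\[ \pi_1^{-1}F_1\overset\LL\otimes \pi_2^{-1}F_2 \longrightarrow \pi_1^{-1}\pi_{X_1}^{-1}\LL(X_1,F_1)\,\overset\LL\otimes\, \pi_2^{-1}\pi_{X_2}^{-1}\LL(X_2,F_2) \]
is exactly the required morphism $F_1\boxtimes F_2\to \pi^{-1}(\LL(X_1,F_1)\overset\LL\otimes \LL(X_2,F_2))$. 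Transporting this back by the adjunction $(\LL(X_1\times X_2,\quad),\pi^{-1})$ yields the stated arrow.

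Naturality in $F_1$ and $F_2$ is automatic since every ingredient (units, $\pi_i^{-1}$, $\overset\LL\otimes$) is functorial. There is no real obstacle here: the only subtle point one should double-check is that $\pi^{-1}$ genuinely commutes with the \emph{derived} tensor product at the level of unbounded derived categories, which follows because $\pi^{-1}$ is exact and preserves $K$-flat complexes (so it may be computed termwise on any $K$-flat resolution). I would also mention that if $F_1, F_2$ are sheaves, taking $H_0$ recovers the classical cross-product map $H_0(X_1,F_1)\otimes H_0(X_2,F_2)\to H_0(X_1\times X_2,F_1\boxtimes F_2)$ built from the cosections.
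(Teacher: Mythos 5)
Your construction is exactly the paper's own proof: pull back the unit morphisms $F_i\to\pi_{X_i}^{-1}\LL(X_i,F_i)$ along $\pi_i^{-1}$, tensor them, identify $\pi_i^{-1}\pi_{X_i}^{-1}=\pi^{-1}$ and use that $\pi^{-1}$ commutes with $\overset\LL\otimes$, then transpose through the adjunction $(\LL(X_1\times X_2,\quad),\pi^{-1})$. The only slip is in your closing aside, where the induced map on $H_0$ goes the other way, namely $H_0(X_1\times X_2,F_1\boxtimes F_2)\to H_0(X_1,F_1)\otimes H_0(X_2,F_2)$; the proof itself is fine.
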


\begin{proof} The unit morphisms $F_i\to\pi_{X_i}^{-1}\LL(X_i,F_i)$ induce morphisms $$\pi_i^{-1} F_i\to \pi_i^{-1}\pi_{X_i}^{-1}\LL(X_i,F_i)=\pi_{X_1\times X_2}^{-1} \LL(X_i,F_i)$$  hence a morphism
\[ F_1\boxtimes F_2\to \pi_{X_1\times X_2}^{-1} \LL(X_1,F_1)\overset\LL\otimes \pi_{X_1\times X_2}^{-1} \LL(X_2,F_2) = \pi_{X_1\times X_2}^{-1} [\LL(X_1,F_1)\overset\LL\otimes \LL(X_2,F_2)]\] and then a morphism
\[  \LL(X_1\times X_2, F_1\boxtimes F_2)\to \LL(X_1,F_1)\overset\LL\otimes \LL(X_2,F_2) .\]
\end{proof}

Our aim now is to show that the K\"{u}nneth morphism is an isomorphism under local cohomological triviality assumptions (Theorem \ref{Kunneth}). For this, let us first construct a standard $\La$-acyclic resolution of any complex of sheaves.

\subsubsection{Standard resolution}\label{standard-res}

We assume that $X$ is locally connected and locally cohomologically trivial. For any $F\in D(X)$ we shall construct an $\La$-acyclic complex ${\mathcal C}_\pun F$ and a quasi-isomorphism ${\mathcal C}_\pun F\to F$, which are functorial on $F$. The construction mimics that of a flat resolution of  $F$. 

\begin{defn} Let $F$ be a sheaf on $X$.  We define $${\mathcal C}_0F:=\underset{U}\bigoplus\, \ZZ_U^{\oplus F(U)}$$ where $U$ runs over the set of connected and cohomologically trivial open subsets of $X$. \end{defn}

The equality $\Hom_{\Shv(X)}(\ZZ_U,F)=F(U)$ induces a natural morphism 
\[ {\mathcal C}_0F\to F\]  which is an epimorphism, because  the set of cohomologically trivial open subsets is a basis of $X$. Moreover, ${\mathcal C}_0F$ is $\La$-acyclic by Proposition \ref{basic-L-acyclic}. Now, to construct a resolution of $F$ by $\La$-acyclics one proceeds in the standard way:

Let $F_1$ be the kernel of ${\mathcal C}_0F\to F$.  We define ${\mathcal C}_1F:= {\mathcal C}_0F_1$ and  $F_2$ the kernel of ${\mathcal C}_0F_1\to F_1$. Recurrently, one defines ${\mathcal C}_iF:= {\mathcal C}_0F_i$ and $F_{i+1}$ the kernel of ${\mathcal C}_0F_i\to F_i$. One obtains a complex
\[  {\mathcal C}_\pun F= \cdots \to  {\mathcal C}_iF\to\cdots \to  {\mathcal C}_1F\to  {\mathcal C}_0F\] which is a resolution of $F$ by $\La$-acyclic sheaves, and then $ {\mathcal C}_\pun F$ is $L$-acyclic.

If $F$ is a bounded above complex of sheaves, we define ${\mathcal C}_\pun F$ as the simple complex associated to the bicomplex ${\mathcal C}_p F_q$. One has a quasi-isomorphism ${\mathcal C}_\pun F\to F$ and ${\mathcal C}_\pun F$ is $\La$-acyclic.

Finally, for a general (unbounded) complex $F$, let us consider the truncations $\tau_{\leq n}F$ and the morphisms of complexes $\phi_n\colon \tau_{\leq n}F\to \tau_{\leq n+1}F$. We define ${\mathcal C}_\pun F:=\ilim{n}\, {\mathcal C}_\pun (\tau_{\leq n}F)$. One has a quasi-isomorphism ${\mathcal C}_\pun F\to F$ . From the exact sequence
\[ 0\to \oplus_n {\mathcal C}_\pun (\tau_{\leq n}F)\overset{1-\phi_n} \longrightarrow \oplus_n {\mathcal C}_\pun (\tau_{\leq n}F)\to {\mathcal C}_\pun F\to 0\] one obtains an exact sequence ($\La(X,\quad)$ commutes with direct limits and direct sums) \[ 0\to \oplus_n \La (X,{\mathcal C}_\pun (\tau_{\leq n}F)) \to \oplus_n \La(X, {\mathcal C}_\pun (\tau_{\leq n}F))\to \La(X,{\mathcal C}_\pun F)\to 0\] and an exact triangle
\[      \oplus_n \LL(X,{\mathcal C}_\pun (\tau_{\leq n}F)) \to \oplus_n \LL(X, {\mathcal C}_\pun (\tau_{\leq n}F))\to \LL(X,{\mathcal C}_\pun F)  \] and a commutative diagram of exact triangles

\[ \xymatrix{ \oplus_n \La(X,{\mathcal C}_\pun (\tau_{\leq n}F))\ar[r] & \oplus_n \La(X,{\mathcal C}_\pun (\tau_{\leq n}F))\ar[r] & \La(X,{\mathcal C}_\pun F)\\ \oplus_n \LL(X,{\mathcal C}_\pun (\tau_{\leq n}F)) \ar[r]\ar[u]^{\wr} & \oplus_n \LL(X, {\mathcal C}_\pun (\tau_{\leq n}F))\ar[r]\ar[u]^{\wr} & \LL(X,{\mathcal C}_\pun F) \ar[u] 
}\] and we conclude that  ${\mathcal C}_\pun F$ is $\La$-acyclic. Thus, we have proved:

\begin{thm}\label{enough-L-acyclics} If $X$ is locally cohomologically trivial, the category of complexes of sheaves on $X$ has enough $\La$-acyclics. More precisely, for each complex of sheaves $F$ there are an $\La$-acyclic complex ${\mathcal C}_\pun F$ and a quasi-isomorphism ${\mathcal C}_\pun F\to F$ which are functorial in $F$. Hence
\[ \LL(X,F)\simeq \La (X, {\mathcal C}_\pun F).\] 
\end{thm}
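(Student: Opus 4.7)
The plan is to build the functorial $\La$-acyclic resolution in three stages of increasing generality, mimicking the classical construction of a flat resolution.

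\textbf{Single sheaves.} For a sheaf $F$ I would set
\[ \mathcal{C}_0 F := \bigoplus_U \ZZ_U^{\oplus F(U)}, \]
with $U$ running over the connected cohomologically trivial open subsets. The tautological evaluation map $\mathcal{C}_0 F \to F$ is a stalkwise surjection because such $U$ form a basis under the local hypothesis on $X$. Since each $\ZZ_U$ is $\La$-acyclic by Proposition \ref{basic-L-acyclic} and $\LL(X,\quad)$ commutes with direct sums, $\mathcal{C}_0 F$ is $\La$-acyclic. Setting $F_1 := \ker(\mathcal{C}_0 F \to F)$ and iterating with $\mathcal{C}_i F := \mathcal{C}_0 F_i$ gives a functorial resolution $\mathcal{C}_\pun F \to F$ by $\La$-acyclic sheaves.

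\textbf{Bounded above complexes.} For $F \in D^{-}(X)$ I would take the direct-sum total complex of the bicomplex $\mathcal{C}_p F^q$. A standard assembly argument shows $\mathcal{C}_\pun F \to F$ is a quasi-isomorphism, and because every bounded above complex of $\La$-acyclics is $\La$-acyclic (already remarked in the paper), so is $\mathcal{C}_\pun F$.

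\textbf{Unbounded complexes.} For arbitrary $F$ I would set $\mathcal{C}_\pun F := \ilim{n}\, \mathcal{C}_\pun(\tau_{\leq n} F)$ along the truncation tower. The natural map $\mathcal{C}_\pun F \to F$ is a quasi-isomorphism since it is so on every truncation. The crux is $\La$-acyclicity: starting from the Milnor-telescope short exact sequence
\[ 0 \to \bigoplus_n \mathcal{C}_\pun(\tau_{\leq n} F) \overset{1-\phi_n}\longrightarrow \bigoplus_n \mathcal{C}_\pun(\tau_{\leq n} F) \to \mathcal{C}_\pun F \to 0, \]
I would apply $\La(X,\quad)$, which is right exact and commutes with direct sums, to get a short exact sequence of complexes, and separately apply $\LL(X,\quad)$, which also commutes with direct sums, to get an exact triangle. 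Comparison via the canonical natural transformation $\LL(X,\quad) \to \La(X,\quad)$ yields a commutative diagram whose two outer vertical arrows are isomorphisms by the bounded above case, forcing the third to be an isomorphism. Hence $\mathcal{C}_\pun F$ is $\La$-acyclic and we obtain the claimed formula $\LL(X,F) \simeq \La(X, \mathcal{C}_\pun F)$.

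The main obstacle lies in this last step: arbitrary filtered colimits of $\La$-acyclic objects need not remain $\La$-acyclic in the derived sense, and the argument hinges on the precise compatibility of the Milnor telescope with both $\LL(X,\quad)$ and $\La(X,\quad)$, which is available only because each of them commutes with direct sums and $\La(X,\quad)$ is right exact.
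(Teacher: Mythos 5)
Your proposal is correct and follows essentially the same route as the paper: the same generators $\mathcal{C}_0F=\bigoplus_U\ZZ_U^{\oplus F(U)}$ over connected cohomologically trivial opens, the same iterated-kernel and totalization steps, and the same Milnor-telescope comparison of $\La(X,\quad)$ and $\LL(X,\quad)$ for the passage to unbounded complexes. The only cosmetic difference is that you make explicit the concern about filtered colimits of $\La$-acyclics, which the paper handles implicitly by the very same telescope diagram.
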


\begin{rem} $\C_\pun F$ is also a flat resolution of $F$.
\end{rem}

\begin{thm}[See {\cite[Ch. V, Thm 13.1]{Bredon}}]\label{Kunneth} Assume that $X_1,X_2$ are locally cohomologically trivial topological spaces. If $U_1\times U_2$ is cohomologically trivial for any cohomological trivial open subsets $U_1,U_2$ of $X_1,X_2$ (it would suffice this condition on a basis of cohomological trivial open subsets of $X_1$ and $X_2$), then for  any $F_1\in D(X_1)$, $F_2\in D(X_2)$ the K\"{u}nneth morphism
\[ \LL(X_1\times X_2, F_1\boxtimes F_2)\to \LL(X_1,F_1)\overset\LL\otimes \LL(X_2,F_2)\]
is an isomorphism. Hence one has an exact sequence
\[ 0\to \underset{p+q=n}\bigoplus H_p(F_1)\otimes H_q(F_2)\to H_n( F_1\boxtimes F_2)\to \underset{p+q=n-1}\bigoplus \operatorname{Tor}_1(H_p(F_1), H_q(F_2))\to 0,\] where we have used the abbreviated notation $H_i(F):=H_i(X,F)$.
\end{thm}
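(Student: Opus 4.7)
The plan is to compute both sides using the standard $\La$-acyclic resolutions ${\mathcal C}_\pun F_i$ constructed in section \ref{standard-res}, and to exploit the product structure of such resolutions. The essential observation is that the terms of ${\mathcal C}_\pun F_i$ are direct sums of sheaves $\ZZ_U$ for $U$ a connected cohomologically trivial open subset of $X_i$, and the hypothesis on the product of cohomologically trivial opens is precisely what is needed so that such building blocks behave well under $\boxtimes$.

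First I would verify the assertion for the basic pieces $F_i = \ZZ_{U_i}$ with $U_i$ connected and cohomologically trivial. One has $\ZZ_{U_1} \boxtimes \ZZ_{U_2} = \ZZ_{U_1\times U_2}$; by hypothesis $U_1\times U_2$ is cohomologically trivial, and then by Proposition \ref{inmersiones} and Corollary \ref{coh-trivial},
\[ \LL(X_1\times X_2, \ZZ_{U_1\times U_2}) = \LL(U_1\times U_2,\ZZ) = \ZZ = \ZZ\overset\LL\otimes\ZZ = \LL(X_1,\ZZ_{U_1})\overset\LL\otimes\LL(X_2,\ZZ_{U_2}),\]
and the K\"unneth morphism is the identity on $\ZZ$. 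In particular, $\pi_1^{-1}\ZZ_{U_1}\otimes\pi_2^{-1}\ZZ_{U_2}=\ZZ_{U_1\times U_2}$ is $\La$-acyclic on $X_1\times X_2$ by Proposition \ref{basic-L-acyclic}.

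Next I would form the external tensor product of the standard resolutions. Since $\C_\pun F_i$ is also a flat resolution of $F_i$, the complex $\pi_1^{-1}\C_\pun F_1\otimes\pi_2^{-1}\C_\pun F_2$ (ordinary tensor product) represents $F_1\boxtimes F_2$ in $D(X_1\times X_2)$. Each of its components is a direct sum of sheaves $\ZZ_{U_1\times U_2}$ with $U_1\times U_2$ cohomologically trivial, hence $\La$-acyclic. To conclude that the total complex is itself $\La$-acyclic I would imitate the proof of Theorem \ref{enough-L-acyclics}: for bounded above $F_i$ the bicomplex is bounded above termwise, so the standard fact that a bounded above complex of $\La$-acyclics is $\La$-acyclic applies; for arbitrary $F_i$, write $\C_\pun F_i = \ilim{n}\C_\pun(\tau_{\leq n}F_i)$ and use that $\La(X_1\times X_2,\quad)$ commutes with direct sums and direct limits together with the exact sequence $0\to\bigoplus_n\cdot\overset{1-\phi_n}{\longrightarrow}\bigoplus_n\cdot\to\ilim{n}\cdot\to 0$ as in loc. cit.

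Once $\La$-acyclicity is secured, both sides admit a common description: applying $\La(X_1\times X_2,\quad)$ termwise and using Step~1 on each summand yields
\[ \LL(X_1\times X_2, F_1\boxtimes F_2) \simeq \La(X_1,\C_\pun F_1)\otimes \La(X_2,\C_\pun F_2) \simeq \LL(X_1,F_1)\overset\LL\otimes\LL(X_2,F_2),\]
where the last isomorphism uses that the $\C_\pun F_i$ are flat $\La$-acyclic resolutions. To finish, one must check that this chain of identifications coincides with the K\"unneth morphism constructed earlier; this is a naturality check, since both maps are induced by the unit morphisms $F_i\to \pi_{X_i}^{-1}\LL(X_i,F_i)$ and compatibility with $\boxtimes$. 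The split short exact sequence for $H_n(F_1\boxtimes F_2)$ then follows at once from the algebraic K\"unneth formula applied to $\LL(X_1,F_1)\overset\LL\otimes \LL(X_2,F_2)$. The main obstacle is the unbounded case in the $\La$-acyclicity argument; once the direct-limit gymnastics of section \ref{standard-res} is adapted to the bicomplex, the rest reduces to functoriality and the one-point computation above.
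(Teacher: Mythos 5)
Your proposal is correct and follows essentially the same route as the paper: reduce to the building blocks $\ZZ_{U_1}\boxtimes\ZZ_{U_2}=\ZZ_{U_1\times U_2}$, show the external tensor product of the standard resolutions is $\La$-acyclic (handling the unbounded case by the direct-limit argument of the standard resolution), and then compute cosections termwise. The only cosmetic difference is that the paper pins the final identification on the flatness of the complexes of abelian groups $\La(X_i,\C_\pun F_i)$ rather than of the $\C_\pun F_i$ themselves, which is the precise fact your last isomorphism needs.
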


 \begin{proof} For any   open subsets $U_1,U_2$ of $X_1,X_2$, one has  $\ZZ_{U_1}\boxtimes \ZZ_{U_2}=\ZZ_{U_1\times U_2}$; hence, by hypothesis,  
$\ZZ_{U_1}\boxtimes \ZZ_{U_2}$ is $\La$-acyclic if $U_1,U_2$ are cohomologically trivial, and (assuming $U_i$ connected) 
\[\label{1}\tag{1} \La(X_1\times X_2,\ZZ_{U_1}\boxtimes \ZZ_{U_2})=\ZZ=\La(X_1,\ZZ_{U_1})\otimes \La(X_1,\ZZ_{U_1}).\]

Let $\C_\pun F_i$ be the  standard resolution of $F_i$ (see \ref{standard-res}). Recall that $\C_\pun F_i=\ilim{}  \C_\pun(\tau_{\leq n}F_i)$ and $\C_j(\tau_{\leq n}F_i)$ is a direct sum of sheaves of the form $\ZZ_{U_i}$, with $U_i$ cohomologically trivial. Then $\pi_i^{-1}\C_\pun F_i$ is a flat resolution of $\pi_i^{-1} F_i$ and then
\[ F_1\boxtimes F_2= (\pi_1^{-1}\C_\pun F_1)\otimes (\pi_2^{-1}\C_\pun F_2)=\ilim{} [\pi_1^{-1}\C_\pun(\tau_{\leq n}F_1)\otimes  \pi_2^{-1}\C_\pun(\tau_{\leq n}F_2)]. \] 
Since $\pi_1^{-1}\C_\pun(\tau_{\leq n}F_1)\otimes \pi_2^{-1} \C_\pun(\tau_{\leq n}F_2)$ is a bounded above  complex of sheaves that are direct sums of sheaves of the form $\ZZ_{U_1}\boxtimes \ZZ_{U_2}$, we conclude that it is $\La$-acyclic, and then $(\pi_1^{-1}\C_\pun F_1)\otimes (\pi_2^{-1}\C_\pun F_2)$ is also $\La$-acyclic. Then
\[ \LL(X_1\times X_2,F_1\boxtimes F_2)=\La(X_1\times X_2, (\pi_1^{-1}\C_\pun F_1)\otimes (\pi_2^{-1}\C_\pun F_2)).\] Since $\La(X_1\times X_2,\quad)$ commutes with direct limits and direct sums,  the equality $(1)$ yields an isomorphism
\[ \La(X_1\times X_2, (\pi_1^{-1}\C_\pun F_1)\otimes (\pi_2^{-1}\C_\pun F_2))=\La(X_1,\C_\pun F_1)\otimes \La(X_2,\C_\pun F_2)\] and we conclude because $\La(X_i,\C_\pun F_i)$ is a flat complex of abelian groups and $\LL(X_i,F_i)\simeq \La(X_i,\C_\pun F_i)$.

\subsubsection{Comparison with Borel-Moore homology} Let $X$ be a locally compact Haussdorff space. Assume that $X$ is locally connected and locally cohomologically trivial. For any sheaf $F$, let us denote $H_i^c(X,F)$ its Borel-Moore homology groups with compact support. All the following properties of Borel-Moore homology may be found in \cite{Bredon}. If $U$ is a connected and cohomologically trivial open subset, then $H^c_i(X,\ZZ_U)=0$ for $i>0$ and $H^c_0(X,\ZZ_U)=\ZZ$. Since Borel-Moore homology commutes with direct sums, we obtain that $H^c_i(X,\C_0F)=0$ for $i>0$ and $H^c_0(X,\C_0F)= H_0(X,\C_0F)=\La(X,\C_0F)$. Then $H^c_i(X,F)$ may be computed with the standard resolution $\C_\pun F$, i.e., $H^c_i(X,F)\simeq H_i[H^c_0(X,\C_\pun F)]\overset{\ref{enough-L-acyclics}}\simeq H_i(X,F)$.
%
\end{proof}

\subsection{Comparison with Poincar\'e-Verdier duality}\label{Sec-Poincare-Verdier}$\,$ Let us consider two types of topological spaces where one has  a Poincar\'e-Verdier duality theory:

 (A)  $X$ is Haussdorff, locally compact and locally of finite dimension. Let $\pi\colon X\to\{*\}$ the projection to a point. Then (see \cite{Spaltenstein}):

 The functor of cohomology with compact support $$\RR\Gamma_c(X,\quad)\colon D (X)\to D (\ZZ)$$ has a right adjoint $$\pi^!\colon D^+(\ZZ)\to D^+(X).$$  We shall denote $D_X=\pi^!\ZZ$, the dualizing complex. For any open subset $U$, one has $(D_X)_{\vert U}=D_U$. For any $G\in D(\ZZ)$ there is a morphism
  \[ \pi^{-1}G\overset\LL\otimes D_X\to\pi^{!} G.\] Indeed, applying $\RR\Hom^\pun(\quad,G)$ to the unit morphism $\RR\Gamma_c(X,D_X)\to\ZZ$, gives a morphism
\[ G\to \RR\Hom^\pun(\RR\Gamma_c(X,D_X),G)=\RR\Hom^\pun(D_X,\pi^!G)=\RR\pi_*\RR\HHom^\pun (D_X,\pi^!G)\] and then a morphism $\pi^{-1} G\to \RR\HHom^\pun (D_X,\pi^!G)$, i.e., a morphism $\pi^{-1}G\overset\LL\otimes D_X\to\pi^{!} G$.

(B) $X$ is a finite topological space. The functor $$ \RR\Gamma (X,\quad)\colon D (X)\to D (\ZZ)$$ has a right adjoint (see \cite{Navarro}, or \cite{Sanchoetal}), that we also denote by $\pi^!$ (and then $D_X=\pi^!\ZZ$).   An important difference with (A) is that, for an open subset $U$, $(D_X)_{\vert U}$ needs not coincide with $D_U$. As in (A), one has a natural morphism $\pi^{-1}G\overset\LL\otimes D_X\to\pi^{!} G$ (by the same arguments).

In order to treat simultaneously both cases let us consider the following definition.

\begin{defn} A {\em duality theory} on $X$ is an exact functor $\omega\colon D(X)\to D(\ZZ)$ that has a right adjoint $\omega^!\colon D(\ZZ)\to D(X)$. We shall denote $D_X^\omega=\omega^!(\ZZ)$. For each $G\in D(\ZZ)$, one has a natural morphism
\[ \tag{1}\label{dual}\pi^{-1}G\overset\LL\otimes D_X^\omega\to \omega^!G.\] For each $F\in D(X)$, we shall denote
\[ H^i_\omega(X,F):=H^i(\omega(F)).\]

\begin{rem} If we take $\omega=\LL(X,\quad)$, then $\omega^!=\pi^{-1}$, the morphism $\eqref{dual}$ is the identity and $H^i_\omega(X,F)=H_{-i}(X,F)$.
\end{rem}

If $j\colon U\hookrightarrow X$ is an open subset of $X$, we shall denote $\omega_{\vert U}=\omega\circ j_!$, whose right adjoint is $j^{-1}\circ \omega^!$. Then $D_U^{\omega_{\vert U}}=(D_X^\omega)_{\vert U}$. In case (A), $\omega=\RR\Gamma_c(X,\quad)$ and $\omega_{\vert U}=\RR\Gamma_c(U,\quad)$. However, in case (B), $\omega=\RR\Gamma(X,\quad)$ but $\omega_{\vert U}$ needs not agree with $\RR\Gamma(U,\quad)$. Thus, in case (B) we have two different duality theories on $U$: $\RR\Gamma(U,\quad)$ and $\RR\Gamma(X,\quad)_{\vert U}$.
\end{defn}

\begin{prop}\label{PV-comparison} Let $(X,\omega)$ be a topological space with a duality theory. For any $F\in D(X)$ there is a natural morphism
$$\Phi_F\colon \omega( F\overset\LL\otimes D^\omega_X)\to \LL(X,F) $$ hence a morphism
\[ H^{-i}_\omega (X,F\overset\LL\otimes D^\omega_X)\to H_i(X,F).\]
\end{prop}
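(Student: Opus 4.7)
\emph{Plan.} The morphism $\Phi_F$ will be constructed by chaining four natural maps, all of which come for free from the available data: the unit of $\LL(X,\quad) \dashv \pi^{-1}$, the derived tensor functor, the structural morphism \eqref{dual} of the duality theory, and the counit of $\omega \dashv \omega^!$. Naturality in $F$ will be automatic.

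\emph{Step 1.} Start from the unit map $\eta_F \colon F \to \pi^{-1}\LL(X,F)$ of the adjunction $\LL(X,\quad) \dashv \pi^{-1}$ (cf.\ Theorem \ref{derivedproduct}).

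\emph{Step 2.} Derived-tensor $\eta_F$ with $D_X^\omega$ to obtain
\[
F \overset{\LL}{\otimes} D_X^\omega \;\longrightarrow\; \pi^{-1}\LL(X,F) \overset{\LL}{\otimes} D_X^\omega.
\]

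\emph{Step 3.} Apply the structural morphism \eqref{dual} of the duality theory with $G = \LL(X,F)$ to get
\[
\pi^{-1}\LL(X,F) \overset{\LL}{\otimes} D_X^\omega \;\longrightarrow\; \omega^{!}\LL(X,F).
\]

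\emph{Step 4.} Apply $\omega$ and postcompose with the counit $\omega\omega^{!} \to \id$ of the adjunction $\omega \dashv \omega^{!}$:
\[
\omega\bigl(F \overset{\LL}{\otimes} D_X^\omega\bigr) \;\longrightarrow\; \omega\omega^{!}\LL(X,F) \;\longrightarrow\; \LL(X,F).
\]
The composite of Steps 1--4 is the desired $\Phi_F$. Taking $H^{-i}$ yields the stated morphism $H^{-i}_\omega(X, F \overset{\LL}{\otimes} D_X^\omega) \to H_i(X,F)$.

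\emph{Alternative via Yoneda.} Equivalently, one can construct for each $G \in D(\ZZ)$ a natural map
\[
\Hom_{D(\ZZ)}(\LL(X,F), G) \;\longrightarrow\; \Hom_{D(\ZZ)}(\omega(F \overset{\LL}{\otimes} D_X^\omega), G)
\]
by successively using the three adjunctions $\LL(X,\quad) \dashv \pi^{-1}$, the tensor--$\RR\HHom^\pun$ adjunction, and $\omega \dashv \omega^{!}$; the step requiring input beyond formal adjunction is exactly the map \eqref{dual}, applied to $G$. Yoneda then delivers $\Phi_F$, and the triangle identities show it agrees with the composite built in Steps 1--4.

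\emph{Expected difficulty.} There is no genuine obstacle: every ingredient (unit, counit, the morphism \eqref{dual}, and the $\otimes$--$\RR\HHom^\pun$ adjunction) is already part of the data. The only care needed is bookkeeping with the unbounded derived categories, but since $\pi^{-1}$ admits a left adjoint by assumption and $\omega$ admits a right adjoint by definition of a duality theory, all manipulations are legitimate.
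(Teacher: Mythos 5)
Your construction is exactly the paper's proof: tensor the unit $F\to\pi^{-1}\LL(X,F)$ with $D_X^\omega$, compose with the structural morphism \eqref{dual} applied to $G=\LL(X,F)$, and transpose across the adjunction $\omega\dashv\omega^!$ (your Step 4 with the counit is precisely this transposition). The argument is correct and needs no further comment.
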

\begin{proof}  The unit morphism $\epsilon\colon F\to \pi^{-1}\LL(X,F)$ induces a morphism $$F\overset\LL\otimes D^\omega_X\overset{\epsilon\otimes 1}\to \pi^{-1}\LL(X,F)\overset\LL\otimes D^\omega_X \overset{\eqref{dual}}\to \omega^!\,\LL(X,F)$$ and then a morphism
\[ \omega( F\overset\LL\otimes D^\omega_X)\to \LL(X,F).\]
\end{proof}


The characterization of those topological spaces $(X,\omega)$ for which $H^{-i}_\omega (X,F\overset\LL\otimes D^\omega_X)\to H_i(X,F)$ is an isomorphism for any $F$ is given in the following:

\begin{thm}[See {\cite[Ch.V, Thm. 9.2]{Bredon}}]\label{Poincare-Verdier} Let $(X,\omega)$ be a topological space with a duality theory. The following conditions are equivalent
\begin{enumerate}
\item $H^{-i}_\omega (X,F\overset\LL\otimes D^\omega_X)\to H_i(X,F)$ is an isomorphism for any $F\in D(X)$ and any $i$.
\item There is a basis $\frak B$ of open subsets such that $H^{-i}_\omega(X,\ZZ_U\otimes D_X^\omega)\to H_i(U,\ZZ)$ is an isomorphism for any $U\in\mathfrak B$ and any $i$. 
\item The natural morphism $\ZZ\to\RR\HHom^\pun(D^\omega_X,D^\omega_X)$ is an isomorphism.
\end{enumerate}
{\rm In any of these cases, we shall say that $(X,\omega)$ is a {\em $PV$-space} (a Poincar\'e-Verdier space).}
\end{thm}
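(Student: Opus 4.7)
The plan is to establish (1) $\Rightarrow$ (2), (2) $\Rightarrow$ (1), and (1) $\Leftrightarrow$ (3); the adjunction reformulation of (1) will drive the equivalence with (3). The implication (1) $\Rightarrow$ (2) is trivial, obtained by applying (1) to $F = \ZZ_U$ for $U$ in any basis. For (2) $\Rightarrow$ (1), I would use a localizing-subcategory argument: both functors $F \mapsto \omega(F \overset{\LL}{\otimes} D_X^\omega)$ and $F \mapsto \LL(X, F)$ are triangulated and commute with arbitrary direct sums (being left adjoints combined with the derived tensor product). Hence the full subcategory $\mathcal{S} \subseteq D(X)$ on which $\Phi_F$ is an isomorphism is localizing. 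Condition (2) says $\ZZ_U \in \mathcal{S}$ for $U \in \mathfrak{B}$; using the Mayer--Vietoris triangle (Proposition \ref{MV1}) one extends this to all open $U$, and since the family $\{\ZZ_U\}_{U \text{ open}}$ generates $D(X)$ as a localizing subcategory, $\mathcal{S} = D(X)$.

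For the equivalence (1) $\Leftrightarrow$ (3), the central observation is an adjunction reformulation. Applying $\RR\Hom^\pun(-, G)$ to $\Phi_F$ for $G \in D(\ZZ)$ and using the adjunctions $(\LL(X,-), \pi_X^{-1})$, $(\omega, \omega^!)$, and tensor--hom, one computes
\[ \RR\Hom^\pun(\LL(X,F), G) = \RR\Hom^\pun(F, \pi_X^{-1} G) \]
and
\[ \RR\Hom^\pun(\omega(F \overset{\LL}{\otimes} D_X^\omega), G) = \RR\Hom^\pun(F, \RR\HHom^\pun(D_X^\omega, \omega^! G)). \]
By Yoneda, $\Phi_F$ is an isomorphism for every $F \in D(X)$ if and only if the morphism
\[ \Psi_G \colon \pi_X^{-1} G \to \RR\HHom^\pun(D_X^\omega, \omega^! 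G) \]
is an isomorphism in $D(X)$ for every $G \in D(\ZZ)$. Specializing $G = \ZZ$ (so $\omega^!\ZZ = D_X^\omega$) recovers (3) exactly, giving (1) $\Rightarrow$ (3).

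The main obstacle is (3) $\Rightarrow$ (1): promoting $\Psi_\ZZ$ being an isomorphism to $\Psi_G$ being one for every $G$. I plan to argue by devissage on $G$: since $\ZZ$ is hereditary, every $G \in D(\ZZ)$ splits as $\bigoplus_n H^n(G)[-n]$, and each $H^n(G)$ fits in a short exact sequence of free abelian groups, which reduces the question to $G = \ZZ$ provided both sides of $\Psi_G$ commute suitably with these operations. The left-hand side $\pi_X^{-1}$ does so as a left adjoint. For the right-hand side, I would factor $\Psi_G$ through the projection-formula composition
\[ \pi_X^{-1} G \overset{\LL}{\otimes} \RR\HHom^\pun(D_X^\omega, D_X^\omega) \to \RR\HHom^\pun(D_X^\omega, \pi_X^{-1} G \overset{\LL}{\otimes} D_X^\omega) \to \RR\HHom^\pun(D_X^\omega, \omega^! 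G), \]
using the canonical morphism $\pi_X^{-1} G \overset{\LL}{\otimes} D_X^\omega \to \omega^! G$; under (3) the source becomes $\pi_X^{-1} G$. The delicate verification is that this composition agrees with $\Psi_G$ and that each arrow is an isomorphism. Alternatively, one may prove (3) $\Rightarrow$ (2) directly, exploiting that (3) localizes to every open $U$ (via $(D_X^\omega)_{|U} = D_U^{\omega_{|U}}$), which after identification yields (2) on a basis, and then conclude by the already-established (2) $\Rightarrow$ (1).
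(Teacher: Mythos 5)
Your outer structure has two sound legs: (1) $\Rightarrow$ (2) is immediate (via $\LL(U,\ZZ)=\LL(X,\ZZ_U)$, Proposition \ref{inmersiones}), and the Yoneda reformulation giving (1) $\Rightarrow$ (3) is correct. Your (2) $\Rightarrow$ (1) by localizing subcategories is a genuinely different route from the paper's and is essentially workable, though the Mayer--Vietoris step is shaky as stated ($U\cap V$ need not lie in $\mathfrak B$); it is cleaner to note that $\{\ZZ_U\}_{U\in\mathfrak B}$ already generates $\Shv(X)$ as an abelian category, hence (resolving bounded-above complexes by direct sums of these and writing a general complex as a homotopy colimit of truncations) generates $D(X)$ as a localizing subcategory, so $\mathcal S=D(X)$ directly.

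The genuine gap is (3) $\Rightarrow$ (1), which you flag as the main obstacle but do not close, and neither of your sketches can be completed as stated. In the projection-formula factorization, the arrows $\pi_X^{-1}G\overset\LL\otimes\RR\HHom^\pun(D_X^\omega,D_X^\omega)\to\RR\HHom^\pun(D_X^\omega,\pi_X^{-1}G\overset\LL\otimes D_X^\omega)$ and $\pi_X^{-1}G\overset\LL\otimes D_X^\omega\to\omega^!G$ are not isomorphisms in general, and nothing in (3) controls them (the second is essentially the assertion you are trying to prove, in disguise); moreover the devissage over $G$ breaks because the target $G\mapsto\RR\HHom^\pun(D_X^\omega,\omega^!G)$ involves the right adjoint $\omega^!$, which need not commute with the infinite direct sums occurring in free resolutions of an arbitrary abelian group. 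Your alternative ``(3) restricted to $U$ yields (2)'' begs the question: restriction of (3) gives that $\RR\Gamma(U,\ZZ)\to\RR\Gamma(U,\RR\HHom^\pun(D_X^\omega,D_X^\omega))$ is an isomorphism, which is the \emph{dual} of $\Phi_{\ZZ_U}$, not $\Phi_{\ZZ_U}$ itself. The missing ingredient --- and the paper's key device --- is the detection principle recalled in its notation section (citing Bredon, Ch.~V, Prop.~13.7): a complex of abelian groups is zero iff its derived dual $\RR\Hom^\pun(\,\cdot\,,\ZZ)$ is, so a morphism in $D(\ZZ)$ is an isomorphism iff its dual is. With this, $\Phi_F^\vee$ identifies with the map $\RR\Hom^\pun(F,\ZZ)\to\RR\Hom^\pun(F,\RR\HHom^\pun(D_X^\omega,D_X^\omega))$ induced by the morphism in (3), so (3) makes $\Phi_F^\vee$ an isomorphism for every $F$ and hence $\Phi_F$ itself; the same device converts the dualized statement over a basis back into (2). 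Without some such detection statement (it reduces checking an isomorphism in $D(\ZZ)$ to the single coefficient $G=\ZZ$, whereas your Yoneda reformulation needs all $G$), your chain of implications does not close.
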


\begin{proof} Notice that
\[ \omega(F\overset\LL\otimes D_X^\omega)^\vee=\RR\Hom^\pun(F\overset\LL\otimes D_X^\omega,D_X^\omega)=\RR\Hom^\pun(F,\RR\HHom^\pun(D_X^\omega,D_X^\omega))\] and then 
\[ \omega(\ZZ_U \otimes D_X^\omega)^\vee= \RR\Gamma(U,\RR\HHom^\pun(D_X^\omega,D_X^\omega)).\]

Then, $\Phi_F\colon \omega(F\overset\LL\otimes D_X^\omega)\to \LL(X,F)$ is an isomorphism for any $F$ if and only if $$\Phi_F^\vee\colon \RR\Hom^\pun(F,\ZZ)\to \RR\Hom^\pun(F,\RR\HHom^\pun(D_X^\omega,D_X^\omega))$$ is an isomorphism for any $F$, i.e., iff $\ZZ\to \RR\HHom^\pun(D_X^\omega,D_X^\omega))$ is an isomorphism. This gives the equivalence of (1) and (3). Analogously, $\Phi_U\colon \omega(\ZZ_U \otimes D_X^\omega)\to\LL(U,\ZZ)$ is an isomorphism for any $U\in\mathfrak B$ if and only if $\Phi_U^\vee\colon \RR\Gamma(U,\ZZ)\to\RR\Gamma(U,  \RR\HHom^\pun(D_X^\omega,D_X^\omega))$ is an isomorphism for any $U\in\mathfrak B$, i.e., iff 
$\ZZ\to \RR\HHom^\pun(D_X^\omega,D_X^\omega))$ is an isomorphism.
%
\end{proof}

\begin{rem}
(1) Any finite dimensional topological manifold $X$ (with or without boundary) is a $PV$-space, for $\omega=\RR\Gamma_c(X,\quad)$, since it satisfies condition (3) of Theorem \ref{Poincare-Verdier}.

(2) Let $(X,\omega)$ be a topological space with a duality theory and $U$ an open subset. If $(X,\omega)$ is a $PV$-space, so is $(U,\omega_{\vert U})$. Conversely, if $(U_i,\omega_{\vert U_i})$ is a $PV$-space for an open covering $X=\cup_i U_i$, then $(X,\omega)$ is a $PV$-space.

(3) Any topological space $X$ is a $PV$-space with respect to $\omega=\LL(X,\quad)$.
\end{rem}

If $Y$ is the boundary of a topologial manifold $X$ with boundary, then $\RR{\underline\Gamma}_Y\ZZ=0$. This motivates the following:

\begin{coro} Let $(X,\omega)$ be a topological space with a duality theory. Assume that $D_X^\omega$ is supported on an open subset $j\colon U\hookrightarrow X$ (i.e., $D_X=j_!j^{-1} D^\omega_X$) and that $(U,\omega_{\vert U})$ is a $PV$-space.     The following conditions are equivalent (let us denote $Y=X-U$):
\begin{enumerate}
\item $(X,\omega)$ is a $PV$-space.
\item $\RR{\underline\Gamma}_Y\ZZ=0$.
\item  For any $F\in D(X)$ and any $i$, $H_i^Y(X,F)=0$.
\end{enumerate}
\end{coro}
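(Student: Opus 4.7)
The plan is to pivot the proof on the object $\RR\underline{\Gamma}_Y\ZZ$: I would show (1)$\Leftrightarrow$(2) using the characterization of $PV$-spaces from Theorem \ref{Poincare-Verdier}(3), and (2)$\Leftrightarrow$(3) using the duality formula recorded in the definition of $\LL^Y$.

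For (1)$\Leftrightarrow$(2), set $D_U := j^{-1} D_X^\omega$, so the hypothesis reads $D_X^\omega = j_! D_U$. The sheafified $(j_!,j^{-1})$-adjunction yields
$$
\RR\HHom^\pun(D_X^\omega, D_X^\omega) = \RR\HHom^\pun(j_! D_U, D_X^\omega) = \RR j_* \RR\HHom^\pun(D_U, D_U),
$$
and since $(U,\omega_{\vert U})$ is a $PV$-space, Theorem \ref{Poincare-Verdier}(3) identifies $\RR\HHom^\pun(D_U,D_U)$ with the constant sheaf $\ZZ$ on $U$, so
$$
\RR\HHom^\pun(D_X^\omega, D_X^\omega) = \RR j_* j^{-1}\ZZ.
$$
By Theorem \ref{Poincare-Verdier}, $(X,\omega)$ is a $PV$-space iff the natural map $\ZZ \to \RR j_* j^{-1}\ZZ$ is an isomorphism, which by the standard exact triangle $\RR\underline{\Gamma}_Y\ZZ \to \ZZ \to \RR j_* j^{-1}\ZZ$ is in turn equivalent to $\RR\underline{\Gamma}_Y\ZZ = 0$.

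For (2)$\Leftrightarrow$(3), I would use the duality formula listed in the definition of $\LL^Y$:
$$
\LL^Y(X,F)^\vee = \RR\Hom^\pun(\LL^Y(X,F),\ZZ) = \RR\Hom^\pun(F_Y,\ZZ) = \RR\Hom^\pun(F, \RR\underline{\Gamma}_Y\ZZ).
$$
Since a complex of abelian groups is zero iff its dual is (a fact recalled in \S 2.1), condition (3) -- i.e., $\LL^Y(X,F) = 0$ for all $F$ -- is equivalent to $\RR\Hom^\pun(F,\RR\underline{\Gamma}_Y\ZZ) = 0$ for all $F \in D(X)$, and by Yoneda (e.g.\ test on $F = \RR\underline{\Gamma}_Y\ZZ$ to force the identity to vanish) this is equivalent to $\RR\underline{\Gamma}_Y\ZZ = 0$.

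The only non-routine ingredient is the sheafified adjunction $\RR\HHom^\pun(j_!A, B) = \RR j_* \RR\HHom^\pun(A, j^{-1} B)$, which is not explicit in the paper but is standard: it follows from the underived identity $\HHom(j_!A,B) = j_* \HHom(A, j^{-1}B)$ by resolving $B$ by a $K$-injective complex and using that $j^{-1}$ is exact and preserves $K$-injectivity for open immersions. I do not anticipate any serious obstacle.
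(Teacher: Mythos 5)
Your proposal is correct and follows essentially the same route as the paper: both compute $\RR\HHom^\pun(D_X^\omega,D_X^\omega)=\RR j_*\ZZ$ via the $(j_!,j^{-1})$-adjunction and the $PV$-property of $U$, use the triangle $\RR{\underline\Gamma}_Y\ZZ\to\ZZ\to\RR j_*\ZZ$ together with condition (3) of Theorem \ref{Poincare-Verdier} for (1)$\Leftrightarrow$(2), and deduce (2)$\Leftrightarrow$(3) from the identity $\LL^Y(X,F)^\vee=\RR\Hom^\pun(F,\RR{\underline\Gamma}_Y\ZZ)$. You merely make explicit the sheafified adjunction and the Yoneda-type step, which the paper leaves implicit.
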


\begin{proof} Let us denote $\omega'=\omega_{\vert U}$. By hypothesis, $D_X^\omega=j_!j^{-1}D_X^\omega = j_!D_U^{\omega'}$ and $\ZZ=\RR\HHom^\pun(D_U^{\omega'},D_U^{\omega'})$. Then
\[ \RR\HHom^\pun(D_X^\omega,D_X^\omega)= \RR\HHom^\pun(j_!D_U^{\omega'},j_!D_U^{\omega'})=\RR j_* \RR\HHom^\pun( D_U^{\omega'}, D_U^{\omega'})=\RR j_*\ZZ.\]
Moreover one has the exact triangle
\[ \RR{\underline\Gamma}_Y\ZZ\to\ZZ\to \RR j_*\ZZ.\] Hence, $X$ is a $PV$-space $\Leftrightarrow$ $\ZZ\to \RR j_*\ZZ$ is an isomorphism $\Leftrightarrow$ $ \RR{\underline\Gamma}_Y\ZZ=0$.

The equivalence of (2) and (3)follows from the equality $\LL^Y(X,F)^\vee=\RR\Hom^\pun(F,\RR{\underline\Gamma}_Y\ZZ)$.
\end{proof}

\subsubsection{Homological manifolds}\label{homologicalmanifolds}
In this section we assume that we are in case (A) or (B). We leave to the reader to give the natural generalizations to a topological space $X$ with a duality theory $\omega$.

\begin{defn} (See \cite[Ch.V, Def. 9.1]{Bredon}) Let $X$ be a topological space of type (A) or (B). We say that $X$ is an {\em $n$-dimensional  homological manifold} if $D_X \simeq \TT_X[n]$, where $\TT_X$ is a sheaf on $X$ locally isomorphic to the constant sheaf $\ZZ$. 
\end{defn}

\begin{rem} Any $n$-dimensional  homological manifold   is a $PV$-space, with respect to $\omega=\RR\Gamma_c(X,\quad)$ in case (A) (resp. $\omega=\RR\Gamma (X,\quad)$ in case (B)).
\end{rem}

%

\begin{thm} [See {\cite[Ch.V, Thm. 15.1]{Bredon}}]  Let $X$ be a connected topological of type {\rm (A)} or {\rm (B)}.  Then $X$ is an $n$-dimensional  homological manifold   (for some $n$) if and only if $D_X$ is perfect.
\end{thm}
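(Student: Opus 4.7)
The proof splits into a trivial and a substantive direction. For the forward implication, if $D_X \simeq \TT_X[n]$ with $\TT_X$ locally isomorphic to $\ZZ$, then each point has a neighborhood on which $D_X$ is isomorphic to the constant sheaf $\ZZ$ placed in degree $-n$: this is the simplest bounded complex of constant sheaves with finite free stalks, so $D_X$ is perfect by definition.

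For the converse, assume $D_X$ is perfect. By definition of perfect, each $x \in X$ admits a neighborhood $V_x$ and a bounded complex $E_x$ of finite free $\ZZ$-modules with $(D_X)_{\vert V_x} \simeq \pi_{V_x}^{-1}E_x$; taking stalks at $x$ then gives $(D_X)_x \simeq E_x$ in $D(\ZZ)$. The key step is to show $E_x \simeq \ZZ[-n(x)]$ for some integer $n(x)$. I would invoke the biduality isomorphism $\ZZ \overset\sim\to \RR\HHom^\pun(D_X, D_X)$ valid in cases (A) and (B) (the $PV$-space property). Since $D_X$ is perfect, $\RR\HHom^\pun(D_X, D_X)$ commutes with taking stalks, producing $\RR\Hom^\pun(E_x, E_x) \simeq \ZZ$. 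Now $E_x$ is a perfect complex of abelian groups, and because $\ZZ$ is a PID, it decomposes as $E_x \simeq \bigoplus_i H^i(E_x)[-i]$ with each $H^i(E_x)$ finitely generated; the identity $\RR\Hom^\pun(E_x, E_x) \simeq \ZZ$ then forces exactly one cohomology group to be non-zero and equal to $\ZZ$ (any torsion summand, any free part of rank $\geq 2$, or any second non-zero cohomology group would contribute extra $\Hom$ or $\Ext$ classes). Hence $E_x \simeq \ZZ[-n(x)]$ and $(D_X)_{\vert V_x} \simeq \ZZ_{V_x}[-n(x)]$.

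The globalization is then immediate: the local description shows $n(x)$ is locally constant on $X$, and connectedness forces $n(x) = n$ globally. Consequently all cohomology sheaves $H^i(D_X)$ vanish for $i \neq -n$, so setting $\TT_X := H^{-n}(D_X)$ yields a sheaf locally isomorphic to $\ZZ$ with $D_X \simeq \TT_X[n]$ in $D(X)$ (a complex with cohomology concentrated in one degree is quasi-isomorphic to that cohomology shifted).

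The main obstacle is justifying the biduality $\ZZ \simeq \RR\HHom^\pun(D_X, D_X)$ starting from perfection of $D_X$ in cases (A) and (B), without circularly assuming $X$ is already a manifold. In case (B) (finite spaces) this is a direct finite computation on the dualizing complex; in case (A) it should follow from Verdier biduality applied to the constant sheaf $\ZZ$, whose Verdier dual is $D_X$, which is perfect and therefore reflexive.
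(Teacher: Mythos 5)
Your forward direction, your stalkwise algebra (that $\RR\Hom^\pun(E_x,E_x)\simeq\ZZ$ for a perfect complex $E_x$ of abelian groups forces $E_x\simeq\ZZ[n]$ up to shift), and the final globalization by local constancy of $n$ and connectedness are all fine. The genuine gap is the step you yourself flag: the biduality $\ZZ\overset\sim\to\RR\HHom^\pun(D_X,D_X)$. This is exactly the $PV$-property (condition (3) of Theorem \ref{Poincare-Verdier}), and it is \emph{not} available for a general space of type (A) or (B); it holds for homological manifolds, and, given perfection, it is essentially equivalent to the conclusion you are proving. Indeed, by your own local computation, on a neighbourhood where $D_X\simeq\pi_U^{-1}E$ the map $\ZZ\to\RR\HHom^\pun(D_X,D_X)$ is the constant map $\ZZ\to\RR\Hom^\pun(E,E)$, which is an isomorphism precisely when $E\simeq\ZZ[n]$ --- the very thing to be shown. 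Your proposed repairs do not break the circle: ``perfect, hence reflexive'' concerns the naive dual $\RR\HHom^\pun(\,\cdot\,,\ZZ)$, not the Verdier dual $\RR\HHom^\pun(\,\cdot\,,D_X)$, and Verdier biduality for the constant sheaf is a constructibility-type statement that would itself have to be proved; in case (B) the ``direct finite computation'' is not given, and for a general finite space this biduality fails (only the as-yet-unproved conclusion would rescue it). Note also that nowhere in your converse do you use the defining property of $D_X$, namely the adjunction $\RR\Hom^\pun(F,D_X)=\RR\Hom^\pun(\omega(F),\ZZ)$; perfection alone cannot determine \emph{which} perfect complex $E$ appears locally, so some such input is unavoidable.

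The paper supplies that input by a local cohomology computation at a closed point $p$ instead of a stalk computation: $\RR\Gamma_p(X,D_X)=\RR\Hom^\pun(\ZZ_{\{p\}},D_X)=\RR\Hom^\pun(\omega(\ZZ_{\{p\}}),\ZZ)=\ZZ$, since $\omega(\ZZ_{\{p\}})=\RR\Gamma(p,\ZZ)=\ZZ$. Then on a neighbourhood $U\ni p$ with $(D_X)_{\vert U}\simeq\pi_U^{-1}D$ one gets $\ZZ=H^\pun_p(U,\ZZ)\otimes D$, and an elementary exercise on complexes of abelian groups yields $D\simeq\ZZ[n]$ (and $H^\pun_p(U,\ZZ)\simeq\ZZ[-n]$), so $(D_X)_{\vert U}\simeq\ZZ[n]$; such neighbourhoods of closed points cover $X$, and connectedness makes $n$ constant. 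If you replace your biduality step by this adjunction computation, the rest of your outline goes through. A minor bookkeeping point: you obtain $E_x\simeq\ZZ[-n(x)]$ but then set $\TT_X:=H^{-n}(D_X)$; with the paper's convention $D_X\simeq\TT_X[n]$ is concentrated in degree $-n$, so the signs in your last step should be straightened out.
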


\begin{proof} The direct is immediate. For the converse, it suffices to see that for any closed point $p\in X$ there is a neighbourhood $U\ni p$ such that $(D_X)_{\vert U}\simeq\ZZ[n]$. First notice that
\[ \RR\Gamma_p(X,D_X)=\RR\Hom^\pun(\ZZ_{\{p\}},D_X)=\RR\Hom^\pun (\RR\Gamma (p,\ZZ),\ZZ)=\ZZ,\] hence $H^\pun_p(X,D_X)=\ZZ$. Now, by hypothesis, there is a neighbourhood $U$ of $x$ such that $(D_X)_{\vert U}\simeq \pi_U^{-1} D$, where $\pi_U\colon U\to \{*\}$ is the projection to a point and $D$ is a bounded complex of free $\ZZ$-modules of finite rank.  Then
\[ \ZZ=H^\pun_p(X,D_X)=H^\pun_p(U,(D_X)_{\vert U})=H^\pun_p(U,\ZZ)\otimes D.\] This implies (it is an exercise on complexes of abelian groups) that, for some integer $n$, $H^\pun_p(U,\ZZ)\simeq \ZZ[-n]$ and $D\simeq \ZZ[n]$. Then $(D_X)_{\vert U}\simeq\ZZ[n]$.
\end{proof}

\section{Homology of sheaves and homology of groups}\label{homologyofgroups}

In this section we make an additional hypothesis on the topological space: we assume that $X$ is connected and locally simply connected (by simply connected we mean a topological space such that every covering is trivial). Let $$\phi\colon \wt X\to X$$ be a universal cover and $$G=\Aut_X\wt X$$ the fundamental group. 

Let us briefly summarize the content of this section. Let us denote $\LConst(X)$ the category of locally constant sheaves on $X$ and $D(\LConst(X))$ its derived category.  Let us consider the inclusion functor $i\colon D(\LConst(X))\to D(X)$ and the commutative diagram 
\[\xymatrix{D(\ZZ)\ar[rr]^{\pi^{-1}}\ar[rd]_{\pi^{-1}} & & D(\LConst(X))\ar[ld]_i \\ & D(X) &
}\] Homology of groups is nothing but the left adjoint of $\pi^{-1}\colon D(\ZZ)\to D(\LConst(X))$, taking into account  the equivalence $D(\LConst(X))=D(G)$, where $D(G)$ is the derived category of the category of $G$-modules. Homology of sheaves is, by definition, the left adjoint of  $\pi^{-1}\colon D(\ZZ)\to D(X)$. We shall see that $i$ has also a left adjoint $\LLc\colon D(X)\to D(\LConst(X))$. Hence, homology of sheaves is the composition of $\LLc$ with homology of groups.  We shall compute the functor $\LLc$ in terms of homology of sheaves in the universal covering.

\subsection{}
Let us begin with the underived version of the above situation. Let us denote   $\Mod( G )$ the category of (left) $\ZZ[G]$-modules, i.e., the category of $G$-modules. For any abelian group $A$, we denote by $A_{\rm tr}$ the trivial $G$-module (i.e., it is the $\ZZ$-module $A$ with the trivial action of $G$). For any $G$-module $M$, we denote by $M_G$  the quotient module of coinvariants:  $$M_G=\ZZ\otimes_{\ZZ[G]}M.$$  
Then we have functors
\[\aligned  \Tr\colon \Mod(\ZZ)&\to\Mod(G),\\ A&\mapsto A_\tr  \endaligned \qquad \aligned(\quad)_G\colon \Mod(G)&\to \Mod(\ZZ)\\ M&\mapsto M_G\endaligned\] and $(\quad)_G$ is left adjoint of $\Tr$. 

One has a well known equivalence between the category of locally constant sheaves on $X$ and the category of $G$-modules; for each locally constant sheaf $\M$ on $X$, we shall denote by $M$ the corresponding $G$-module and viceversa. Thus
\[\tag{*}\label{LC=G-mod}\aligned \LConst(X)&=\Mod( G )\\ \M&\leftrightarrow M\endaligned\] that interchanges constant sheaves with trivial $G$-modules: $A\leftrightarrow A_{\rm tr}$. The assignation $\M\mapsto M$ may be described in terms of  cosections in the following way: $M=\La(\wt X,\phi^{-1}\M)$, which is a $G$-module because $\phi^{-1}\M$ is a $G$-sheaf on $\wt X$.

 Via the equivalence \eqref{LC=G-mod}, the functor $\Tr$ coincides with $\pi^{-1}$ and then, by adjunction, $(\quad)_G$ coincides with $\La(X,\quad)$; that is
\[ \tag{1}\label{cos=coinv} \La(X,\M)=M_G.\]

Let us now consider the inclusion functor $\LConst(X)\hookrightarrow \Shv(X)$. 

\begin{prop} The inclusion functor $ \LConst(X)\hookrightarrow \Shv(X)$ has a left adjoint
$$\lc \colon \Shv(X)\to\LConst(X).$$  Thus, for any sheaf $F$  and any locally constant sheaf $\M$ one has
\[ \Hom (F,\M)=\Hom(\lc(F),\M).\]
\end{prop}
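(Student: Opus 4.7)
The plan is to exhibit the left adjoint explicitly in the spirit of the preceding discussion, using the universal cover $\phi\colon\wt X\to X$. Via the equivalence $\LConst(X)\cong\Mod(G)$ recalled above, under which a locally constant sheaf $\M$ corresponds to the $G$-module $M=\La(\wt X,\phi^{-1}\M)$, I would simply define
\[ \lc(F) := \La(\wt X, \phi^{-1}F), \]
viewed as an object of $\LConst(X)$ via the natural $G$-action on $\La(\wt X,\phi^{-1}F)$ inherited from the action of $G$ on $\wt X$. Note that if $F=\M$ is already locally constant, this reproduces the $G$-module $M$ assigned to $\M$, so $\lc\circ i$ will be the identity on $\LConst(X)$, a sanity check.

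To verify the adjunction, I would argue as follows. Since $\phi$ is a Galois covering with group $G$, a morphism $F\to\M$ in $\Shv(X)$ is equivalent to a $G$-equivariant morphism $\phi^{-1}F\to\phi^{-1}\M$ in $\Shv(\wt X)$. Because $\wt X$ is simply connected, every locally constant sheaf on it is constant, so $\phi^{-1}\M$ is the constant sheaf $\pi_{\wt X}^{-1}M$, with $G$-action coming from the given $G$-module structure on $M$. The adjunction of Section 1 applied on $\wt X$,
\[ \Hom_{\Shv(\wt X)}(\phi^{-1}F,\pi_{\wt X}^{-1}M)=\Hom_\ZZ(\La(\wt X,\phi^{-1}F),M), \]
is $G$-equivariant, so taking $G$-invariants yields
\[ \Hom_{\Shv(X)}(F,\M) = \Hom_{\Mod(G)}(\La(\wt X,\phi^{-1}F),M) = \Hom_{\LConst(X)}(\lc(F),\M), \]
which is the desired natural isomorphism.

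An alternative, more abstract approach would mirror the use of representability arguments elsewhere in the paper: the category $\LConst(X)\cong\Mod(G)$ is a Grothendieck abelian category, and the inclusion $i\colon\LConst(X)\hookrightarrow\Shv(X)$ preserves all small limits, since kernels and products of locally constant sheaves can be computed locally on simply connected open subsets, where they reduce to the analogous statements for constant sheaves (Proposition \ref{directprodconstant}); the Special Adjoint Functor Theorem then produces the left adjoint. The main technical point in the explicit approach is the bookkeeping of the $G$-action: one has to check that the adjunction of Section 1 on $\wt X$ is compatible with the $G$-actions on source and target, so that passing to $G$-invariants makes sense and recovers an adjunction between the categories of $G$-objects. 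This is routine but is precisely the work that the abstract SAFT route avoids.
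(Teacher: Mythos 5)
Your proof is correct, but your primary route differs from the paper's. The paper proves existence abstractly, exactly in the spirit of your ``alternative'' paragraph: since the inclusion $i\colon\LConst(X)\hookrightarrow\Shv(X)$ is exact, it suffices to check that it commutes with direct products, and this is done by restricting a product of locally constant sheaves to a simply connected open subset, where Proposition \ref{directprodconstant} shows a product of constant sheaves is constant; representability (the same adjoint-functor criterion used earlier for $\La(X,\quad)$) then yields $\lc$. Your main argument instead constructs $\lc(F)=\La(\wt X,\phi^{-1}F)$ explicitly as a $G$-module and verifies the adjunction by Galois descent: morphisms $F\to\M$ correspond to $G$-equivariant morphisms $\phi^{-1}F\to\phi^{-1}\M=\pi_{\wt X}^{-1}M$ (using that $\wt X$ is simply connected and locally connected, so $\phi^{-1}\M$ is constant), and then one applies the adjunction on $\wt X$ and passes to $G$-invariants, checking equivariance of the adjunction isomorphism. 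This is precisely the computation the paper performs immediately afterwards in Proposition \ref{lc=cosections}, but there it serves only to \emph{identify} the already-existing adjoint, whereas you use it to \emph{construct} it. The trade-off is as you say: your explicit route gives existence and the description $\lc(F)=\La(\wt X,\phi^{-1}F)$ in one stroke, at the cost of the equivariance bookkeeping and of invoking the universal cover and the Galois equivalence $\Shv(X)\simeq\GShv(\wt X)$ up front; the paper's (and your alternative) abstract route needs only exactness plus commutation with products, and in particular would work without the connectedness/local simple connectedness being used beyond what is needed to make $\LConst(X)$ manageable, deferring the concrete description to a separate statement. Both arguments are complete for the purposes of this proposition.
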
 
\begin{proof} Since the inclusion functor is exact, it suffices to see that it commutes with direct products. Indeed, if $\{\M_i\}_{i\in I}$ is a collection of locally constant sheaves, we have to see that the direct product sheaf $\prod_i\M_i$ is also locally constant. Restricting to a simply connected open subset, we are reduced to prove that the direct product of constant sheaves is constant. This is given by Proposition \ref{directprodconstant}.
\end{proof}

 
 For any sheaf $F$ one has a natural morphism  $F\to \lc(F)$, and for any locally constant sheaf $\M$ a natural isomorphism $\lc(\M)\to\M$.
It is direct from the representability that $\lc$ is right exact and commutes with direct sums (more generally, with direct limits). 

Via the equivalence $\LConst(X)=\Mod(G)$, we can see $\lc$ as a functor  $\lc\colon \Shv(X)\to \Mod(G)$, whose description in terms of the universal cover is given by the following:

\begin{prop}\label{lc=cosections} The $G$-module corresponding to $\lc(F)$  is $\La(\wt X,\phi^{-1}F)$ (which is a $G$-module because $\phi^{-1}F$ is a $G$-sheaf on $\wt X$). Thus, we shall put
\[ \lc(F)=\La(\wt X,\phi^{-1}F),\] via the equivalence \eqref{LC=G-mod}. In particular, for any simply connected open subset $U$ of $X$ one has
\[ \lc(\ZZ_U)=\ZZ[G]\] as $G$-modules.
\end{prop}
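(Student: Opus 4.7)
The plan is to establish the identification by showing that the $G$-module $M := \La(\wt X,\phi^{-1}F)$ represents the same functor on $\LConst(X) = \Mod(G)$ as $\lc(F)$, and then to read off the specific value on $\ZZ_U$ directly from the orbit description of $\phi^{-1}(U)$.

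First I would set up the equivariant picture. Since $\phi\colon \wt X\to X$ is a Galois cover with group $G$, pullback gives an equivalence $\phi^{-1}\colon \Shv(X)\overset\sim\to G\text{-}\Shv(\wt X)$ between sheaves on $X$ and $G$-equivariant sheaves on $\wt X$. Under this equivalence a locally constant sheaf $\M$ on $X$ goes to $\phi^{-1}\M$, which is a locally constant sheaf on $\wt X$; since $\wt X$ is simply connected it is constant, namely $\pi_{\wt X}^{-1}M$ with its canonical $G$-action, where $M$ is the $G$-module assigned to $\M$ via \eqref{LC=G-mod}. Next I would upgrade the adjunction $\La(\wt X,\quad)\dashv \pi_{\wt X}^{-1}$ to the $G$-equivariant setting: for any $G$-sheaf $\mathcal G$ on $\wt X$ the group $\La(\wt X,\mathcal G)$ inherits a canonical $G$-module structure, and for any $G$-module $N$ one has
\[\Hom_G(\La(\wt X,\mathcal G),N)=\Hom_{G\text{-}\Shv(\wt X)}(\mathcal G,\pi_{\wt X}^{-1}N),\]
which follows formally from Theorem~1.2 by taking $G$-invariants of both sides.

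Combining these two ingredients, for any sheaf $F$ on $X$ and any locally constant sheaf $\M$ on $X$ with associated $G$-module $M$, I would chain the equalities
\[\Hom_{\Shv(X)}(F,\M)=\Hom_{G\text{-}\Shv(\wt X)}(\phi^{-1}F,\pi_{\wt X}^{-1}M)=\Hom_G(\La(\wt X,\phi^{-1}F),M).\]
Since $\lc(F)$ represents the functor $\M\mapsto \Hom_{\Shv(X)}(F,\M)$ on $\LConst(X)=\Mod(G)$, Yoneda gives a canonical $G$-module isomorphism $\lc(F)\simeq \La(\wt X,\phi^{-1}F)$.

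For the particular case $F=\ZZ_U$ with $U$ simply connected, I would argue directly. Then $\phi^{-1}(U)$ is trivialized by the cover, so it splits as $\bigsqcup_{g\in G}U_g$ with each $U_g\overset{\phi}\to U$ a homeomorphism and $G$ permuting the sheets freely by left translation. Hence $\phi^{-1}(\ZZ_U)=\ZZ_{\phi^{-1}(U)}=\bigoplus_{g\in G}\ZZ_{U_g}$, and since $\La(\wt X,\quad)$ commutes with direct sums and $\La(\wt X,\ZZ_{U_g})=\La(U_g,\ZZ)=\ZZ$ (as $U_g$ is connected), I get $\La(\wt X,\phi^{-1}\ZZ_U)=\bigoplus_{g\in G}\ZZ=\ZZ[G]$ with $G$ acting by left translation on the basis, i.e. the free $G$-module of rank one.

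The main obstacle I expect is a bookkeeping one rather than a conceptual one: verifying that the $G$-action on $\La(\wt X,\phi^{-1}F)$ induced by the equivariant structure on $\phi^{-1}F$ genuinely matches the $G$-action that comes out of the equivalence $\LConst(X)=\Mod(G)$ described just before the proposition (where $M=\La(\wt X,\phi^{-1}\M)$ was used as the definition). Once that compatibility is nailed down, both steps above are formal consequences of representability and of the universal-cover equivalence.
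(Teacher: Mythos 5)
Your proposal is correct and follows essentially the same route as the paper: the Galois equivalence $\Shv(X)\simeq \GShv(\wt X)$, the equivariant adjunction between $\La(\wt X,\quad)$ and $\pi_{\wt X}^{-1}$ on $G$-sheaves and $G$-modules, a Yoneda/representability argument identifying $\lc(F)$ with $\La(\wt X,\phi^{-1}F)$, and the sheet decomposition $\phi^{-1}(U)\simeq U\times G$ to get $\lc(\ZZ_U)=\ZZ[G]$. The action-compatibility issue you flag is settled by the paper's own description of the equivalence \eqref{LC=G-mod}, which assigns to $\M$ precisely the $G$-module $\La(\wt X,\phi^{-1}\M)$, so no extra bookkeeping is needed.
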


\begin{proof} This is essentially a consequence of the Galois correspondence between sheaves on $X$ and $G$-sheaves on $\wt X$, that we recall now.  If $F$ is a sheaf on $X$, then $\phi^{-1}F$ is a $G$-sheaf on $\wt X$, since its \'etale space  $\wt{\phi^{-1}F}=\wt X\times_X\wt F$ is endowed with the action of $G$ given by the action on $\wt X$ and the trivial action on $\wt F$. One has then a Galois equivalence
\[ \aligned \Shv(X)&\to  \GShv(\wt X)\\ F&\mapsto \phi^{-1}F\endaligned\] whose inverse sends a $G$-sheaf $\Q$ on $\wt X$ to  the sheaf of sections of $\wt F:=\wt Q/G\to \wt X/G=X$.

If $M$ is a $G$-module, then $\pi_{\wt X}^{-1}M$ is a (constant) $G$-sheaf on $\wt X$ and, for any $G$-sheaf $\Q$ on $\wt X$, $\La(\wt X,\Q)$ is a $G$-module. The functor $M\mapsto \pi_{\wt X}^{-1}M$ is right adjoint of the functor $\Q\mapsto \La(\wt X,\Q)$.

Now, let us prove the proposition. Let us denote $M_F=\La(\wt X,\phi^{-1}F)$. For any locally constant sheaf $\M$ on $X$ (with corresponding $G$-module $M$) one has
\[\Hom_{\LConst(X)} ( \lc(F),\M)=\Hom_{\Shv(X)}( F,\M)=\Hom_{\GShv(\wt X)}( \phi^{-1}F ,\phi^{-1}\M).\]
Now, $\phi^{-1}\M$ is a constant $G$-sheaf on $\wt X$, hence $\phi^{-1}\M=\pi_{\wt X}^{-1}M$. 
Then
\[ \Hom_{\GShv(\wt X)}(\phi^{-1}F,\phi^{-1}\M)= \Hom_{\text{$G$-mod}}(M_F,M)  \] and we conclude that $M_F$ is the $G$-module corresponding to $\lc(F)$.

If $U$ is a simply connected open subset, then $\phi^{-1}(U)=U\times G$, and then
\[\La(\wt X,\phi^{-1}\ZZ_{U})=\La(\wt X,\ZZ_{\phi^{-1}(U)})=\La(U\times G,\ZZ)=\ZZ[G].\]
\end{proof}

\begin{coro} For any sheaf $F$ on $X$ one has:
\[ \La(X,F)=\La(X,\lc(F) )= \La(\wt X,\phi^{-1}F)_G.\] That is, the cosections of a sheaf $F$ are the coinvariants of the cosections of the lifting of $F$ to the universal covering.
\end{coro}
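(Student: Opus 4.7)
The plan is to chain together the two adjunctions already established in this section: the one between $\La(X,\quad)$ and $\pi_X^{-1}$ (restricted to locally constant sheaves, this gives the identification \eqref{cos=coinv}), and the one between $\lc$ and the inclusion $i\colon \LConst(X)\hookrightarrow \Shv(X)$. Proposition \ref{lc=cosections} then supplies the concrete identification of $\lc(F)$ as a $G$-module.

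For the first equality, I would argue by Yoneda in $\Mod(\ZZ)$. For any abelian group $A$, the constant sheaf $\pi_X^{-1}A$ is locally constant, so combining the two adjunctions gives
\[
\Hom_\ZZ(\La(X,F),A)=\Hom_{\Shv(X)}(F,\pi_X^{-1}A)=\Hom_{\LConst(X)}(\lc(F),\pi_X^{-1}A)=\Hom_\ZZ(\La(X,\lc(F)),A),
\]
where in the last step I apply $\La(X,\quad)\dashv \pi_X^{-1}$ again, this time to the locally constant sheaf $\lc(F)$. Being natural in $A$, this yields the isomorphism $\La(X,F)\simeq \La(X,\lc(F))$.

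For the second equality I would invoke the equivalence $\LConst(X)=\Mod(G)$ set up in \eqref{LC=G-mod}. Under this equivalence, \eqref{cos=coinv} says that $\La(X,\M)=M_G$ for any locally constant $\M$ with corresponding $G$-module $M$. By Proposition \ref{lc=cosections}, the $G$-module corresponding to $\lc(F)$ is precisely $\La(\wt X,\phi^{-1}F)$. Substituting $\M=\lc(F)$ and $M=\La(\wt X,\phi^{-1}F)$ into \eqref{cos=coinv} yields
\[
\La(X,\lc(F))=\La(\wt X,\phi^{-1}F)_G.
\]

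There is no real obstacle here: the content has already been packaged in the preceding proposition and in \eqref{cos=coinv}. The only thing to double-check is that the chain of identifications above is natural enough for Yoneda to apply, which is automatic since every step is an adjunction isomorphism.
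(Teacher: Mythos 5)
Your proposal is correct and is essentially the paper's argument: the paper obtains the first equality by taking left adjoints in the commutative triangle $i\circ\pi^{-1}=\pi^{-1}$, which is exactly what your Yoneda chain of Hom-isomorphisms spells out (the only implicit ingredient, full faithfulness of $\LConst(X)\hookrightarrow\Shv(X)$ at the underived level, is available), and the second equality is derived in both cases from Proposition \ref{lc=cosections} together with formula \eqref{cos=coinv}.
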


\begin{proof} The equality  $\La(X,F)=\La(X,\lc(F) ) $ is obtained by adjunction from the commutative diagram
\[\xymatrix{\Mod(\ZZ)\ar[rr]^{\pi^{-1}}\ar[rd]_{\pi^{-1}} &  & \Shv(X)\\ & \LConst(X)\ar[ru]_i &
}\] The second equality follows from Proposition \ref{lc=cosections} and formula \eqref{cos=coinv}.
\end{proof}

\subsection{} Let us now give the derived version of the relation between homology of sheaves and   homology of groups. The equivalence $\LConst(X)=\Mod(G)$ gives an equivalence $$D(\LConst(X))= D(G).$$  Thus, for any $\M\in  D(\LConst(X))$, we shall denote by $M$ its corresponding object in $D(G)$ and viceversa.

Let $$\ZZ\overset\LL{\underset{\ZZ[G]}\otimes} \underline\quad\colon D(G)\to D(\ZZ)$$ be the left derived functor of   $(\quad)_G\colon \Mod(G)\to\Mod(\ZZ)$. We shall use the standard notation
\[ H_i(G,M):=H_i[\ZZ\overset\LL{\underset{\ZZ[G]}\otimes}  M]\] for any $M\in D(G)$. The functor $\ZZ\overset\LL{\underset{\ZZ[G]}\otimes} \underline\quad$ is left adjoint of the functor
\[ \Tr\colon D(\ZZ)\to D(G)\] induced by the exact functor $\Tr\colon\Mod(\ZZ)\to\Mod(G)$.

In other words, the functor $\La(X,\quad)\colon \LConst(X)\to\Mod (\ZZ)$, $\M\mapsto \La(X,\M)$ can be derived by the left (because $\LConst(X)$ has enough projectives), giving a functor
\[ \LL^\text{lc}(X,\quad)\colon D(\LConst(X))\to D(\ZZ)\] which is left adjoint of the functor $\pi^{-1}\colon D(\ZZ)\to D(\LConst(X))$. We shall denote $$H_i^\text{lc}(X,\M):=H_i[\LL^\text{lc}(X,\M)].$$  Via the equivalence $D(\LConst(X))=D(G)$, one has
\[ \aligned \LL^\text{lc}(X,\M)&= \ZZ\overset\LL{\underset{\ZZ[G]}\otimes}M \\ H_i^\text{lc}(X,\M)&=H_i(G,M).\endaligned \]

Now let us consider the inclusion functor $i\colon D(\LConst(X))\to D(X)$. Notice that though $\LConst(X)\to \Shv(X)$ is fully faithful, $i$ is not (in fact, it is proved in \cite{SanchoTorres} that $i$ is fully faithful if and only if $X$ is aspherical). We cannot derive by the left the functor $\lc\colon \Shv(X)\to\LConst(X)$, since $\Shv(X)$ has not enough projectives. However, we can proceed as we did to construct homology:

\begin{prop} The functor $i\colon D(\LConst(X))\to D(X)$ has a left adjoint
\[ \LLc\colon D(X)\to D(\LConst(X)).\] Thus, for any $F\in D(X),\M\in D(\LConst(X))$ one has
\[ \Hom_{D(X)}(F,i(\M))=\Hom_{D(\LConst(X))}(\LLc(F),\M).\]
\end{prop}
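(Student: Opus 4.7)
The plan is to apply Brown representability for the dual, exactly as in Theorem \ref{derivedproduct}. Under the equivalence $\LConst(X)\simeq\Mod(G)$, the derived category $D(\LConst(X))$ is identified with $D(G)$, which is compactly generated (for instance, by $\ZZ[G]$) and hence well-generated in Neeman's sense, so Brown representability for the dual is applicable. Moreover, $i$ is a triangulated functor, being induced by the exact embedding $\LConst(X)\hookrightarrow\Shv(X)$. Thus the existence of a left adjoint $\LLc$ reduces to showing that $i$ commutes with arbitrary direct products.

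To verify this, let $\{\M_i\}_{i\in I}$ be a family of objects in $D(\LConst(X))$, and pick K-injective resolutions $\M_i\to I_i$ in the category of complexes of locally constant sheaves (these exist since $\LConst(X)\simeq\Mod(G)$ is a Grothendieck abelian category). Then $\prod_i^D\M_i$ in $D(\LConst(X))$ is represented by the complex-level product $\prod_iI_i$. By Proposition \ref{directprodconstant}, the direct product of locally constant sheaves, computed in $\Shv(X)$, is again locally constant; in particular, $\prod_iI_i$ coincides with the naive product in $\Shv(X)$ and remains a complex of locally constant sheaves. What remains is to show that this complex $\prod_iI_i$ also represents $\prod_i^{D}i(\M_i)$ in $D(X)$; equivalently, if $I_i\to J_i$ are K-injective resolutions in $\Shv(X)$, that the induced morphism $\prod_iI_i\to\prod_iJ_i$ is a quasi-isomorphism in $\Shv(X)$.

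This last point is local on $X$. Since $X$ is locally simply connected (and, by the standing hypothesis, locally cohomologically trivial), it admits a basis of open subsets $U$ that are simultaneously simply connected, connected, and cohomologically trivial. On any such $U$, each restriction $\M_i\vert U$ becomes a complex of constant sheaves, so the situation becomes formally identical to that treated in Theorem \ref{derivedproduct}: the argument there (comparing $\prod_i\Gamma(U,\pi^{-1}G_i)$ with $\prod_i\Gamma(U,I_i)$ via the acyclicity supplied by cohomological triviality of $U$) applies verbatim and yields the desired quasi-isomorphism on $U$, hence everywhere.

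The main obstacle is the technical verification in the last step: one must ensure that K-injective resolutions taken inside $\LConst(X)$ are still ``acyclic enough'' after embedding into $\Shv(X)$ to compute the derived product there. This is precisely where the local hypotheses on $X$ enter, and it is handled by mimicking the reduction of Theorem \ref{derivedproduct} (together with Remark \ref{loc-compact} in the locally compact Hausdorff case), rather than by any new ingredient. Once this is in place, Brown representability produces the adjoint $\LLc\colon D(X)\to D(\LConst(X))$ and the adjunction formula follows automatically.
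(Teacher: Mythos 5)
Your strategy is the same as the paper's: by Brown representability for the dual (legitimate, since $D(\LConst(X))\simeq D(\ZZ[G])$ is compactly generated), everything reduces to showing that $i$ preserves direct products, and this is checked locally via Theorem \ref{derivedproduct}. However, two points in your execution need repair. First, the claim that the argument of Theorem \ref{derivedproduct} applies ``verbatim'' on a small open $U$ overstates things: that proof starts by splitting each object of $D(\ZZ)$ as $\prod_n H^n(G)[-n]$, reducing to single abelian groups, and no such splitting is available in $D(\ZZ[G])$. After restricting to $U$, your complexes $I_{i\vert U}\simeq \pi_U^{-1}E_i$ are genuinely unbounded complexes of constant sheaves, and what you must prove is that $E\to \RR\Gamma(U,\pi_U^{-1}E)$ is an isomorphism for every $E\in D(\ZZ)$, not merely for abelian groups; cohomological triviality of $U$ does not give this for unbounded complexes without a further argument. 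It is true, and provable with the paper's tools: for instance $\RR\Gamma(U,\pi_U^{-1}E)=\RR\Hom^\pun(\LL(U,\ZZ),E)$ by Theorem \ref{homology-cohomology} together with Proposition \ref{inmersiones}, and $\LL(U,\ZZ)=\ZZ$ for $U$ connected and cohomologically trivial by Corollary \ref{coh-trivial}; alternatively, split $E$ in $D(\ZZ)$ and invoke Theorem \ref{derivedproduct} for the space $U$ plus the fact that $\RR\Gamma(U,\quad)$ commutes with derived products. This extra step is exactly what your ``verbatim'' hides.

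Second, you require a basis of open subsets that are simultaneously simply connected, connected and cohomologically trivial; the standing hypotheses (locally simply connected, locally cohomologically trivial) provide each type of neighbourhood separately but do not formally yield opens enjoying both properties at once. The paper's proof sidesteps this: it restricts to a simply connected open $U$ (such opens cover $X$, which suffices to test an isomorphism in $D(X)$, since $j^{-1}$ preserves K-injectives and hence derived products), notes that $\pi_U^{-1}\colon D(\ZZ)\to D(\LConst(U))$ is an equivalence, so over $U$ the functor $i$ becomes $\pi_U^{-1}\colon D(\ZZ)\to D(U)$, and then applies Theorem \ref{derivedproduct} to the space $U$ itself, which inherits local connectivity and local cohomological triviality; the cohomologically trivial opens are only used inside $U$, where they do form a basis. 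If you reorganize your local step in this way, your K-injective bookkeeping goes through and your proof coincides with the paper's.
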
 

\begin{proof} By Brown representability for the dual, we have to prove that $i$ commutes with direct products. 
Restricting to a simply connected open subset $U$, and taking into account that $\pi_U^{-1}\colon D(\ZZ)\to D(\LConst(U))$ is an equivalence, we conclude by Theorem \ref{derivedproduct}.
\end{proof}

We shall denote: $ \LLc_i(F)=H_i[\LLc(F)].$  By adjunction, $\LLc$ is exact and commutes with direct sums.
For any $F\in D(X)$, $\M\in D(\LConst(X))$ one has an isomorphism
\[ \RR\Hom^\pun (F,i(\M))=\RR\Hom^\pun(\LLc(F),\M).\]
Then, if $F$ is a sheaf, $ \LLc_i(F)=0$ for $i<0$ and $ \LLc_0(F)=\lc(F)$.
For any $F\in D(X)$ one has a morphism $F\to i(\LLc(F))$ and for any $\M\in D(\LConst(X))$ a morphism $\LLc(i(\M))\to\M$ (which is not an isomorphism in general, in contrast with the underived case).  The diagram 
\[\xymatrix{ D(\LConst(X))\ar[rr]^i \ar[rd]_{\LL^\text{lc}(X,\quad)}  & & D(X)\ar[ld]^{\LL(X,\quad)}  \\  & D(\ZZ) &  }\]
is not commutative, but there is a natural morphism $\LL(X,\quad)\circ i\to \LL^\text{lc}(X,\quad)$. Hence,  for any $\M\in D(\LConst(X))$, one has a natural morphism
\[ H_i(X,\M)\to H_i^\text{lc}(X,\M)=H_i(G,M)\] which is not an isomorphism in general (one can prove that it is an isomorphism if $X$ is aspherical, see \cite{SanchoTorres} for the cohomological analog).

On the other hand, one has:
\begin{thm}\label{hsheaves-hgroups} For any $F\in D(X)$:
\[   \LL(X,F)= \LL^{\text{\rm lc}}(X,  \LLc(F)),\qquad \text{and then } H_i(X,F)=H_i^\text{\rm lc}(X,\LLc(F)).\quad\] Equivalently (viewing $\LLc$ as a functor in $D(G)$)
\[   \LL(X,F)= \ZZ\overset\LL\otimes_{\ZZ[G]} \LLc(F),  \qquad \text{and then }  H_i(X,F)=H_i (G,\LLc(F)).\quad \]
\end{thm}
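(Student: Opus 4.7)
The plan is to exploit uniqueness of adjoints. The key observation is that the displayed commutative triangle of right adjoints
\[ \pi^{-1}_{D(\ZZ)\to D(X)} \;=\; i\circ \pi^{-1}_{D(\ZZ)\to D(\LConst(X))} \]
has already been built into the setup (the constant sheaf associated to an abelian group is locally constant, so the inverse image on $D(X)$ factors through $D(\LConst(X))$). Each of the three arrows admits a left adjoint by results established earlier: $\LL(X,\quad)$ by Theorem \ref{derivedproduct}, $\LL^{\text{lc}}(X,\quad)$ because $\LConst(X)\simeq\Mod(G)$ has enough projectives, and $\LLc$ by the previous proposition. Taking left adjoints of a commuting triangle of right adjoints produces the commuting triangle
\[ \LL(X,\quad) \;\simeq\; \LL^{\text{lc}}(X,\quad)\circ \LLc, \]
which is exactly the first assertion.

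Concretely, I would verify this by the chain of adjunction isomorphisms: for any $F\in D(X)$ and any $N\in D(\ZZ)$,
\[
\aligned
\Hom_{D(\ZZ)}(\LL(X,F),N) &= \Hom_{D(X)}(F,\pi^{-1}N) \\
&= \Hom_{D(X)}(F, i(\pi^{-1}N)) \\
&= \Hom_{D(\LConst(X))}(\LLc(F),\pi^{-1}N) \\
&= \Hom_{D(\ZZ)}(\LL^{\text{lc}}(X,\LLc(F)),N),
\endaligned
\]
where the middle equality uses that $\pi^{-1}N$ is locally constant so that its image under $i$ is literally itself. By Yoneda this gives the natural isomorphism $\LL(X,F)\simeq \LL^{\text{lc}}(X,\LLc(F))$, and passing to $H_i$ yields the homology statement.

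The equivalent formulation in terms of $D(G)$ is immediate from the identification $\LL^{\text{lc}}(X,\M)=\ZZ\overset\LL\otimes_{\ZZ[G]} M$ already recorded (the derived version of $\La(X,\M)=M_G$, via $D(\LConst(X))\simeq D(G)$). Taking $H_i$ of the tensor product gives $H_i(G,\LLc(F))$ by definition.

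There is no real obstacle here; the argument is purely formal from uniqueness of adjoints once all three adjoints are known to exist. The only subtlety worth flagging is that the inclusion $i\colon D(\LConst(X))\to D(X)$ is not fully faithful in general (as noted in the paragraph preceding the theorem), so the unit $\M\to i\LLc(\M)$ need not be an isomorphism. This however plays no role in the argument: the proof only uses the adjunction isomorphism, not full faithfulness, and applies to every $F\in D(X)$.
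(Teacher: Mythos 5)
Your argument is correct and is essentially the paper's own proof: the paper likewise deduces the identity by taking left adjoints across the commutative triangle $\pi^{-1}=i\circ\pi^{-1}$, and the $D(G)$ reformulation follows from the previously recorded identification $\LL^{\text{lc}}(X,\M)=\ZZ\overset\LL\otimes_{\ZZ[G]}M$. Your explicit chain of adjunction isomorphisms just spells out what the paper leaves implicit.
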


\begin{proof} The result follows from adjunction from 
 the commutativity of the diagram $$\xymatrix{D(\ZZ)\ar[rr]^{\pi^{-1}} \ar[rd]_{\pi^{-1}} & & D(\LConst(X))\ar[ld]_i \\ & D(X) &}.$$  
\end{proof} 

Let us see how to compute $\LLc(F)$ in terms of homology of sheaves in the universal cover, i.e., let us give the derived analog of Proposition \ref{lc=cosections}. First, let us see that $\LLc$ can be computed with (a slight variation of) the standard resolution.

For any complex of sheaves $F$ on $X$, we denote $\lc(F)$ the complex of locally constant sheaves 
\[ \cdots \to \lc(F^n)\to \lc(F^{n+1})\to \cdots\] and one has a morphism of complexes $F\to \lc(F)$ and then a morphism in the derived category $\LLc(F)\to \lc(F)$. We say that $F$ is {\em $\lc$-acyclic} if this morphism is an isomorphism. If $F$ is a sheaf, this is equivalent to say that $\LLc_i(F)=0$ for $i>0$. For any  simply connected open subset $U$, the sheaf $\ZZ_U$ is $\lc$-acyclic, because
\[  \RR\Hom^\pun(\LLc(\ZZ_U),\M)  =\RR\Hom^\pun(\ZZ_U,\M)=\RR\Gamma(U,\M)=\Gamma(U,\M)\] for any locally constant sheaf $\M$. Thus, for any sheaf $F$, let us define
\[ \C_0'F:=\underset{U}\oplus \ZZ_U^{\oplus F(U)}\] where $U$ runs over the set of simply connected open subsets of $X$. One has an epimorphism $\C_0'F\to F$ and $\C_0'F$ is $\lc$-acyclic. Repeating the same construction than for the standard resolution (see \ref{standard-res}), for any complex $F$ one constructs a resolution $\C'_\pun F\to F$, with $\C'_\pun F$ $\lc$-acyclic, and then  
\[ \LLc(F)\simeq \lc(\C'_\pun F).\] By Proposition \ref{lc=cosections},  the complex of $G$-modules corresponding to $\LLc(F)$ by the equivalence $D(\LConst(X))=D(G)$ is $\La(\wt X, \phi^{-1}\C'_\pun F)$. Now, $\phi^{-1}\C_\pun F$ is a resolution of $\phi^{-1}F$ and it is $\La$-acyclic; indeed, from the construction of $\C'_\pun F$, one is reduced to prove that $\phi^{-1}\ZZ_U$ is $\La$-acyclic for any simply connected open subset $U$. But $\phi^{-1}\ZZ_U=\ZZ_{\phi^{-1}(U)}$ and $\phi^{-1}(U)=U\times G$, so $\ZZ_{\phi^{-1}(U)}=\oplus_{g\in G}\ZZ_{U}$, which is a direct sum of $\La$-acyclic sheaves, hence it is $\La$-acyclic. 

Finally, notice that for any simply connected open subset we have proved $\LLc(\ZZ_U)=\lc(\ZZ_U)\overset{\ref{lc=cosections}}=\ZZ[G]$, which is a projective $G$-module, hence $(\quad)_G$-acyclic.

We have proved then:

\begin{prop} One has 
\[ \LLc_i(F)=H_i(\wt X,\phi^{-1}F)\] as $G$-modules.
Combining with Theorem \ref{hsheaves-hgroups}, one obtains an spectral sequence
\[ H_p (G,H_q(\wt X,\phi^{-1}F))\Rightarrow H_{p+q}(X,F).\]
\end{prop}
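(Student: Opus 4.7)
The first statement is essentially a $G$-equivariant translation of the discussion preceding it. The plan is to compute $\LLc(F)$ using the $\lc$-acyclic resolution $\C'_\pun F\to F$ just constructed, giving $\LLc(F)\simeq \lc(\C'_\pun F)$ in $D(\LConst(X))$. Under the equivalence $D(\LConst(X))\simeq D(G)$, Proposition \ref{lc=cosections} identifies $\lc(\C'_\pun F)$ with the complex of $G$-modules $\La(\wt X,\phi^{-1}\C'_\pun F)$, the $G$-action coming from the fact that $\phi^{-1}F$ is naturally a $G$-sheaf on $\wt X$ and $\La(\wt X,\quad)$ of a $G$-sheaf carries an induced $G$-module structure.

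Next I would verify that $\phi^{-1}\C'_\pun F\to\phi^{-1}F$ is an $\La$-acyclic resolution on $\wt X$. Exactness is preserved because $\phi^{-1}$ is exact, so the remaining point is $\La$-acyclicity of each term $\phi^{-1}\ZZ_U=\ZZ_{\phi^{-1}(U)}$, with $U\subset X$ drawn from a basis of opens that are simultaneously simply connected and cohomologically trivial (available under the standing local hypotheses). Since $U$ is simply connected, $\phi^{-1}(U)\simeq U\times G$ as a cover of $U$, so $\ZZ_{\phi^{-1}(U)}\simeq \bigoplus_{g\in G}\ZZ_{U_g}$ in $\Shv(\wt X)$, where each $U_g\subset\wt X$ is homeomorphic to $U$ and hence cohomologically trivial. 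By Proposition \ref{basic-L-acyclic} each summand is $\La$-acyclic on $\wt X$, and $\La$-acyclicity is preserved by direct sums since $\LL(\wt X,\quad)$ commutes with them. Therefore $\LL(\wt X,\phi^{-1}F)\simeq \La(\wt X,\phi^{-1}\C'_\pun F)$ as complexes of $G$-modules, which combined with the previous paragraph gives $\LLc(F)\simeq \LL(\wt X,\phi^{-1}F)$ in $D(G)$; taking $H_i$ yields the first assertion.

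For the spectral sequence, the plan is to combine this identification with Theorem \ref{hsheaves-hgroups}, which gives $\LL(X,F)\simeq \ZZ\overset\LL\otimes_{\ZZ[G]}\LLc(F)$. The right-hand side is the derived tensor product over $\ZZ[G]$ of the complex of $G$-modules $K:=\LLc(F)\simeq \LL(\wt X,\phi^{-1}F)$, so the standard hyper-Tor (Grothendieck) spectral sequence yields $E^2_{p,q}=H_p(G,H_q(K))\Rightarrow H_{p+q}(\ZZ\overset\LL\otimes_{\ZZ[G]}K)$. Substituting $H_q(K)=H_q(\wt X,\phi^{-1}F)$ on the $E^2$-page and $H_{p+q}(\ZZ\overset\LL\otimes_{\ZZ[G]}K)=H_{p+q}(X,F)$ on the abutment produces the stated spectral sequence.

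The only mildly delicate step is the $\La$-acyclicity check on $\wt X$, which reduces cleanly to Proposition \ref{basic-L-acyclic} together with the trivialization $\phi^{-1}(U)\simeq U\times G$ over simply connected $U$; the hyper-Tor spectral sequence itself is entirely standard, and the rest is essentially bookkeeping between the equivalences $\LConst(X)\simeq\Mod(G)$ and $D(\LConst(X))\simeq D(G)$.
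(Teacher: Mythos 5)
Your proposal is correct and follows essentially the same route as the paper: compute $\LLc(F)$ via the $\lc$-acyclic resolution $\C'_\pun F$, identify $\lc(\C'_\pun F)$ with the complex of $G$-modules $\La(\wt X,\phi^{-1}\C'_\pun F)$ via Proposition \ref{lc=cosections}, observe that $\phi^{-1}\C'_\pun F$ is an $\La$-acyclic resolution of $\phi^{-1}F$ because $\phi^{-1}\ZZ_U=\ZZ_{\phi^{-1}(U)}=\oplus_{g\in G}\ZZ_{U_g}$ with each $U_g$ a homeomorphic copy of $U$ (Proposition \ref{basic-L-acyclic} plus commutation with direct sums), and then feed $\LLc(F)\simeq\LL(\wt X,\phi^{-1}F)$ into Theorem \ref{hsheaves-hgroups} and the standard hyper-Tor spectral sequence. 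Your only deviation is to build the resolution from opens that are simply connected \emph{and} cohomologically trivial, which is a mild sharpening of the same step (the paper uses simply connected opens and leaves that acyclicity implicit).
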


%

\end{document}